\documentclass[12pt,dvipsnames]{amsart}

\usepackage{amssymb,mathdots,mathtools,nccmath}

\usepackage[T1]{fontenc}
\usepackage[utf8]{inputenc}

\usepackage[aligntableaux = center]{ytableau}
\usepackage{graphicx}
\usepackage[margin=2.5cm]{geometry}
\usepackage{enumitem}
\usepackage{xcolor}
\usepackage{booktabs}
\usepackage{makecell}
\usepackage{nicematrix}
\usepackage[normalem]{ulem}

\usepackage{tikz}
\usetikzlibrary{arrows,decorations.pathreplacing}
\newcommand{\midarrow}{\tikz \draw[-triangle 90] (0,0) -- +(.1,0);}

\newcommand\myatop[2]{\genfrac{}{}{0pt}{}{#1}{#2}}

\newcommand{\myscale}{0.75}

\newcommand{\half}{1\mkern-2mu/\mkern-2mu2}

\usepackage[sc]{mathpazo}

\usepackage{subcaption}
\captionsetup[subfigure]{labelfont=rm,labelformat=simple}

\newtheorem{thm}{Theorem}[section]
\newtheorem{prop}[thm]{Proposition}
\newtheorem{cor}[thm]{Corollary}
\newtheorem{lem}[thm]{Lemma}
\newtheorem{conj}[thm]{Conjecture}
\newtheorem{prob}[thm]{Problem}
\newtheorem{dfn}[thm]{Definition}
\newtheorem{exm}[thm]{Example}

\raggedbottom

\newcommand{\nenwarrow}{\nwarrow \!\!\!\!\!\;\!\! \nearrow}

\newcommand{\deff}[1]{\emph{#1}}

\newcommand{\DSASMSym}{%
	\tikz[scale=0.12]{%
		\draw (0,0) -- (2,0) -- (2,-2) -- cycle;
	}%
}

\DeclareMathOperator{\sgn}{sgn}
\DeclareMathOperator{\asym}{\mathbf{ASym}}
\DeclareMathOperator{\DSASM}{DSASM}
\DeclareMathOperator{\SV}{6V_{\DSASMSym}}
\DeclareMathOperator*{\pf}{Pf}

\renewcommand{\le}{\leqslant}
\renewcommand{\ge}{\geqslant}

\newcommand{\imag}{\mathfrak{i}}
\newcommand{\F}{\mathsf{F}}

\newcommand{\boldk}{\boldsymbol{k}}


\newcommand{\wone}{\scalebox{.6}{\begin{tikzpicture}[scale=.65,every node/.style={sloped,allow upside down}]
			\node at (0,0) {$\bullet$};
			\draw[very thick] (0,0) --node{\midarrow} (0,1);
			\draw[very thick] (0,0) --node{\midarrow} (0,-1);
			\draw[very thick] (-1,0) --node{\midarrow} (0,0);
			\draw[very thick] (1,0) --node{\midarrow} (0,0);
\end{tikzpicture}}}

\newcommand{\wmone}{\scalebox{.6}{\begin{tikzpicture}[scale=.65,every node/.style={sloped,allow upside down}]
			\node at (0,0) {$\bullet$};
			\draw[very thick] (0,1) --node{\midarrow} (0,0);
			\draw[very thick] (0,-1) --node{\midarrow} (0,0);
			\draw[very thick] (0,0) --node{\midarrow} (-1,0);
			\draw[very thick] (0,0) --node{\midarrow} (1,0);
\end{tikzpicture}}}

\newcommand{\wne}{\scalebox{.6}{\begin{tikzpicture}[scale=.65,every node/.style={sloped,allow upside down}]
			\node at (0,0) {$\bullet$};
			\draw[very thick] (0,0) --node{\midarrow} (0,1);
			\draw[very thick] (0,-1) --node{\midarrow} (0,0);
			\draw[very thick] (-1,0) --node{\midarrow} (0,0);
			\draw[very thick] (0,0) --node{\midarrow} (1,0);
\end{tikzpicture}}}

\newcommand{\wneC}{\scalebox{.6}{\begin{tikzpicture}[scale=.65,every node/.style={sloped,allow upside down}]
			\draw[very thick] (0,0) --node{\midarrow} (0,1);
			\draw[very thick] (0,-1) --node{\midarrow} (0,0);
			\draw[very thick] (-1,0) --node{\midarrow} (0,0);
			\draw[very thick] (0,0) --node{\midarrow} (1,0);
			\node at (0,0) {\color{cyan}{$\bullet$}};
\end{tikzpicture}}}

\newcommand{\wse}{\scalebox{.6}{\begin{tikzpicture}[scale=.65,every node/.style={sloped,allow upside down}]
			\node at (0,0) {$\bullet$};
			\draw[very thick] (0,1) --node{\midarrow} (0,0);
			\draw[very thick] (0,0) --node{\midarrow} (0,-1);
			\draw[very thick] (-1,0) --node{\midarrow} (0,0);
			\draw[very thick] (0,0) --node{\midarrow} (1,0);
\end{tikzpicture}}}

\newcommand{\wseC}{\scalebox{.6}{\begin{tikzpicture}[scale=.65,every node/.style={sloped,allow upside down}]
			\draw[very thick] (0,1) --node{\midarrow} (0,0);
			\draw[very thick] (0,0) --node{\midarrow} (0,-1);
			\draw[very thick] (-1,0) --node{\midarrow} (0,0);
			\draw[very thick] (0,0) --node{\midarrow} (1,0);
			\node at (0,0) {\color{teal}{$\bullet$}};
\end{tikzpicture}}}

\newcommand{\wsw}{\scalebox{.6}{\begin{tikzpicture}[scale=.65,every node/.style={sloped,allow upside down}]
			\node at (0,0) {$\bullet$};
			\draw[very thick] (0,1) --node{\midarrow} (0,0);
			\draw[very thick] (0,0) --node{\midarrow} (0,-1);
			\draw[very thick] (0,0) --node{\midarrow} (-1,0);
			\draw[very thick] (1,0) --node{\midarrow} (0,0);
\end{tikzpicture}}}

\newcommand{\wswC}{\scalebox{.6}{\begin{tikzpicture}[scale=.65,every node/.style={sloped,allow upside down}]
			\draw[very thick] (0,1) --node{\midarrow} (0,0);
			\draw[very thick] (0,0) --node{\midarrow} (0,-1);
			\draw[very thick] (0,0) --node{\midarrow} (-1,0);
			\draw[very thick] (1,0) --node{\midarrow} (0,0);
			\node at (0,0) {\color{cyan}{$\bullet$}};
\end{tikzpicture}}}

\newcommand{\wnw}{\scalebox{.6}{\begin{tikzpicture}[scale=.65,every node/.style={sloped,allow upside down}]
			\node at (0,0) {$\bullet$};
			\draw[very thick] (0,0) --node{\midarrow} (0,1);
			\draw[very thick] (0,-1) --node{\midarrow} (0,0);
			\draw[very thick] (0,0) --node{\midarrow} (-1,0);
			\draw[very thick] (1,0) --node{\midarrow} (0,0);
\end{tikzpicture}}}

\newcommand{\wnwC}{\scalebox{.6}{\begin{tikzpicture}[scale=.65,every node/.style={sloped,allow upside down}]
			\draw[very thick] (0,0) --node{\midarrow} (0,1);
			\draw[very thick] (0,-1) --node{\midarrow} (0,0);
			\draw[very thick] (0,0) --node{\midarrow} (-1,0);
			\draw[very thick] (1,0) --node{\midarrow} (0,0);
			\node at (0,0) {\color{teal}{$\bullet$}};
\end{tikzpicture}}}

\newcommand{\wlone}{\scalebox{.6}{\begin{tikzpicture}[scale=.65,every node/.style={sloped,allow upside down}]
			\node at (0,0) {$\bullet$};
			\draw[very thick] (0,0) --node{\midarrow} (0,1);
			\draw[very thick] (1,0) --node{\midarrow} (0,0);
\end{tikzpicture}}}

\newcommand{\wloneC}{\scalebox{.6}{\begin{tikzpicture}[scale=.65,every node/.style={sloped,allow upside down}]
			\draw[very thick] (0,0) --node{\midarrow} (0,1);
			\draw[very thick] (1,0) --node{\midarrow} (0,0);
			\node at (0,0) {\color{orange}{$\bullet$}};
\end{tikzpicture}}}

\newcommand{\wlmone}{\scalebox{.6}{\begin{tikzpicture}[scale=.65,every node/.style={sloped,allow upside down}]
			\node at (0,0) {$\bullet$};
			\draw[very thick] (0,1) --node{\midarrow} (0,0);
			\draw[very thick] (0,0) --node{\midarrow} (1,0);
\end{tikzpicture}}}

\newcommand{\wlout}{\scalebox{.6}{\begin{tikzpicture}[scale=.65,every node/.style={sloped,allow upside down}]
			\node at (0,0) {$\bullet$};
			\draw[very thick] (0,0) --node{\midarrow} (0,1);
			\draw[very thick] (0,0) --node{\midarrow} (1,0);
\end{tikzpicture}}}

\newcommand{\wloutC}{\scalebox{.6}{\begin{tikzpicture}[scale=.65,every node/.style={sloped,allow upside down}]
			\draw[very thick] (0,0) --node{\midarrow} (0,1);
			\draw[very thick] (0,0) --node{\midarrow} (1,0);
			\node at (0,0) {\color{violet}{$\bullet$}};
\end{tikzpicture}}}

\newcommand{\wlin}{\scalebox{.6}{\begin{tikzpicture}[scale=.65,every node/.style={sloped,allow upside down}]
			\node at (0,0) {$\bullet$};
			\draw[very thick] (0,1) --node{\midarrow} (0,0);
			\draw[very thick] (1,0) --node{\midarrow} (0,0);
\end{tikzpicture}}}

\newcommand{\wlinC}{\scalebox{.6}{\begin{tikzpicture}[scale=.65,every node/.style={sloped,allow upside down}]
			\draw[very thick] (0,1) --node{\midarrow} (0,0);
			\draw[very thick] (1,0) --node{\midarrow} (0,0);
			\node at (0,0) {\color{orange}{$\bullet$}};
\end{tikzpicture}}}

\newcommand{\wL}{\scalebox{.6}{\begin{tikzpicture}[scale=.65]
			\draw[very thick] (0,1) -- (0,0);
			\draw[very thick] (1,0) -- (0,0);
\end{tikzpicture}}}


\usepackage[colorlinks=true,hyperfootnotes=true]{hyperref}

\title{A Littlewood-type identity for Robbins polynomials}

\author[I. Fischer]{Ilse Fischer}
\address{Faculty of Mathematics, University of Vienna, Austria}
\email{\href{mailto:ilse.fischer@univie.ac.at}{ilse.fischer@univie.ac.at}}
\urladdr{\href{https://www.mat.univie.ac.at/~ifischer}{https://www.mat.univie.ac.at/~ifischer}}

\author[H. Höngesberg]{Hans Höngesberg}
\address{Faculty of Mathematics, University of Vienna, Austria}
\email{\href{mailto:hans.hoengesberg@univie.ac.at}{hans.hoengesberg@univie.ac.at}}
\urladdr{\href{https://hoengesberg.com}{https://hoengesberg.com}}

\thanks{I. Fischer acknowledges support from the Austrian Science Fund (FWF) grant 10.55776/F1002. H. Höngesberg acknowledges support from the FWF grants 10.55776/P34931 and 10.55776/J4810. Part of this research was performed while H. Höngesberg was visiting the Institute for Pure and Applied Mathematics (IPAM), which is supported by the National Science Foundation (Grant No. DMS-1925919).}
\keywords{Littlewood identity, alternating sign matrices, monotone triangles, six-vertex model}
\subjclass[2020]{05A05, 05A15, 05A19, 05E05, 82B20}

\begin{document}
	
\begin{abstract}
	We provide a generalization of the Littlewood  identity, both sides of which are related to alternating sign matrices. The classical Littlewood identity establishes a nice product formula for the sum of all Schur polynomials. Compared to the classical identity, Schur polynomials are replaced by so-called \deff{modified Robbins polynomials}. These polynomials are a generalization of Schur polynomials and enumerate \deff{down-arrowed monotone triangles}, and thus also alternating sign matrices. As an additional factor on the other side of the identity, we have a Pfaffian formula which we interpret in terms of the partition function of six-vertex model configurations corresponding to \deff{diagonally symmetric alternating sign matrices}.
\end{abstract}

\maketitle
	
\section{Introduction}

The classical Littlewood identity\footnote{In fact, there are three different sums of Schur polynomials known as Littlewood identities. The most prominent one is the identity shown here. The other two identities provide factorizations for the sum of Schur polynomials over all partitions with even parts and over all partitions whose conjugates have even parts, respectively.} \cite{Lit50} states that the sum of Schur polynomials $s_{\lambda} (x_1,\allowbreak \dots,\allowbreak x_n)$ over all integer partitions $\lambda$ admits a surprisingly nice factorization:
\begin{equation}
	\label{eq:Littlewood}
	\sum_{\lambda} s_{\lambda} (x_1,\dots,x_n) = \prod_{i=1}^{n} \frac{1}{1-x_i} \prod_{1 \le i < j \le n} \frac{1}{1-x_i x_j}.
\end{equation}
The Littlewood identity is besides the Cauchy identity one of the most influential identities in the theory of symmetric functions. Over the years, several generalizations and variants of the Littlewood identity have been found. It has various applications in representation theory as well as combinatorics, for instance, in the theory of plane partitions and alternating sign matrices (ASMs) (see, among others, \cite{Bre00,BW16,BWZJ15}). The first connection of such identities to ASMs was established by Warnaar in \cite{War08}, where a partition function of six-vertex model configurations corresponding to ASMs appears on the right-hand side of a Cauchy-type identity of Hall--Littlewood polynomials.

In this paper, we present a Littlewood-type identity for modified Robbins polynomials $R^{\ast}_{\boldk}(x_{1},\ldots,x_{n};u,v,w)$, which are three-parameter generalizations of Schur polynomials occurring as the generating function of down-arrowed monotone triangles (DAMTs); see 
Sections~\ref{sec:robbins} and \ref{sec:down} for detailed definitions. These objects have recently been introduced by Schreier-Aigner and the first author as a generalization of ASMs. In particular, we establish the following theorem, which we will prove in Section~\ref{sec:mainproof}.
\begin{thm}
	\label{thm:Robbins_Littlewood}
	Let $n$ be a positive integer. Then 
	\begin{multline}\label{eq:Robbins_Littlewood}
		\sum_{0 \le k_1 < \dots < k_n} R^{\ast}_{(k_{1},\ldots,k_{n})}(x_{1},\ldots,x_{n};1,1,w)\\
		=\prod_{i=1}^n \frac{1}{1-x_i}  \prod_{1 \le i < j \le n} \frac{x_i+x_j+w x_i x_j}{x_j-x_i}
		\pf_{\chi_{\mathrm{even}}(n) \le i < j \le n} \left(
		\begin{cases}
			1, & i=0,\\
			\frac{(x_j-x_i)(1+(1+w) x_i x_j)}{(x_i+x_j+w x_i x_j)(1-x_i x_j)},& i\ge 1,
		\end{cases} \right)
	\end{multline}
	where $\pf$ denotes the Pfaffian of an upper triangular array and $\chi_{\mathrm{even}}(n)$ equals $1$ if $n$ is even and $0$ otherwise.
\end{thm}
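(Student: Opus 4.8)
The plan is to deduce the identity from a bialternant/determinantal expression for the modified Robbins polynomials together with a summation over the strictly increasing index vectors, and then to collapse the resulting determinantal sum into a Pfaffian via a Lindström--Gessel--Viennot-type argument or an algebraic determinant-to-Pfaffian lemma. First I would recall from Sections~\ref{sec:robbins} and~\ref{sec:down} the formula expressing $R^{\ast}_{\boldk}(x_1,\dots,x_n;u,v,w)$ as a ratio of a determinant $\det_{1\le i,j\le n}\bigl(f_{k_j}(x_i)\bigr)$ (for suitable functions $f_k$ depending on $u,v,w$) over the Vandermonde-type denominator $\prod_{i<j}(x_j-x_i)$, specialised at $u=v=1$. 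Substituting this into the left-hand side and pulling the $x$-independent denominator out front, the sum becomes
\[
	\frac{1}{\prod_{1\le i<j\le n}(x_j-x_i)}\sum_{0\le k_1<\dots<k_n}\det_{1\le i,j\le n}\bigl(f_{k_j}(x_i)\bigr),
\]
and since the summand is antisymmetric in the $k$'s I can drop the ordering restriction at the cost of $1/n!$ and recognise the sum as $\det_{1\le i,j\le n}\bigl(\sum_{k\ge 0}x_i^{?}\cdots\bigr)$-type object; more precisely, after linearity in each column the inner sum produces a matrix whose entry in position $(i,j)$ is $\sum_{k\ge 0} f_k(x_i)\cdot(\text{weight depending on }j)$, which I would evaluate in closed form as a rational function of $x_i$ (this is where the factor $\prod_i \frac{1}{1-x_i}\prod_{i<j}\frac{x_i+x_j+wx_ix_j}{x_j-x_i}$ and the Pfaffian entries $\frac{(x_j-x_i)(1+(1+w)x_ix_j)}{(x_i+x_j+wx_ix_j)(1-x_ix_j)}$ should emerge).

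The key combinatorial input at the level of the underlying objects is that summing the DAMT generating functions over all admissible bottom rows $0\le k_1<\dots<k_n$ corresponds to enumerating certain half-turn or diagonally symmetric configurations, which is exactly why a Pfaffian (rather than a determinant) shows up: the relevant weighted lattice paths come in a reflectionally symmetric family, and the Lindström--Gessel--Viennot lemma in its Pfaffian (Stembridge/Okada) form turns non-intersecting path families with a free endpoint on the diagonal into a Pfaffian. Concretely, I would set up the bijection between the terms of $\sum_{\boldk}R^{\ast}_{\boldk}$ and a six-vertex/monotone-triangle ensemble symmetric under transposition, use the Izergin--Korepin-type or Pfaffian-LGV machinery to write its partition function as $\pf_{\chi_{\mathrm{even}}(n)\le i<j\le n}(M_{ij})$, and then identify $M_{ij}$ with the stated rational function by a direct residue/summation computation for the $2\times 2$ (equivalently, two-path) case. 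The parity bookkeeping — inserting the extra row/column indexed by $i=0$ with entry $1$ precisely when $n$ is even, i.e. the $\chi_{\mathrm{even}}(n)$ shift — is the standard device for handling odd-size Pfaffians by bordering, and I would check it against small cases $n=1,2,3$.

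I expect the main obstacle to be the explicit evaluation of the inner column sums $\sum_{k\ge 0}(\cdots)$ and the subsequent determinant/Pfaffian manipulation: one must show that the (a priori) determinant of the summed matrix factors as the claimed product times a Pfaffian whose entries have the specific form $\frac{(x_j-x_i)(1+(1+w)x_ix_j)}{(x_i+x_j+wx_ix_j)(1-x_ix_j)}$. There are essentially two routes. The algebraic route uses a determinant-evaluation identity of the form $\det_{1\le i,j\le n}\bigl(\frac{a(x_i)-a(x_j)}{x_i-x_j}\bigr)$ or a Cauchy-type / Schur-Pfaffian identity to extract the product factor and leave a Pfaffian; here the work is in choosing the right auxiliary function and controlling the $w$-dependence. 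The bijective route instead proves the identity at the level of the six-vertex model: one interprets both sides as partition functions and constructs a weight-preserving bijection, the difficulty being to match the boundary conditions of the diagonally symmetric ASM model with the free-sum boundary condition on the DAMT side. I would pursue the algebraic route as the primary line of attack, falling back on small-$n$ verification and the Pfaffian-LGV interpretation to confirm the combinatorial meaning of the right-hand side, and I would isolate as a separate lemma the key determinant-to-Pfaffian step so that the parity case analysis ($n$ even versus odd) is confined to that lemma.
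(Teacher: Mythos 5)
There is a genuine gap at the very first step, and it is fatal to the route you chose. Your plan hinges on a bialternant formula $R^{\ast}_{(k_1,\dots,k_n)}(x_1,\dots,x_n;1,1,w)=\det_{1\le i,j\le n}\bigl(f_{k_j}(x_i)\bigr)/\prod_{i<j}(x_j-x_i)$, but no such formula exists and none is given in Sections~\ref{sec:robbins}--\ref{sec:down}: inside the antisymmetrizer the factor $\prod_{1\le i<j\le n}(x_ix_j+1+wx_i)$ couples the variables pairwise, so the antisymmetrized expression is not a single determinant whose $j$th column depends only on $k_j$ (this only degenerates to Schur/Grothendieck-type determinants at special parameter values, cf.\ Proposition~\ref{prop:RobbinsToSchur}). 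Without that structure, the minor-summation / Lindstr\"om--Gessel--Viennot--Stembridge--Okada machinery you want to invoke cannot even be set up. A second, independent error: for a summand that is \emph{antisymmetric} in $k_1,\dots,k_n$ you cannot ``drop the ordering restriction at the cost of $1/n!$'' --- the unrestricted sum of an antisymmetric function over all tuples is $0$; the correct device for summing minors over ordered index sets is a Pfaffian minor-summation formula, which again presupposes exactly the determinant form you do not have. Finally, your fallback of interpreting the left-hand side as a diagonally symmetric six-vertex partition function is not an available input: in the paper that interpretation (Theorem~\ref{thm:comb_interpretation_RHS}) is a \emph{consequence} of Theorem~\ref{thm:Robbins_Littlewood} combined with the independently known Pfaffian evaluation of $Z_{\DSASM}$ (Theorem~\ref{thm:partition function}, via \cite{BFK23}), so using it here would be circular, and no weight-preserving bijection between the summed DAMT ensemble and DSASM configurations is known.

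For comparison, the paper's first proof avoids any determinant form: since the coupling factor does not depend on $\boldk$, the geometric series can be summed \emph{inside} the antisymmetrizer, giving the closed antisymmetrizer expression for $F(x_1,\dots,x_n)$ and hence the recursion \eqref{eq:rec_Littlewood} in which one variable is deleted; the identity is then proved by induction on $n$ by showing that the product-times-Pfaffian on the right of \eqref{eq:Robbins_Littlewood} satisfies the same recursion, the sum over $k$ being absorbed as a Pfaffian expansion along a bordered last column and then collapsed by row/column operations whose validity is exactly the content of the technical Lemma~\ref{lem:MatrixOperationEven}, with separate treatment of the parities of $n$. The one viable thread in your proposal is the brief mention of an Izergin--Korepin argument, which matches the paper's second (sketched) proof; but to make that work you must supply the full characterizing data --- symmetry, the initial condition, the degree bound for $\prod_i(1-x_i)\prod_{i<j}(1-x_ix_j)F$, and the $n+1$ specializations at $x_n=0$, $x_n=1$ and $x_n=x_i^{-1}$ --- none of which appears in your outline.
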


By straightforward algebraic manipulation of \eqref{eq:Robbins_Littlewood}, we obtain another Littlewood-type identity for modified Robbins polynomials where we sum over weakly decreasing indices instead of strictly increasing indices.

\begin{cor}\label{cor:Littlewood}
	Let $n$ be a positive integer. Then 
	\begin{multline*}
		\sum_{k_1 \ge \dots \ge k_n \ge 0} R^{\ast}_{(k_{1},\ldots,k_{n})}(x_{1},\dots,x_{n};1,1,w)\\
		=  \prod_{i=1}^n \frac{1}{1-x_i}  \prod_{1 \le i < j \le n} \frac{x_i+x_j+w}{x_i-x_j}
		\pf_{\chi_{\mathrm{even}}(n) \le i < j \le n} \left(
		\begin{cases}
			1, & i=0,\\
			\frac{(x_j-x_i)((1+w) + x_i x_j)}{(x_i+x_j+w)(1-x_i x_j)},& i\ge 1.
		\end{cases} \right)%
	\end{multline*}
\end{cor}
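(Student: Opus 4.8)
The plan is to derive Corollary~\ref{cor:Littlewood} from Theorem~\ref{thm:Robbins_Littlewood} by applying the substitution $x_i \mapsto x_i^{-1}$ ($1 \le i \le n$) to identity~\eqref{eq:Robbins_Littlewood} and then rewriting both sides. On the right-hand side this is pure algebra. The single-variable factors produce a monomial, since $\frac{1}{1-x_i^{-1}} = -\frac{x_i}{1-x_i}$, whereas the pairwise factors merely rescale,
\[
  \frac{x_i^{-1}+x_j^{-1}+w\,x_i^{-1}x_j^{-1}}{x_j^{-1}-x_i^{-1}} = \frac{x_i+x_j+w}{x_i-x_j},
\]
so the prefactor of~\eqref{eq:Robbins_Littlewood} evaluated at $(x_1^{-1},\dots,x_n^{-1})$ becomes $(-1)^n\,x_1\cdots x_n$ times the prefactor appearing in the corollary. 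Clearing denominators in the same manner shows that every Pfaffian entry with $i\ge 1$ is carried to $\frac{(x_j-x_i)((1+w)+x_ix_j)}{(x_i+x_j+w)(1-x_ix_j)}$, while the entries with $i=0$ are the constant $1$ and are left unchanged; hence the whole Pfaffian is carried to the one in Corollary~\ref{cor:Littlewood}. Altogether, the right-hand side of~\eqref{eq:Robbins_Littlewood} at $(x_1^{-1},\dots,x_n^{-1})$ equals $(-1)^n\,x_1\cdots x_n$ times the right-hand side of the corollary.

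It then remains to identify the transformed left-hand side. Here I would invoke a reciprocity property of the modified Robbins polynomials under $x_i \mapsto x_i^{-1}$ (which I would extract from the combinatorial description in Section~\ref{sec:robbins}), together with the classical bijection between the two index sets that sends a strictly increasing sequence $(\ell_1,\dots,\ell_n)$ with $\ell_1\ge 0$ to the weakly decreasing sequence $(\ell_n-(n-1),\ell_{n-1}-(n-2),\dots,\ell_2-1,\ell_1)$. Combining these should yield
\[
  \sum_{0 \le k_1 < \dots < k_n} R^{\ast}_{(k_1,\dots,k_n)}(x_1^{-1},\dots,x_n^{-1};1,1,w)
  = (-1)^n\,x_1\cdots x_n \sum_{k_1 \ge \dots \ge k_n \ge 0} R^{\ast}_{(k_1,\dots,k_n)}(x_1,\dots,x_n;1,1,w).
\]
Feeding this identity and the computation of the previous paragraph into~\eqref{eq:Robbins_Littlewood}, the common factor $(-1)^n\,x_1\cdots x_n$ cancels, and the statement of Corollary~\ref{cor:Littlewood} drops out.

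I expect the only genuinely delicate point to be the reciprocity for $R^{\ast}_{(k_1,\dots,k_n)}$ under inversion of the variables and, with it, the exact bookkeeping of the monomial prefactor, so that the stray factor $(-1)^n\,x_1\cdots x_n$ is produced precisely. One must also be mindful that the two sums are being compared as formal power series and that the displayed index bijection matches up the summands correctly, in particular that weakly decreasing sequences with repeated parts are permitted on the corollary side (where individual $R^{\ast}$ need not have nonnegative coefficients, as already the case $n=2$ illustrates). Everything else — the rescaling of the factors and the entrywise transformation of the Pfaffian — is mechanical.
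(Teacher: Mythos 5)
Your handling of the right-hand side is correct and is exactly what the paper does: under $x_i\mapsto x_i^{-1}$ the univariate factors contribute $(-1)^n x_1\cdots x_n$, the pairwise factors and the Pfaffian entries transform as you compute, and the reciprocity identity you then want for the left-hand side is indeed the correct one (it is the relation recorded in the paper's introduction). The genuine gap is in how you propose to establish that identity. The termwise reciprocity that the antisymmetrizer definition actually gives is
$R^{\ast}_{(k_1,\dots,k_n)}(x_1^{-1},\dots,x_n^{-1};1,1,w)=R^{\ast}_{(-k_n,\dots,-k_1)}(x_1,\dots,x_n;1,1,w)$,
with no sign and no monomial prefactor, and it turns the index set $0\le k_1<\dots<k_n$ into strictly increasing \emph{nonpositive} sequences. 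There is no identity relating $R^{\ast}$ indexed by a strictly increasing sequence to $\pm(\text{monomial})\cdot R^{\ast}$ indexed by its staircase-subtracted weakly decreasing companion: already for $n=2$ one has $R^{\ast}_{(0,1)}(x_1,x_2;1,1,w)=1+x_1x_2$ while $R^{\ast}_{(0,0)}(x_1,x_2;1,1,w)=-w$, so under your bijection $(0,1)\mapsto(0,0)$ the required termwise equality $1+x_1^{-1}x_2^{-1}=x_1x_2\cdot(-w)$ fails. In fact no termwise matching of any kind can work: for $n=1$ the two sides are $\sum_{k\ge0}x^{-k}$ and $-x\sum_{k\ge0}x^{k}$, which agree only after summing the geometric series, as the rational function $\tfrac{-x}{1-x}$, not coefficient by coefficient. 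So the step you flag as delicate is not just delicate; as sketched it would fail.

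The paper closes exactly this gap by working with the summed series inside the antisymmetrizer rather than with individual summands: one writes $\sum_{k_1\ge\dots\ge k_n\ge0}\prod_i x_i^{k_i}=\prod_{i=1}^{n}\bigl(1-x_1\cdots x_i\bigr)^{-1}$, applies $\tfrac{1}{1-y}=\tfrac{-y^{-1}}{1-y^{-1}}$ to each factor (this is where the factor $(-1)^n$ and the monomial arise), rewrites $x_ix_j+1+wx_i$ in terms of the inverted variables, and then performs the relabelling $x_i\mapsto x_{n+1-i}$ so that the partial products $\prod_{j\le i}x_j^{-1}$ become $\prod_{j\ge i}x_j^{-1}$; this identifies the weakly decreasing sum with $(-1)^n\prod_i x_i^{-1}$ times the strictly increasing sum evaluated at $x_1^{-1},\dots,x_n^{-1}$, after which Theorem~\ref{thm:Robbins_Littlewood} finishes the proof exactly as in your final step. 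Replacing your ``reciprocity plus index bijection'' step by this rational-function manipulation (or an equivalent argument at the level of the summed series) is what your proposal is missing.
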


While DAMTs are defined for strictly increasing~$\boldk$, there also exists a signed combinatorial interpretation underlying Corollary~\ref{cor:Littlewood}, which we briefly mention in Section~\ref{sec:down}.

Notably, Theorem~\ref{thm:Robbins_Littlewood} and Corollary~\ref{cor:Littlewood} exhibit combinatorial reciprocity as
\begin{multline*}
	\sum_{k_1 \ge \dots \ge k_n \ge 0} R^{\ast}_{(k_{1},\ldots,k_{n})}(x_{1},\ldots,x_{n};1,1,w)\\
	= (-1)^n \prod_{i=1}^{n} x_i^{-1} \sum_{0 \le k_1 < \dots < k_n} R^{\ast}_{(k_{1},\ldots,k_{n})}(x_{1}^{-1},\ldots,x_{n}^{-1};1,1,w).
\end{multline*}
Combinatorial reciprocity phenomena for monotone triangles have been studied before, see for instance 
\cite{FisRi2013}.

We also provide a combinatorial interpretation of the right-hand side of \eqref{eq:Robbins_Littlewood} in terms of the partition function~$Z_{\DSASM}$ for six-vertex model configurations corresponding to diagonally symmetric alternating sign matrices (DSASMs); see Section~\ref{sec:six-vertex} for further details.

\begin{thm}
	\label{thm:comb_interpretation_RHS}
	Let $n$ be a positive integer. Then 
	\begin{equation*}
		\sum_{0 \le k_1 < \dots < k_n} R^{\ast}_{(k_{1},\ldots,k_{n})}(x_{1},\ldots,x_{n};1,1,w)
		= \prod_{i=1}^{n} \frac{1}{1-x_i} \prod_{1 \le i < j \le n} \frac{1}{1-x_i x_j} Z_{\DSASM} (x_1,\dots,x_n).
	\end{equation*}
\end{thm}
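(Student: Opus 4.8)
The plan is to deduce Theorem~\ref{thm:comb_interpretation_RHS} from Theorem~\ref{thm:Robbins_Littlewood} together with a Pfaffian evaluation of the six-vertex partition function $Z_{\DSASM}$. Cancelling the common factor $\prod_{i=1}^{n}(1-x_i)^{-1}$ on both sides of the two identities, the assertion becomes equivalent to
\[
Z_{\DSASM}(x_1,\dots,x_n)=\prod_{1\le i<j\le n}\frac{(x_i+x_j+w\,x_ix_j)(1-x_ix_j)}{x_j-x_i}\,\pf_{\chi_{\mathrm{even}}(n)\le i<j\le n}\!\left(M_{ij}\right),
\]
where $M_{ij}$ denotes the array entry appearing in \eqref{eq:Robbins_Littlewood}. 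So it suffices to establish this closed form for $Z_{\DSASM}$; this will be the content of Section~\ref{sec:six-vertex}, and given it, Theorem~\ref{thm:comb_interpretation_RHS} follows immediately by substitution into Theorem~\ref{thm:Robbins_Littlewood}.

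To prove the displayed identity I would run the Izergin--Korepin--Kuperberg technique on the triangular domain carrying the DSASM boundary and diagonal weights set up in Section~\ref{sec:six-vertex}, with $w$ recording the statistic of the configurations along the symmetry diagonal. The Yang--Baxter equation, combined with the reflection/boundary relations on the diagonal, shows that an explicit Laurent-polynomial multiple of $Z_{\DSASM}$ is symmetric under exchanging adjacent spectral parameters $x_i\leftrightarrow x_{i+1}$, that it has bounded degree in each $x_i$, and that it obeys a recursion reducing the number of parameters (by one or by two, according to the parity of $n$) upon specializing $x_n$ to a value where one of the vertex weights degenerates. These three properties pin down $Z_{\DSASM}$ uniquely, so the proof reduces to checking that the right-hand side above enjoys the same symmetry, degree bound, and recursion. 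Symmetry is immediate, since the antisymmetry of the Pfaffian is cancelled by that of $\prod_{i<j}(x_j-x_i)$; the degree bound follows by inspecting the denominators of the $M_{ij}$ against the prefactor; and the recursion comes from the Pfaffian (Laplace-type) expansion along the last row and column, using that the entries $M_{in}$ collapse at the distinguished specialization of $x_n$. The artificial index $i=0$ with entries $M_{0j}=1$ is exactly what makes the Pfaffian meaningful for odd $n$ and governs the parity bookkeeping in the recursion. (Alternatively, one could identify $Z_{\DSASM}$ with an already known symmetry-class partition function and then match the two Pfaffians, but transporting the various prefactors through such a comparison seems no easier.)

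The main obstacle I anticipate lies on the six-vertex side rather than the algebraic side: correctly fixing the triangular domain, the boundary data, and the diagonal vertex weights so that $Z_{\DSASM}$ genuinely satisfies a \emph{closed} recursion of the required shape, and then carrying every prefactor through that recursion so that it matches the Pfaffian exactly. In particular the quadratic form $x_i+x_j+w\,x_ix_j$ does not factor as a product of a function of $x_i$ and a function of $x_j$, so it cannot be absorbed into rows and columns of the Pfaffian by multilinearity and must be transported through each step of the argument. Once the Pfaffian evaluation of $Z_{\DSASM}$ is secured, the remaining reconciliation of the prefactors $\prod_i (1-x_i)^{-1}$, $\prod_{i<j}\frac{x_i+x_j+w x_ix_j}{x_j-x_i}$, and $\prod_{i<j}(1-x_ix_j)^{-1}$ between the two identities is a routine computation, completing the proof.
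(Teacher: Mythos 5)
Your top-level reduction is exactly the paper's: after cancelling the prefactors, Theorem~\ref{thm:comb_interpretation_RHS} is equivalent to the Pfaffian evaluation of $Z_{\DSASM}(x_1,\dots,x_n)$ recorded as Theorem~\ref{thm:partition function}, and combined with Theorem~\ref{thm:Robbins_Littlewood} the statement then follows by substitution, so your algebraic bookkeeping of the factors is fine. Where you diverge is in how that evaluation is obtained. The paper does not re-run the Izergin--Korepin/Kuperberg machinery: it performs the substitution $w=q^4+q^{-4}$, $x_i=-q^2(1+q^2u_i^2)(q^6+u_i^2)^{-1}$, checks that the weights of Table~\ref{tab:six-vertex weights} then become a specialization of the DSASM weights of \cite{BFK23}, and reads the Pfaffian off from \cite[Theorem~11]{BFK23}; this is precisely the ``alternative'' you set aside as no easier, and in fact it is much shorter, because all Yang--Baxter and boundary analysis is already packaged in that reference (the only remaining work is the change of variables and the polynomiality of the weights, cf.\ Proposition~\ref{prop:polynomial}). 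Your route --- characterizing a suitable multiple of $Z_{\DSASM}$ by symmetry, degree bounds and specializations, then matching the Pfaffian side via expansion along the last row and column --- is the standard method and would succeed (it is essentially how the formula in \cite{BFK23} was first proved), but as written it is a program rather than a proof: the boundary relation for the specific left-boundary weights involving $\tau(x)=\sqrt{-1-wx-x^2}$ and $\sqrt{w+2}$, the exact list of specializations closing the recursion, and the verification that the Pfaffian obeys the same recursion are all left unexecuted, and these are exactly the technical points the paper bypasses by importing the known result. In short: same decomposition and a correct skeleton, but for the key lemma you propose to re-derive from scratch what the paper obtains by a change of variables; your way buys self-containedness at the price of substantial unwritten work, the paper's buys brevity at the price of dependence on \cite{BFK23}.
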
 

The main purpose of Theorems~\ref{thm:Robbins_Littlewood} and \ref{thm:comb_interpretation_RHS} is twofold: on the one hand, to establish a Littlewood-type identity for modified Robbins polynomials by presenting a closed expression for $\sum_{0 \le k_1 < \dots < k_n} R^{\ast}_{(k_{1},\ldots,k_{n})}(x_{1},\ldots,x_{n};1,1,w)$, and on the other hand, to provide a combinatorial interpretation of this expression in terms of certain six-vertex model configurations. Although we currently lack such an interpretation for the left-hand side of the generalized Littlewood identity~\eqref{eq:Robbins_Littlewood},
both theorems reveal a deep connection to \emph{alternating sign matrices} (ASMs),  which are $n \times n$ matrices with entries $-1$, $0$ and $1$ such that the nonzero entries along each row and column alternate in sign and sum to one.
Since the introduction of ASMs in the early 1980s by Robbins and Rumsey \cite{RR86}, the problem of enumerating ASMs was challenging combinatorialists for over a decade. It was finally resolved by Zeilberger \cite{Zei96}, and a few other proofs have followed since then \cite{Kup96,Fis07}. The enumeration of symmetry classes of ASMs, initiated by Richard Stanley \cite{Rob91,Rob}, has been an elusive problem ever since. There are eight different symmetry classes of ASMs that are induced by the symmetry group of the square. Among them, the enumeration of \deff{diagonally symmetric alternating sign matrices}, that is, ASMs that stay invariant under matrix transposition, stands out. It is the first symmetry class for which no product formula is known or conjectured but for which an enumerative formula of a different kind has been established. This has recently been achieved in \cite[Corollary~4]{BFK23} by providing the following Pfaffian formula for the number of DSASMs of order~$n$:
\begin{equation}\label{eq:DSASMs}
	\pf_{\chi_{\mathrm{odd}}(n) \le i < j \le n-1} \left(\langle u^i v^j\rangle \frac{(v-u)(2+u v)}{(1-u v)(1- u-v)} \right),
\end{equation}
where, analogous to before, $\chi_{\mathrm{odd}}(n)$ equals $1$ if $n$ is odd and $0$ otherwise; furthermore, $\langle u^i v^j \rangle f(u,v)$ denotes the coefficient of $u^i v^j$ in the expansion of $f(u,v)$.

Our third main theorem provides another connection between the unbounded sum of modified Robbins polynomials and ASMs. We show that the coefficient of the highest term in $Z_{\DSASM} (x_1,\dots,x_n)$ yields a weighted enumeration of DSASMs; see Section~\ref{sec:highest_term} for further details.

\begin{thm}\label{thm:highest_term}
	The coefficient of $x_1^{n-1} \cdots x_n^{n-1}$ in $Z_{\DSASM} (x_1,\dots,x_n)$ is given by the generating function
	\begin{equation*}
		\sum_{\SV(n)} w^{\# \raisebox{-1ex}{\scalebox{.75}{\wnw}}+\# \raisebox{-1ex}{\scalebox{.75}{\wse}}}.
	\end{equation*}
	In particular, the coefficient is
	\begin{equation*}
		w^{\binom{n}{2}} \pf_{\chi_{\mathrm{odd}}(n) \le i < j \le n-1} \left(\langle u^i v^j\rangle \frac{(v-u)(1+u v+w)}{(1-u v)(w- u-v)} \right),
	\end{equation*}
	which coincides with the enumeration formula~\eqref{eq:DSASMs} for DSASMs of order~$n$ if $w=1$.
\end{thm}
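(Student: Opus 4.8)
The plan is to prove the two assertions of the statement separately: (1) that the coefficient of $x_1^{n-1}\cdots x_n^{n-1}$ in $Z_{\DSASM}$ equals the six‑vertex generating function $\sum_{\SV(n)} w^{\# \raisebox{-1ex}{\scalebox{.75}{\wnw}}+\# \raisebox{-1ex}{\scalebox{.75}{\wse}}}$, and (2) that this generating function equals $w^{\binom n2}$ times the stated Pfaffian; the specialization $w=1$ of (2) is then an immediate comparison with \eqref{eq:DSASMs}.

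For (1) I would argue directly from the definition $Z_{\DSASM}(x_1,\dots,x_n)=\sum_{C\in\SV(n)}\prod_c \operatorname{wt}_c(C)$, the product running over the cells of the triangular region, where an off‑diagonal cell on lines $i$ and $j$ carries a weight that is affine in each of $x_i$ and $x_j$, and a diagonal cell on line $i$ carries a weight that is constant in $x_i$ (and in $w$). Since the line carrying $x_i$ meets exactly $n-1$ off‑diagonal cells and one diagonal cell, $\deg_{x_i} Z_{\DSASM}\le n-1$; hence the coefficient of $x_1^{n-1}\cdots x_n^{n-1}$ is obtained by replacing, in every off‑diagonal cell, its vertex weight by the part homogeneous of degree $1$ in each of its two spectral parameters. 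With the normalization of the weights fixed in Section~\ref{sec:six-vertex}, this top part equals $w$ for the two vertex types $\raisebox{-1ex}{\scalebox{.75}{\wnw}}$ and $\raisebox{-1ex}{\scalebox{.75}{\wse}}$ and $1$ for the other four, so each configuration $C$ contributes exactly $w^{\#\raisebox{-1ex}{\scalebox{.75}{\wnw}}(C)+\#\raisebox{-1ex}{\scalebox{.75}{\wse}}(C)}$; summing over $C$ gives the first displayed identity.

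For (2) I would insert the closed form for $Z_{\DSASM}$ obtained by combining Theorems~\ref{thm:Robbins_Littlewood} and \ref{thm:comb_interpretation_RHS}, namely $Z_{\DSASM}=\prod_{1\le i<j\le n}(1-x_ix_j)\prod_{1\le i<j\le n}\frac{x_i+x_j+wx_ix_j}{x_j-x_i}\,\pf_{\chi_{\mathrm{even}}(n)\le i<j\le n}(M_{ij})$ with $M_{0j}=1$ and $M_{ij}=\frac{(x_j-x_i)(1+(1+w)x_ix_j)}{(x_i+x_j+wx_ix_j)(1-x_ix_j)}$. As $Z_{\DSASM}$ is a polynomial of degree $\le n-1$ in each $x_i$, extracting the coefficient of $x_1^{n-1}\cdots x_n^{n-1}$ is the same as reading off the constant term of the polynomial $\prod_i x_i^{n-1}\,Z_{\DSASM}(1/x_1,\dots,1/x_n)$, i.e. its value at $x=0$. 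The substitution $x_i\mapsto 1/x_i$ turns the right‑hand side into $\prod_{i<j}(x_ix_j-1)\prod_{i<j}\frac{x_i+x_j+w}{x_i-x_j}\,\pf_{\chi_{\mathrm{even}}(n)\le i<j\le n}(\widetilde M_{ij})$ with $\widetilde M_{0j}=1$, $\widetilde M_{ij}=\frac{(x_j-x_i)(1+w+x_ix_j)}{(x_i+x_j+w)(1-x_ix_j)}$ — precisely the entries occurring in Corollary~\ref{cor:Littlewood}. Because $\pf(\widetilde M)$ vanishes whenever two of the $x_i$ coincide it is divisible by $\prod_{i<j}(x_j-x_i)$; after dividing this out, evaluation at $x=0$ brings in the constants $\prod_{i<j}(x_ix_j-1)\big|_{0}=(-1)^{\binom n2}$ and $\prod_{i<j}(x_i+x_j+w)\big|_{0}=w^{\binom n2}$ and leaves the ``leading Pfaffian'' $\lim_{x\to0}\pf(\widetilde M)/\prod_{i<j}(x_j-x_i)$. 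By the Pfaffian analogue of the classical limit $\lim_{x\to0}\det(f_j(x_i))/\prod_{i<j}(x_j-x_i)=\det\langle x^{i-1}\rangle f_j$ — which follows from the Cauchy–Binet‑type expansion $\pf(V A V^{T})=\sum_K\det(V_{\bullet,K})\,\pf(A_{K,K})$ applied to the factorization $\widetilde M_{ij}=a(x_i,x_j)$, $a(u,v)=\frac{(v-u)(1+w+uv)}{(u+v+w)(1-uv)}$ — this leading Pfaffian is the Pfaffian of the coefficient array $\langle u^iv^j\rangle a(u,v)$; the change of variables $u\mapsto -u,\ v\mapsto -v$ (which sends $u+v+w$ to $w-u-v$, the accompanying sign changes combining to $+1$) identifies it with $\pf_{\chi_{\mathrm{odd}}(n)\le i<j\le n-1}\bigl(\langle u^iv^j\rangle\tfrac{(v-u)(1+uv+w)}{(1-uv)(w-u-v)}\bigr)$. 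This proves (2); setting $w=1$ turns $\tfrac{(v-u)(1+uv+w)}{(1-uv)(w-u-v)}$ into $\tfrac{(v-u)(2+uv)}{(1-uv)(1-u-v)}$ and $w^{\binom n2}$ into $1$, recovering \eqref{eq:DSASMs}.

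I expect the main obstacle to lie in step (2), and within it chiefly in the case of odd $n$, where $M$ (hence $\widetilde M$) carries the constant row $M_{0j}=1$ which does not fit the $V A V^{T}$ framework. There one expands the Pfaffian along the index $0$ and is left with a sum $\sum_{j}\Phi(\{x_i\}_{i\ne j})/\prod_{i\ne j}(x_j-x_i)$, where $\Phi$ is the leading Pfaffian of the kernel $a$ in $n-1$ variables; using the Schur expansion of $\Phi$ one shows its limit at $x=0$ equals the coefficient of $s_{(1^{n-1})}$ in $\Phi$, which is exactly $\pf_{1\le i<j\le n-1}\langle u^iv^j\rangle a(u,v)$ — this is what produces the parity‑dependent index range $\chi_{\mathrm{odd}}(n)\le i<j\le n-1$ (versus $\chi_{\mathrm{even}}(n)\le i<j\le n$ in Theorem~\ref{thm:Robbins_Littlewood}). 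The remaining work is bookkeeping of signs, coming from the Pfaffian and determinant expansions and from $u,v\mapsto -u,-v$; these can be pinned down against the small cases $n=1,2,3$. In step (1) the only delicate point is checking which vertex weights carry the distinguished factor $w$ in their bihomogeneous top part, a finite verification once the weights of Section~\ref{sec:six-vertex} are in hand.
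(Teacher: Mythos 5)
Your step (1) rests on a misidentification of the weights. In Table~\ref{tab:six-vertex weights} the vertices \raisebox{-1ex}{\scalebox{.75}{\wnw}} and \raisebox{-1ex}{\scalebox{.75}{\wse}} carry the weight $1-x_ix_j$, whose top bihomogeneous part is $-x_ix_j$ (coefficient $-1$, not $w$), while it is \raisebox{-1ex}{\scalebox{.75}{\wne}} and \raisebox{-1ex}{\scalebox{.75}{\wsw}}, with weight $x_i+x_j+wx_ix_j$, that contribute the factor $w$ at top degree; the $\pm1$ bulk vertices contribute $-1$ via $\tau(x_i)\tau(x_j)$, and the left boundary vertices \raisebox{-.25ex}{\wlout}, \raisebox{-.25ex}{\wlin} are not constant in $x_i$ and $w$ but contribute $-\imag\sqrt{w+2}$. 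So the leading-coefficient extraction you describe does not give $\sum_{\SV(n)} w^{\# \raisebox{-1ex}{\scalebox{.75}{\wnw}}+\# \raisebox{-1ex}{\scalebox{.75}{\wse}}}$; it gives the quite different generating function \eqref{eq:highest_weight}, involving $(-w)^{\# \raisebox{-1ex}{\scalebox{.75}{\wne}}+\# \raisebox{-1ex}{\scalebox{.75}{\wsw}}}$, a sign per $-1$ boundary entry and powers of $-\imag\sqrt{w+2}$. The equality of that generating function with $\sum w^{\# \raisebox{-1ex}{\scalebox{.75}{\wnw}}+\# \raisebox{-1ex}{\scalebox{.75}{\wse}}}$ is precisely the nontrivial content of the first assertion: in the paper it is obtained by evaluating both weighted enumerations through \cite[Theorem~20]{BFK23} and then matching the two Pfaffians via Lemmas~\ref{thm:pfaffian_transform} and \ref{thm:pfaffian_transform_oddcase} (and explicit row/column operations in the odd case), and a bijective explanation is left open as Problem~\ref{prob:bijective_proof}. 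Your proposal assumes this identity away through the incorrect weight assignment, so the first claim of the theorem is not proved.

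Your step (2) is, by contrast, a genuinely different and plausible route to the \emph{second} assertion: extracting the coefficient of $x_1^{n-1}\cdots x_n^{n-1}$ from the closed formula for $Z_{\DSASM}$ via $x_i\mapsto 1/x_i$, the divisibility of $\pf(\widetilde M)$ by $\prod_{i<j}(x_j-x_i)$, and the minor-summation (Ishikawa--Wakayama) expansion $\pf(VAV^{T})=\sum_K\det(V_{\bullet,K})\pf(A_{K,K})$ does yield $w^{\binom n2}\pf_{\chi_{\mathrm{odd}}(n)\le i<j\le n-1}\bigl(\langle u^iv^j\rangle\tfrac{(v-u)(1+uv+w)}{(1-uv)(w-u-v)}\bigr)$, including the index shift in the odd case (a bordered version of the same minor summation handles the row of $1$'s more cleanly than the expansion you sketch), and the paper proceeds quite differently here. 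But even with step (2) completed, you would still have to prove that $\sum_{\SV(n)} w^{\# \raisebox{-1ex}{\scalebox{.75}{\wnw}}+\# \raisebox{-1ex}{\scalebox{.75}{\wse}}}$ equals this Pfaffian (the content of \eqref{eq:highest_gen_2} in the paper, again resting on \cite[Theorem~20]{BFK23} and the Pfaffian lemmas), which your proposal nowhere supplies; as it stands the connection between the combinatorial generating function in the statement and the coefficient of the top term is missing.
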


We discuss aspects of the relevance of the result. To date, we know three other classes of combinatorial objects that are equinumerous with ASMs: \deff{descending plane partitions} (DPPs), \deff{totally symmetric self-complementary plane partitions} (TSSCPPs) and \deff{alternating sign triangles} (ASTs). 
See \cite{And79,MRR83,MRR86,And94,ABF20} for parts of the relevant literature.
However, no bijection between these classes of objects has been found so far. The lack of bijective proofs has become one of the main driving forces for research in this area. See \cite{BP99,FK20} for some concise overviews over the history of the ASM theorem and the missing bijections.

The classical Littlewood identity admits a bijective proof by exploiting the symmetry of the celebrated \emph{Robinson--Schensted--Knuth correspondence} (RSK). A bijective proof of  Littlewood-type identities related to ASMs might lead to bijections between the four different classes of combinatorial objects mentioned above. This is all the more promising since Littlewood-type identities have played a key role in relating alternating sign matrix objects (ASMs and ASTs) with plane partition objects (DPPs and TSSCPPs) in a computational manner, see for instance \cite{Fis19, Fis19Trap, Hon21, Hon22}. 
A bounded version \cite{Fis22} was used in a similar vein in \cite{gangl}.

However, the significance of our main results goes beyond presenting a concrete idea of how bijections between ASMs and plane partitions could be constructed: a bounded version of Theorem~\ref{thm:Robbins_Littlewood} would be a key ingredient in solving the long-standing Gog-Magog Conjecture \cite{Kra96}, \cite[Conjecture~5]{Kra16}, which has remained unsolved for nearly three decades. On a different note, since the classical Littlewood identity is 
related to the character theory of classical groups, the generalization also raises the question of whether it has a representation-theoretic meaning. 

\subsection*{Outline of the paper}
In Section~\ref{sec:robbins}, we introduce the modified Robbins polynomials $R_{\boldk}^{\ast}(x_{1},\ldots,x_{n};u,v,w)$ and explain the correlation to other symmetric functions. We pre\-sent DAMTs as a combinatorial model for $R_{\boldk}^{\ast}(x_{1},\ldots,x_{n};u,v,w)$ in Section~\ref{sec:down} and provide two different proofs of Theorem~\ref{thm:Robbins_Littlewood} in Section~\ref{sec:mainproof}, where we also derive Corollary~\ref{cor:Littlewood}. In Section~\ref{sec:six-vertex}, we introduce a six-vertex model for DSASMs and establish the Pfaffian formula for the corresponding partition function $Z_{\DSASM} (x_1,\dots,x_n)$, proving Theorem~\ref{thm:comb_interpretation_RHS}. Then, in Section~\ref{sec:highest_term}, we show that the highest term in the polynomial expansion of $Z_{\DSASM} (x_1,\dots,x_n)$ is given by a generating function of DSASMs (Theorem~\ref{thm:highest_term}). The computational proof of Theorem~\ref{thm:highest_term} equates two generating functions of DSASMSs; we pose it as an open problem (Problem~\ref{prob:bijective_proof}) to prove this equality bijectively. In Section~\ref{sec:recovery}, we explain how to recover the classical Littlewood identity from the Littlewood identity for modified Robbins polynomials and present a related open problem. We conclude the paper with Section~\ref{sec:conjecture}, in which we propose a generalization of Theorem~\ref{thm:Robbins_Littlewood} for fully inhomogeneous spin Hall--Littlewood symmetric rational functions and discuss several of its implications.

\section{Modified Robbins polynomials}
\label{sec:robbins}

To begin with, let us fix some notation. For any positive integer~$n$, we denote by $\mathfrak{S}_{n}$ the symmetric group of order~$n$. We define the \deff{antisymmetrizer} $\asym_{x_1,\ldots,x_n}$ to be 
\begin{equation*}
	\asym_{x_1,\ldots,x_n} F(x_{1},\ldots,x_{n}) \coloneq \sum_{\sigma \in \mathfrak{S}_{n}}\sgn(\sigma) F(x_{\sigma(1)},\ldots,x_{\sigma(n)}).
\end{equation*}
The antisymmetrizer is needed to define the modified Robbins polynomials, which are one of the central objects of this paper.

\begin{dfn}
	Let $\boldk=(k_{1},\ldots,k_{n})$ be a sequence of integers. We define the \deff{modified Robbins polynomials} $R_{\boldk}^{\ast}(x_{1},\ldots,x_{n};u,v,w)$ to be
	\begin{equation*}
		\frac{\asym_{x_{1},\ldots,x_{n}}\left[\prod\limits_{1\le i<j\le n}(ux_{i}x_{j}+v+wx_{i})\prod\limits_{i=1}^{n}x_{i}^{k_{i}}\right]}{\prod\limits_{1\le i<j\le n}(x_{j}-x_{i})}.
	\end{equation*}
\end{dfn}

We can recover the Schur polynomials from the modified Robbins polynomials in several ways. We state two of them in the following proposition; in this proposition, we set $s_\lambda(x_1,\ldots,x_n)=0$ if $\lambda$ is not weakly decreasing. Both assertions are a direct consequence of the bialternant formula for Schur polynomials, which states---in an uncommon formulation using the antisymmetrizer---that
\begin{equation*}
	s_{(k_n,\dots,k_1)}(x_1,\dots,x_n)=\frac{\asym_{x_{1},\ldots,x_{n}}\left[\prod\limits_{i=1}^{n}x_{i}^{k_{i}+i-1}\right]}{\prod\limits_{1\le i<j\le n}(x_{j}-x_{i})}.
\end{equation*}

\begin{prop}
	\label{prop:RobbinsToSchur}
	Let $(k_{1},\ldots,k_{n})$ be a strictly increasing sequence of integers. Then it holds that
	\begin{enumerate}[label=(\roman*)]
		\item\label{prop:RobbinsToSchur1}
		$\begin{aligned}[t]
			R_{(k_{1},\ldots,k_{n})}^{\ast}(x_{1},&\ldots,x_{n};0,0,1)\\
			&= s_{(k_n-n+1,k_{n-1}-n+3,\dots,k_2+n-3,k_1+n-1)}(x_{1},\ldots,x_{n})\\
			&=s_{(k_n-2(n-1),k_{n-1}-2(n-2),\dots,k_2-2,k_1)}(x_{1},\ldots,x_{n}) \prod_{i=1}^{n} x_i^{n-1};
		\end{aligned}$		
		\item\label{prop:RobbinsToSchur2}
		$\begin{aligned}[t] R_{(k_{1},\ldots,k_{n})}^{\ast}(x_{1},&\ldots,x_{n};1,1,0)\\
			&= s_{(k_n-n+1,k_{n-1}-n+2,\dots,k_2-1,k_1)}(x_{1},\ldots,x_{n}) \prod\limits_{1 \le i < j \le n} (1+x_i x_j).
		\end{aligned}$
	\end{enumerate}
\end{prop}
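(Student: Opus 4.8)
The plan is to substitute the prescribed parameter values into the definition of $R^\ast_{\boldk}$, simplify the leading product over pairs, and then read off the result from the bialternant formula stated just above the proposition, being careful about the index shifts and about the reversal $(k_n,\dots,k_1)$ that appears in its antisymmetrizer form.

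First I would do part~\ref{prop:RobbinsToSchur1}. Setting $(u,v,w)=(0,0,1)$ makes each factor $ux_ix_j+v+wx_i$ equal to $x_i$, so $\prod_{1\le i<j\le n}(ux_ix_j+v+wx_i)=\prod_{1\le i<j\le n}x_i=\prod_{i=1}^{n}x_i^{\,n-i}$ because $x_i$ occurs once for each $j>i$. Absorbing this into $\prod_i x_i^{k_i}$ turns the numerator into $\asym_{x_1,\dots,x_n}\bigl[\prod_{i=1}^n x_i^{\,k_i+n-i}\bigr]$. Comparing the exponents $k_i+n-i$ with the exponents $k'_i+i-1$ in the bialternant formula forces $k'_i=k_i+n-2i+1$, and dividing by the Vandermonde then yields $s_{(k'_n,\dots,k'_1)}=s_{(k_n-n+1,\,k_{n-1}-n+3,\,\dots,\,k_1+n-1)}$, which is the first claimed equality. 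For the second, I would pull the monomial factor $\prod_i x_i^{\,n-1}$ out, using that multiplying a bialternant by $\prod_i x_i^{\,c}$ shifts the indexing sequence by $(c,\dots,c)$, and then check that $(k_n-n+1,\dots,k_1+n-1)$ is exactly $(k_n-2(n-1),\dots,k_2-2,k_1)$ shifted by $(n-1,\dots,n-1)$. One should also observe that the convention $s_\lambda=0$ for non-weakly-decreasing $\lambda$ is consistent here: the sequence $(k'_n,\dots,k'_1)$ fails to be weakly decreasing precisely when $k_i-k_{i-1}=1$ for some $i$, and in that case the exponents $k_i+n-i$ and $k_{i-1}+n-(i-1)$ coincide, so the antisymmetrizer vanishes and both sides are $0$.

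For part~\ref{prop:RobbinsToSchur2}, setting $(u,v,w)=(1,1,0)$ makes $\prod_{1\le i<j\le n}(ux_ix_j+v+wx_i)=\prod_{1\le i<j\le n}(1+x_ix_j)$, which is \emph{symmetric} in $x_1,\dots,x_n$ and hence may be pulled out of the antisymmetrizer. What is left is $\prod_{1\le i<j\le n}(1+x_ix_j)$ times $\asym_{x_1,\dots,x_n}\bigl[\prod_i x_i^{k_i}\bigr]\big/\prod_{1\le i<j\le n}(x_j-x_i)$; matching $k_i$ with $k'_i+i-1$ gives $k'_i=k_i-i+1$, so this quotient is $s_{(k_n-n+1,\,k_{n-1}-n+2,\,\dots,\,k_2-1,\,k_1)}$. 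Here the strict increase of $\boldk$ already guarantees $k'_i-k'_{i-1}=(k_i-k_{i-1})-1\ge 0$, so the Schur index is a genuine partition and no degenerate case occurs. The argument has no real obstacle; the only thing that needs care is the bookkeeping of the two index shifts together with the reversal in the bialternant, and the remark in part~\ref{prop:RobbinsToSchur1} that the degenerate sequences are exactly those on which both sides vanish.
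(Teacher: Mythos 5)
Your proof is correct and follows exactly the route the paper intends: the paper simply asserts that both parts are a direct consequence of the antisymmetrizer form of the bialternant formula, and you carry out that substitution and exponent bookkeeping explicitly (including the correct observation that, under strict increase of $\boldk$, the only degenerate case in part \ref{prop:RobbinsToSchur1} is $k_i-k_{i-1}=1$, where the colliding exponents make the antisymmetrizer vanish in agreement with the convention $s_\lambda=0$). Nothing further is needed.
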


The $K$-theoretic analogues of Schur polynomials are known as \deff{Grothendieck polynomials}. Modified Robbins polynomials generalize symmetric Grothendieck polynomials $G_{\lambda} (x_1,\dots,x_n;\beta)$ as can be seen from their bialternant formula \cite{IN13}:
\begin{equation*}
	G_{\lambda} (x_1,\dots,x_n;\beta) = \frac{\det\limits_{1 \le i,j \le n} \left(x_i^{\lambda_j+n-j} (1+ \beta x_i)^{j-1}\right)}{\prod\limits_{1\le i < j \le n} (x_i - x_j)}.
\end{equation*}
Symmetric Grothendieck polynomials $G_{\lambda} (x_1,\dots,x_n;\beta)$ are stable Grothendieck polynomials indexed by Grassmannian permutations.

\begin{prop}
	Let $(k_{1},\ldots,k_{n})$ be a strictly increasing sequence of integers. Then
	\begin{equation*}
		R_{\lambda}^{\ast}(x_{1},\ldots,x_{n};0,1,\beta) = G_{(k_n-n+1,k_{n-1}-n+2,\ldots,k_2-1,k_1)} (x_1,\dots,x_n;\beta).
	\end{equation*}
\end{prop}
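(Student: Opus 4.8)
The plan is to reduce the claim to a direct comparison of the two bialternant formulas, exploiting the fact that both the modified Robbins polynomial at $(u,v,w)=(0,1,\beta)$ and the symmetric Grothendieck polynomial are expressed via an antisymmetrizer (equivalently a determinant) divided by the same Vandermonde. First I would write out $R^{\ast}_{(k_1,\dots,k_n)}(x_1,\dots,x_n;0,1,\beta)$ explicitly from the definition: the inner product $\prod_{1\le i<j\le n}(u x_i x_j + v + w x_i)$ specializes to $\prod_{1\le i<j\le n}(1 + \beta x_i)$, so the numerator becomes $\asym_{x_1,\dots,x_n}\bigl[\prod_{1\le i<j\le n}(1+\beta x_i)\,\prod_{i=1}^n x_i^{k_i}\bigr]$.

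The key observation is that $\prod_{1\le i<j\le n}(1+\beta x_i)$ only depends on each $x_i$ through the exponent counting how many $j>i$ there are, namely $\prod_{i=1}^{n}(1+\beta x_i)^{n-i}$. Hence the bracketed expression factors as $\prod_{i=1}^{n} x_i^{k_i}(1+\beta x_i)^{n-i}$, and the numerator of $R^{\ast}$ is exactly $\det_{1\le i,j\le n}\!\bigl(x_i^{k_j}(1+\beta x_i)^{n-j}\bigr)$ (up to the standard sign identification between the antisymmetrizer of a product of single-variable functions and a determinant). On the Grothendieck side, I would take the stated bialternant formula and perform the substitution $\lambda_j = k_{n+1-j} - (n-j) - (n - (n+1-j)+1) + \dots$; more cleanly, I would match column by column: in $G_\mu$ the $j$-th column exponent of $x_i$ is $\mu_j + n - j$ with a factor $(1+\beta x_i)^{j-1}$, so after reversing the column order $j \mapsto n+1-j$ this becomes $\mu_{n+1-j} + j - 1$ with factor $(1+\beta x_i)^{n-j}$. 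Setting $\mu = (k_n-n+1, k_{n-1}-n+2, \dots, k_2-1, k_1)$ gives $\mu_{n+1-j} = k_j - (j-1)$, so the exponent $\mu_{n+1-j} + j - 1 = k_j$, matching $R^{\ast}$ exactly; the column reversal contributes a sign $(-1)^{\binom{n}{2}}$ that is absorbed by the corresponding sign in relating $\prod_{i<j}(x_i-x_j)$ to $\prod_{i<j}(x_j-x_i)$ in the two denominators.

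I would then check that the index shift sends a strictly increasing $(k_1,\dots,k_n)$ to a weakly decreasing $\mu$ (indeed $\mu_j - \mu_{j+1} = k_{n+1-j} - k_{n-j} - 1 \ge 0$ since the $k_i$ are strictly increasing integers), and that $\mu_n = k_1 \ge 0$ is needed only to interpret $\mu$ as a genuine partition — but since both sides are defined by the same bialternant, the identity of rational functions holds regardless. This is essentially a bookkeeping argument. The only mild subtlety — and the single point I would spell out carefully rather than wave through — is the precise sign and ordering convention relating $\asym_{x_1,\dots,x_n}[\prod_i f_i(x_i)]$ to $\det(f_j(x_i))$ together with the two differing Vandermonde conventions ($\prod(x_j - x_i)$ for $R^{\ast}$ versus $\prod(x_i - x_j)$ for $G_\lambda$); once one fixes that the two $(-1)^{\binom{n}{2}}$ factors cancel, the identity falls out. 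I do not anticipate any genuine obstacle here — Proposition~\ref{prop:RobbinsToSchur}\ref{prop:RobbinsToSchur2} already establishes the $\beta \to 0$ (Schur) case by exactly this mechanism, and the present statement is the same computation carried out with the extra $(1+\beta x_i)$ factors left intact.
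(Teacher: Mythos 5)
Your proposal is correct and follows exactly the route the paper intends: the proposition is stated there without a separate proof, as a direct consequence of the bialternant formula for $G_\lambda$, and your computation---rewriting $\prod_{1\le i<j\le n}(1+\beta x_i)$ as $\prod_{i=1}^n(1+\beta x_i)^{n-i}$, identifying the antisymmetrizer with $\det\bigl(x_i^{k_j}(1+\beta x_i)^{n-j}\bigr)$, reversing columns, and cancelling the two $(-1)^{\binom{n}{2}}$ signs coming from the column reversal and the opposite Vandermonde conventions---is precisely that bookkeeping. Your check that $\mu_j-\mu_{j+1}=k_{n+1-j}-k_{n-j}-1\ge 0$ is also correct, so nothing essential is missing.
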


Theorem~\ref{thm:Robbins_Littlewood} establishes a closed formula for the sum of all modified Robbins polynomials~$R^{\ast}_{(k_{1},\ldots,k_{n})}(x_{1},\ldots,x_{n};1,1,w)$. There are several other identities in the literature reminiscent of \eqref{eq:Robbins_Littlewood}. In \cite{BWZJ15}, Betea, Wheeler and Zinn-Justin proved a Littlewood-type identity for Hall--Littlewood polynomials which had earlier been conjectured in \cite{BW16}. Later, this identity was generalized by introducing an additional parameter \cite{WZJ16}. Gavrilova \cite{Gav23} established another generalization for \deff{fully inhomogeneous spin Hall--Littlewood symmetric rational functions}:

The fully inhomogeneous spin Hall--Littlewood symmetric rational functions~$ \F_{\lambda}(u_1,\allowbreak \dots,\allowbreak u_n)$ were introduced in \cite{BP18} and admit the following antisymmetrizer formula
\begin{equation*}
	\frac{\asym_{u_{1},\ldots,u_{n}}\left[\prod\limits_{1\le i<j\le n}(u_i-q u_j)\prod\limits_{i=1}^{n} \left(\frac{1-q}{1-s_{\lambda_i} \xi_{\lambda_i} u_i} \prod\limits_{j=0}^{\lambda_i-1} \frac{\xi_j u_i - s_j}{1 - \xi_j s_j u_i} \right)\right]}{\prod\limits_{1\le i<j\le n}(u_{i}-u_{j})},
\end{equation*}
depending on weakly decreasing sequences $\lambda=(\lambda_1, \dots, \lambda_n)$ of nonnegative integers, a parameter $q$ and inhomogeneities $\xi_x$ and $s_x$ for nonnegative integers~$x$.

For $u=v$, the modified Robbins polynomials are---up to an unimportant factor---a special case of the fully inhomogeneous spin Hall--Littlewood symmetric rational functions: if we set $\xi_x=1$ and $s_x=q^{-\half}$,
and map $u_i \mapsto \frac{1 + q^{\half} x_i}{q^{\half} + x_i}$ for all $1 \le i \le n$, we transform $\F_{\lambda}(u_1,\dots,u_n)$ into 
\begin{equation*}
	(-1)^{\binom{n+1}{2}}
	q^{\binom{n+1}{2}/2} \prod_{i=1}^{n} (q^{\half} + x_i) R_{\boldk}^{\ast}(x_{1},\ldots,x_{n};1,1,q^{\half}+q^{-\half})\end{equation*}
where $\boldk$ is the reverse sequence of $\lambda$.

In all cases, however, these generalized Littlewood identities for Hall--Littlewood polynomials and their generalizations are not of the classical type as they either sum over all partitions with even parts or over all partitions whose conjugates have even parts. Although, in the Schur case, the classical Littlewood identity follows from these variants via a Pieri rule, this does not appear to be true for the more general inhomogeneous spin Hall--Littlewood symmetric rational functions, due to the nature of the Pieri rules available for the latter. However, we conjecture in Section~\ref{sec:conjecture} a generalization of the Littlewood-type identity established in this paper for fully inhomogeneous spin Hall--Littlewood symmetric rational functions. This generalization would imply a new Littlewood identity for Hall--Littlewood polynomials of the classical type, see \eqref{eq:new_Hall_Littlewood}. On the other hand, the Cauchy-type identity accompanying our Littlewood-type identity follows from the Cauchy identity provided in \cite{P21} by the specialization given above.

\section{Down-arrowed monotone triangles}
\label{sec:down}

In this section, we introduce down-arrowed monotone triangles (DAMTs), which are a combinatorial model for modified Robbins polynomials. They are defined as decorated Gelfand--Tsetlin patterns.

A \deff{Gelfand--Tsetlin pattern} with $n$ rows is a triangular array of integers
\begin{equation*}
	\begin{array}{ccccccccc}
		&&&& a_{1,1} \\
		&&& a_{2,1} && a_{2,2} \\
		&& \iddots && \iddots\ddots &&\ddots\\
		&a_{n-1,1} && \cdots && \cdots && a_{n-1,n-1} \\
		a_{n,1} && a_{n,2} && \cdots && a_{n,n-1} && a_{n,n}
	\end{array}
\end{equation*}
with weakly increasing entries along $\nearrow$-diagonals and $\searrow$-diagonals.
Gelfand--Tsetlin patterns with $n$ rows are in bijection with semistandard Young tableaux with entries less than or equal to $n$ \cite[Section~7.10]{Sta99} as follows: There are $a_{1,1}$ $1$s in the first row of the Young tableau. Then, for each $i \in \{ 2,\dots, n\}$, the entries equal to $i$ form the skew shape $(a_{i,i},\dots,a_{i,1})/(a_{i-1,i-1},\dots,a_{i-1,1})$.

The \deff{Schur polynomials} $s_{(\lambda_1,\dots,\lambda_m)}(x_1,\ldots,x_n)$ are combinatorially defined as the generating functions of semistandard Young tableaux of shape~$(\lambda_1,\dots,\lambda_m)$ with entries less than or equal to $n$ with respect to the weight $\prod_{i=1}^{n} x_i\textsuperscript{$\# i$'s}$. By assigning the weight
\begin{equation*}
	\prod_{i=1}^n x_i^{\sum_{j=1}^{i} a_{i,j}-\sum_{j=1}^{i-1} a_{i-1,j}}
\end{equation*}
to each Gelfand--Tsetlin pattern, we see that the previous bijection is weight-preserving and that $s_{(\lambda_1,\dots,\lambda_m)}(x_1,\ldots,x_n)$ is also the generating function of Gelfand--Tsetlin patterns with $n$ rows and bottom row $(\lambda_m,\dots,\lambda_1)$, adding leading zeroes if necessary.

\begin{exm}
	We present the following Gelfand--Testlin pattern with four rows and weight~$x_1^3  x_2^3 \allowbreak x_3^4  x_4^3$ and its corresponding semistandard Young tableau:
	\begin{equation*}
		\begin{tikzpicture}[baseline=(current bounding box.center),scale=0.65]
			\node at (0,0) {$1$};
			\node at (2,0) {$2$};
			\node at (4,0) {$5$};
			\node at (6,0) {$5$};
			\node at (1,1) {$2$};
			\node at (3,1) {$3$};
			\node at (5,1) {$5$};
			\node at (2,2) {$2$};
			\node at (4,2) {$4$};
			\node at (3,3) {$3$};
		\end{tikzpicture}
		\longleftrightarrow\quad
		\ytableaushort{11123,22344,33,4}
	\end{equation*}
\end{exm}

\deff{Monotone triangles} are Gelfand--Tsetlin patterns with strictly increasing rows. It is well known that monotone triangles with bottom row $(1,2,\dots,n)$ are in bijection with ASMs of order~$n$ \cite{MRR83}.

\begin{exm}
	The following matrix on the left is an ASM of order~$4$. It corresponds to the monotone triangle on the right.
	
	We illustrate the bijection with the matrix in the middle: To each entry of an ASM of order~$n$, we add the entries in the same column above. This results in rows of $0$s and $1$s, where the $i$th row comprises exactly $i$ $1$s for all $1 \le i \le n$. Recording the positions of these $1$s for every row finally yields a monotone triangle.
	\begin{equation*}
		\begin{pmatrix}
			0 & 1 & 0 & 0\\
			0 & 0 & 0 & 1\\ 
			1 &-1 & 1 & 0\\ 
			0 & 1 & 0 & 0
		\end{pmatrix}
		\longleftrightarrow
		\begin{pmatrix}
			0 & 1 & 0 & 0\\
			0 & 1 & 0 & 1\\ 
			1 & 0 & 1 & 1\\ 
			1 & 1 & 1 & 1
		\end{pmatrix}
		\longleftrightarrow
		\begin{tikzpicture}[baseline=(current bounding box.center),scale=0.65]
			\node at (0,0) {$1$};
			\node at (2,0) {$2$};
			\node at (4,0) {$3$};
			\node at (6,0) {$4$};
			\node at (1,1) {$1$};
			\node at (3,1) {$3$};
			\node at (5,1) {$4$};
			\node at (2,2) {$2$};
			\node at (4,2) {$4$};
			\node at (3,3) {$2$};
			
		\end{tikzpicture}
	\end{equation*}
\end{exm}

In order to obtain a combinatorial interpretation of the modified Robbins polynomials, we introduce down-arrowed monotone triangles (DAMTs).
To this end, we distinguish entries that are not in the bottom row in relation to their neighbouring entries in the row below: An entry that is equal to its $\swarrow$-neighbour is called \deff{left-leaning}; conversely, an entry that is equal to its $\searrow$-neighbour is called \deff{right-leaning}. An entry that is neither left-leaning nor right-leaning is called \deff{special}.

\begin{dfn}
	A \deff{down-arrowed monotone triangle} is a monotone triangle where each  entry not in the bottom row is equipped with one of the decorations $\swarrow$, $\downarrow$ or $\searrow$ according to the following rules:
	\begin{itemize}
		\item Every left-leaning entry is decorated with $\swarrow$.
		\item Every right-leaning entry is decorated with $\searrow$.
	\end{itemize}
	An entry equipped with $\downarrow$ is called \deff{central}. The following weight is assigned to a DAMT:
	\begin{equation}\label{eq:weight_DAMT}
		u^{\# \searrow}  v^{\# \swarrow}  w^{\# \downarrow}
		\prod_{i=1}^n x_i^{\sum_{j=1}^{i} a_{i,j}-\sum_{j=1}^{i-1} a_{i-1,j} + \# \searrow \text{ in row~$(i-1)$ } - \# \swarrow \text{ in row~$(i-1)$}}.
	\end{equation}
\end{dfn}

\begin{exm}
	We provide an example of a DAMT with weight $u^3 v w^2 x_1^3 x_2^5 x_3^3 x_4^5$:
	\begin{equation*}
		\begin{tikzpicture}[baseline=(current bounding box.center),scale=0.65]
			\node at (0,0) {$1$};
			\node at (2,0) {$2$};
			\node at (4,0) {$4$};
			\node at (6,0) {$7$};
			\node at (1,1) {$1$};
			\node at (3,1) {$3$};
			\node at (5,1) {$5$};
			\node at (2,2) {$2$};
			\node at (4,2) {$5$};
			\node at (3,3) {$3$};
			
			\node at (.5,.45) {\footnotesize $\swarrow$};
			\node at (3.5,.45) {\footnotesize $\searrow$};
			\node at (5,.45) {\footnotesize $\downarrow$};
			\node at (2,1.45) {\footnotesize $\downarrow$};
			\node at (4.5,1.45) {\footnotesize $\searrow$};
			\node at (3.5,2.45) {\footnotesize $\searrow$};
		\end{tikzpicture}
	\end{equation*}
\end{exm}

It has been shown in \cite[Theorem~3.1]{FS23} that the generating function of all DAMTs with a prescribed bottom row is given by modified Robbins polynomials.

\begin{thm}
	\label{thm:gfun}
	Let $\boldk=(k_{1},\ldots,k_{n})$ be a strictly increasing sequence of integers. The generating function of DAMTs with bottom row~$\boldk$ with respect to the weight~\eqref{eq:weight_DAMT} is given by the modified Robbins polynomial $R_{\boldk}^{\ast}(x_{1},\ldots,x_{n};u,v,w)$.
\end{thm}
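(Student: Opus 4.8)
I would prove this by induction on the number $n$ of rows, peeling off the bottom row. For $n=1$ a DAMT with bottom row $(k_1)$ is the single (undecorated) entry $k_1$, of weight $x_1^{k_1}=R^{\ast}_{(k_1)}(x_1;u,v,w)$. For the inductive step, fix a strictly increasing bottom row $\boldk=(k_1,\dots,k_n)$. A DAMT with this bottom row is the same data as: its penultimate row $(l_1,\dots,l_{n-1})$, which is a strictly increasing sequence with $k_j\le l_j\le k_{j+1}$; a DAMT on the top $n-1$ rows with bottom row $(l_1,\dots,l_{n-1})$; and, for each entry $l_j$ of the penultimate row, a decoration, which is forced to be $\swarrow$ if $l_j=k_j$, forced to be $\searrow$ if $l_j=k_{j+1}$, and free among $\swarrow,\downarrow,\searrow$ if $k_j<l_j<k_{j+1}$. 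The weight~\eqref{eq:weight_DAMT} behaves well under this decomposition: the decorations of row $n-1$ affect only the exponent of $x_n$, which equals $\sum_{j=1}^{n}k_j-\sum_{j=1}^{n-1}l_j+\#\searrow-\#\swarrow$ (counted in row $n-1$), while the remaining part of the weight—after separating off the $u,v,w$ and $x_n$ contributions of those decorations—is exactly the weight of the smaller DAMT.

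Summing over the decorations of row $n-1$ (which factors over $j$) and invoking the inductive hypothesis $R^{\ast}_{(l_1,\dots,l_{n-1})}=$ (generating function of DAMTs with bottom row $(l_1,\dots,l_{n-1})$), the theorem reduces to the algebraic identity
\[
R^{\ast}_{(k_1,\dots,k_n)}(x_1,\dots,x_n;u,v,w)=x_n^{k_1+\dots+k_n}\sum_{\substack{l_1<\dots<l_{n-1}\\ k_j\le l_j\le k_{j+1}}}\ \prod_{j=1}^{n-1}\kappa_j(l_j)\ R^{\ast}_{(l_1,\dots,l_{n-1})}(x_1,\dots,x_{n-1};u,v,w),
\]
where $\kappa_j(l)=v\,x_n^{-l-1}$ for $l=k_j$, $\kappa_j(l)=u\,x_n^{-l+1}$ for $l=k_{j+1}$, and $\kappa_j(l)=v\,x_n^{-l-1}+w\,x_n^{-l}+u\,x_n^{-l+1}$ for $k_j<l<k_{j+1}$. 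To establish it I would substitute the defining antisymmetrizer formula for each $R^{\ast}_{(l_1,\dots,l_{n-1})}$ and push the sum over $(l_1,\dots,l_{n-1})$ inside $\asym_{x_1,\dots,x_{n-1}}$ (legitimate since the $\kappa_j$ do not involve $x_1,\dots,x_{n-1}$). Each one-variable sum $\sum_{l=k_j}^{k_{j+1}}\kappa_j(l)\,x_j^{l}$ is a finite geometric series which telescopes to the explicit rational function
\[
\phi_j(x_j)=\frac{x_j^{k_j}x_n^{-k_j}(u x_j x_n+v+w x_j)-x_j^{k_{j+1}}x_n^{-k_{j+1}}(u x_j x_n+v+w x_n)}{x_n-x_j}.
\]

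The crux, and the step I expect to be the main obstacle, is reconciling the strictness constraint $l_1<\dots<l_{n-1}$ with the antisymmetrizer. Dropping strictness turns the constrained sum into a product, $\prod_{j=1}^{n-1}\phi_j(x_j)$; the surplus tuples are exactly those with $l_j=l_{j+1}=k_{j+1}$ for one or more $j$, and because $\prod_{1\le a<b\le n-1}(u x_a x_b+v+w x_a)$ is \emph{not} symmetric, these surplus terms do not cancel termwise under $\asym_{x_1,\dots,x_{n-1}}$. I would control them by an explicit inclusion--exclusion over the set of indices $j$ with $l_j=l_{j+1}$ (the events for adjacent indices are mutually exclusive, so this stays finite and transparent), or, alternatively, by following Fischer's summation-operator calculus for monotone triangles, writing the strict interlacing sum as a composition of shift and summation operators applied to $\prod_i x_i^{k_i}$, in which the strictness is encoded automatically. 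After this bookkeeping one is left with $\asym_{x_1,\dots,x_{n-1}}$ of $\prod_{1\le a<b\le n-1}(u x_a x_b+v+w x_a)$ against a sum of monomials which, by a Laplace-type cofactor expansion in the new variable $x_n$, reassembles into $\asym_{x_1,\dots,x_n}\!\bigl[\prod_{1\le i<j\le n}(u x_i x_j+v+w x_i)\prod_{i=1}^{n}x_i^{k_i}\bigr]$ divided by $\prod_{1\le i<j\le n}(x_j-x_i)$ --- that is, $R^{\ast}_{(k_1,\dots,k_n)}$ by definition. Everything except the strictness reconciliation is routine.

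As an alternative avoiding induction, one could expand the defining antisymmetrizer of $R^{\ast}_{\boldk}$ by distributing each factor $u x_i x_j+v+w x_i$ into its three monomials, obtaining a signed sum indexed by pairs (a permutation of the variables, a choice of one monomial per factor), and then construct a weight-preserving sign-reversing involution that cancels the spurious terms and matches the survivors bijectively with DAMTs; this recasts the theorem as a purely combinatorial statement but at the cost of a subtler involution, and I would expect the row-peeling induction above to be the more efficient of the two.
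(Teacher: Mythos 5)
The paper itself contains no proof of this statement: it is imported verbatim from \cite[Theorem~3.1]{FS23}, so there is no in-paper argument to compare against and your proposal has to stand on its own. Its combinatorial part does: peeling off the bottom row, the forced decoration $\swarrow$ when $l_j=k_j$, forced $\searrow$ when $l_j=k_{j+1}$, free choice otherwise, the bookkeeping of the exponent of $x_n$ in \eqref{eq:weight_DAMT}, the coefficients $\kappa_j$, and the telescoped one-variable sums $\phi_j$ are all correct, and by induction the theorem is indeed equivalent to the assertion that the antisymmetrizer formula for $R^{\ast}$ satisfies the bottom-row recursion you write down (the $n=2$ case of that recursion is immediate).

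The problem is that the decisive step — proving that $R^{\ast}_{\boldk}$ does satisfy this recursion, i.e.\ reconciling the strictness $l_1<\dots<l_{n-1}$ with the antisymmetrizer — is exactly where you stop. You are right that the surplus tuples with $l_j=l_{j+1}=k_{j+1}$ do not cancel: for instance $R^{\ast}_{(k,k)}(x_1,x_2)=-w\,(x_1x_2)^k\neq 0$, so genuine correction terms must be tracked. But neither of your two remedies is carried out. The inclusion--exclusion (over subsets of non-adjacent coincidence positions, which is indeed finite since $\boldk$ is strictly increasing) leaves you with an alternating sum of $(n-1)$-variable antisymmetrizations of products of $\phi_a$'s and doubled monomials $x_j^{k_{j+1}}x_{j+1}^{k_{j+1}}$, and showing that this alternating sum, combined with an expansion of $\asym_{x_1,\ldots,x_n}$ along the distinguished variable, reassembles into $R^{\ast}_{(k_1,\ldots,k_n)}$ is the actual substance of the theorem, not routine bookkeeping; the alternative route via Fischer's shift/summation-operator calculus is likewise only named, not executed. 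So what you have is a sound and standard plan (essentially the inductive strategy used in the literature for such formulas) with its central identity unproven; to turn it into a proof you must carry out one of the two strategies in full detail.
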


We may also define a weight function on monotone triangles in order to obtain the same generating function $R_{\boldk}^{\ast}(1,\ldots,1;u,v,w)$ (and thereby avoid the necessity of decorations), see \cite{FS23}. In this case, the weight is not monomial anymore; however, it reduces to $1$ when setting $u=v=1$, $w=-1$ and $x_1=\dots=x_n=1$. Consequently, $R_{\boldk}^{\ast}(1,\ldots,1;1,1,-1)$ yields the number of (undecorated) monotone triangles with bottom row~$\boldk$. 

More precisely, let $M$ be a monotone triangle with $n$ rows. We denote the number of left-leaning and right-leaning entries in row~$i$ by $l_i(M)$ and $r_i(M)$, respectively. Analogously, we denote the number of special entries in row~$i$ by $s_i(M)$. The number of all left-leaning and right-leaning entries is denoted by $l(M)$ and $r(M)$, respectively.

We define the following weight on any monotone triangle~$M$:
\begin{equation}\label{eq:weight_MT}
	u^{r(M)}  v^{l(M)}
	\prod_{i=1}^n x_i^{\sum_{j=1}^{i} a_{i,j}-\sum_{j=1}^{i-1} a_{i-1,j} + r_{i-1}(M) - l_{i-1}(M)} \left( w+ u x_i + v x_i^{-1} \right)^{s_{i-1}(M)}.
\end{equation}

We obtain DAMTs from monotone triangles by decorating the entries that are not in the bottom row. While the decorations for left-leaning and right-leaning entries are forced, we have three different choices for every special entry. This observation explains that we obtain the weight~\eqref{eq:weight_DAMT} by expanding \eqref{eq:weight_MT} into monomials. Since \eqref{eq:weight_MT} equals obviously $1$ by setting $x_1=\dots=x_n=1$, $u=v=1$ and $w=-1$, it follows that $R_{\boldk}^{\ast}(1,\ldots,1;1,1,-1)$ yields the number of monotone triangles with bottom row~$\boldk$.

In \cite{FS23}, Schreier-Aigner and the first author also considered \deff{arrowed monotone triangles} and \deff{extended arrowed monotone triangles}; these are further variants of decorated Gelfand--Tsetlin patterns.

An arrowed monotone triangle is a monotone triangle where each entry is decorated with one of the decorations $\nwarrow$, $\nearrow$ or $\nenwarrow$ according to the following rules
\begin{itemize}
	\item If an entry coincides with its $\nwarrow$-neighbour, then it is decorated with $\nearrow$.
	\item If an entry coincides with its $\nearrow$-neighbour, then it is decorated with $\nwarrow$.
\end{itemize}
An extended arrowed monotone triangle is a triangular array of integers that are not necessarily weakly increasing along rows and where the entries are decorated with $\nwarrow$, $\nearrow$, $\nenwarrow$ or $\emptyset$. We omit the rules for the decorations here, see \cite{FS23} for further details as well as further generalizations. Arrowed monotone triangles are a proper subset of extended arrowed monotone triangles: we recover the notion of arrowed monotone triangles by restricting extended arrowed monotone triangles to strictly increasing bottom rows and omitting $\emptyset$ as decoration.

The generating function of (extended) arrowed monotone triangles is given by the ordinary \deff{Robbins polynomials} $R_{\boldk} (x_{1},\ldots,x_{n};t,u,v,w)$. Modified Robbins polynomials and (nonmodified) Robbins polynomials fulfill the following relation:
\begin{equation*}
	R_{\boldk}^{\ast}(x_{1},\ldots,x_{n};u,v,w) = \left. \frac{R_{\boldk}(x_{1},\ldots,x_{n};t,u,v,w)}{\prod_{i=1}^{n} \left(t+u x_i + \frac{v}{x_i}   + w \right)}\right|_{t=0}.
\end{equation*}
Robbins polynomials generalize Hall--Littlewood polynomials~$P$. In fact, let $\lambda$ be an integer partition and denote the multiplicity of an entry~$i$ by $m_i(\lambda)$, then
\begin{equation*}
	P_{\lambda}(x_1,\dots,x_n;q) = (-1)^{\binom{n}{2}} \prod_{r \ge 0}  \frac{1}{(q;q)_{m_r(\lambda)}} R_{\lambda}(x_{1},\ldots,x_{n};-q,0,0,1),
\end{equation*} 
where
\begin{equation*}
	(a;q)_k \coloneq \prod_{i=0}^{k-1} (1-a q^i)
\end{equation*}
denotes the standard \deff{$q$-Pochhammer symbol}.

The feature of Robbins polynomials is that they admit a combinatorial interpretation for arbitrary integer sequences~$\boldk$ through extended arrowed monotone triangles. However, this combinatorial interpretation involves signs in contrast to arrowed monotone triangles.

\section{Proof of Theorem~\ref{thm:Robbins_Littlewood}}
\label{sec:mainproof}

Littlewood-type identities are often proved by what is occasionally referred to as the \emph{Izergin--Korepin technique}: identifying a set of properties that uniquely determines one side of the equation and subsequently showing that the other side satisfies these properties as well.

We prove Theorem~\ref{thm:Robbins_Littlewood} in a different and elementary way:  The antisymmetrizer in the definition of modified Robbins polynomials enables us to establish a recursion relation for these polynomials. We then show directly that the right-hand side of \eqref{eq:Robbins_Littlewood} fulfills the same recursion as the left hand side. In addition, a sketch of a second proof using the Izergin--Korepin technique is provided as well. At the end of this section, we show how to derive Corollary~\ref{cor:Littlewood}.

The Littlewood-type identity for modified Robbins polynomials  involves the notion of Pfaffians which we recall in the following before proving Theorem~\ref{thm:Robbins_Littlewood}:

For every triangular array $A=(a_{i,j})_{1 \le i < j \le 2n}$, the Pfaffian $\pf(A)$ is defined as
\begin{equation*}
	\sum_{\{(i_1,j_1),\dots,(i_n,j_n)\}} \sgn(i_1 j_1 \dots i_n j_n) a_{i_1,j_1} \cdots a_{i_n,j_n},
\end{equation*}
where we sum over all perfect matchings $\{(i_1,j_1),\dots,(i_n,j_n)\}$ of $\{1,\dots,2n\}$ such that $i_1 < \dots < i_n$ and $i_k < j_k$ for all $1 \le k \le n$.

Every triangular array $A=(a_{i,j})_{1 \le i < j \le 2n}$ is the upper triangular part of a uniquely determined $2n \times 2n$ skew-symmetric matrix $M$, which we call the \emph{skew-symmetric completion} of $A$. A matrix $M=(m_{i,j})_{1 \le i , j \le 2n}$ is skew-symmetric if $m_{i,j} = - m_{j,i}$ for all $1 \le i,j \le 2n$; in particular, all diagonal entries are $0$. It is well known that $\pf(A)^2=\det(M)$. On that account, we will repeatedly change perspectives when dealing with Pfaffians as we will consider either the triangular array or the skew-symmetric matrix that underlies the Pfaffian.

We recall the main methods for manipulating Pfaffians used in the present paper:
\begin{itemize}
	\item Regarding elementary row or column operations on the underlying  matrix, we perform corresponding row and column operations at the same time. Thus, the resulting matrix remains skew-symmetric. In particular, simultaneously swapping corresponding rows and columns multiplies the Pfaffian by $-1$.
	\item Expanding the Pfaffian along the last column yields
	\begin{equation*}
		\pf_{1 \le i < j \le 2n} (a_{i,j}) = \sum_{k=1}^{2n-1} (-1)^{k+1} a_{k,2n} \pf_{\myatop{1 \le i < j \le 2n}{i,j \neq k}} (a_{i,j}).
	\end{equation*}
	\item For all $c_1,\dots,c_{2n}$, it holds that
	\begin{equation}\label{eq:linearity_pfaffian}
		\pf_{1 \le i < j \le 2n} (c_i c_j a_{i,j}) = \prod_{i=1}^{2n} c_i \pf_{1 \le i < j \le 2n} (a_{i,j}).
	\end{equation}
	Thus, for a given~$i$, setting $c_i=c$ and all other $c_1,\dots,c_{i-1},c_{i+1},\dots,c_{2n}$ equal to $1$ corresponds to multiplying the $i$th row and the $i$th column of the underlying matrix by $c$.
\end{itemize}

In preparation for the proof of Theorem~\ref{thm:Robbins_Littlewood}, we show the following lemma which will be used for certain row and column operations:

\begin{lem}\label{lem:MatrixOperationEven}
	The following identities hold:
	\begin{enumerate}[label=(\roman*)]
		\item\label{eq:MatrixOperationEven2} For each positive integer~$n$, it holds
		\begin{equation*}
			\sum_{j=1}^{n} \left(\frac{1+x_j}{x_j} \prod_{\myatop{1 \le p \le n}{p \neq j}} \frac{1-x_j x_p}{x_j -x_p} \right) + (-1)^n \prod_{k=1}^{n} \frac{1}{x_k} = 1.
		\end{equation*}
		\item\label{eq:MatrixOperationEven1} For each positive integer~$n$ and $i$ satisfying $1 \le i \le n$, it holds
		\begin{multline*}
			(-1)^n \sum_{j=1}^{n} \left( \frac{(x_j-x_i)(1+(1+w) x_i x_j)}{(x_i+x_j+w x_i x_j)(1-x_i x_j)} \cdot \frac{1+x_j}{x_j} \prod_{\myatop{1 \le p \le n}{p \neq j}} \frac{1-x_j x_p}{x_j -x_p} \right)\\
			+ \frac{1-x_i}{x_i} \prod\limits_{\myatop{1 \le p \le n}{p \neq i}} \frac{1+ w x_i + x_i x_p}{x_i + x_p + w x_i x_p} = (-1)^n+\prod_{k=1}^{n} \frac{1}{x_k}.
		\end{multline*}
	\end{enumerate}
\end{lem}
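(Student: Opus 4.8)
Both identities are rational function identities in $x_1,\dots,x_n$, so the plan is to verify them by a partial-fraction / residue argument in one chosen variable, say $x_j$, combined with an evaluation at a special point. The key structural observation is that the left-hand side of each identity is, after clearing the common denominator $\prod_{1\le p<q\le n}(x_q-x_p)$, a rational (in fact, for (i), even polynomial) function whose only poles as a function of $x_n$ are simple and located at $x_n=x_p$ ($p<n$) in (i), and additionally at $x_n=1/x_i$ and at $x_n=-\frac{x_i}{1+wx_i}$ (the zero of $x_i+x_n+wx_ix_n$) in (ii); and at $x_n=0$. So I would fix $i$ and $n$, regard both sides as rational functions of $x_j =: y$ for one index $j\ne i$ (or treat the symmetric sum directly), and check that the two sides have the same principal parts at every pole and the same value at one further point.

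\textbf{Part (i).} I would prove this by induction on $n$, or more slickly by the following residue argument. Consider $f(y) := \frac{1+y}{y}\prod_{p\ne j}\frac{1-yx_p}{y-x_p}$ as the $j$-th summand, viewed as a function of $y=x_j$ with the other $x_p$ fixed. The sum $S_n := \sum_{j=1}^n f_j$ (where $f_j$ is $f$ with the role of $x_j$) is a symmetric rational function; I claim $S_n = 1 - (-1)^n\prod_k x_k^{-1}$. One clean route: recognize $\sum_{j=1}^n \frac{1+x_j}{x_j}\prod_{p\ne j}\frac{1-x_jx_p}{x_j-x_p}$ as a Lagrange-interpolation expression. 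Indeed, $\prod_{p\ne j}\frac{t-x_p}{x_j-x_p}$ evaluated cleverly: the function $g(t) = \prod_{p=1}^n\frac{1-tx_p}{?}$ — more precisely, by Lagrange interpolation of the polynomial $t\mapsto \prod_{p=1}^n(1-tx_p)$ (degree $n$) at the nodes $x_1,\dots,x_n$ together with $\infty$, one extracts exactly the combination appearing. Concretely, apply the identity $\sum_{j=1}^n \frac{h(x_j)}{\prod_{p\ne j}(x_j-x_p)} = [\text{leading coeff of }h \text{ if }\deg h=n-1]$ to $h(t) = \frac{1+t}{t}\cdot t\cdot\prod_{p\ne \cdot}$ — the bookkeeping of degrees and of the pole at $t=0$ is the only thing to get right, and it produces the stated constant plus the $(-1)^n\prod x_k^{-1}$ correction from the $t=0$ pole. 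This is the routine but load-bearing computation.

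\textbf{Part (ii): the main obstacle.} This is the substantial case. I would again fix $i$ and treat the left-hand side as a rational function of the variable $y = x_n$ (assuming $n\notin\{i\}$; the case $i=n$ can be isolated or handled by symmetry/limiting). The second term $\frac{1-x_i}{x_i}\prod_{p\ne i}\frac{1+wx_i+x_ix_p}{x_i+x_p+wx_ix_p}$ contributes a simple pole at $y=-\frac{x_i}{1+wx_i}$; the $j=n$ summand of the first sum contributes simple poles at $y=x_i$, at $y=1/x_i$, and at $y=0$; the $j\ne n$ summands contribute simple poles at $y=x_p$. The strategy is: (a) check that the residue at $y=x_p$ ($p<n$, $p\ne i$) vanishes on the left (the pole from summand $j=n$ cancels against summand $j=p$ — this is the standard antisymmetry cancellation and the right side has no such pole); (b) check the residue at $y=x_i$ — here summand $j=n$ has the factor $(x_n-x_i)$ in the numerator killing the apparent pole, so there is nothing to check; (c) check the residue at $y=1/x_i$ and at $y = -x_i/(1+wx_i)$ match (the right side $(-1)^n+\prod x_k^{-1}$ has no poles in $y$, so both residues on the left must vanish — this is where the precise form of the rational kernel is used, and verifying these cancellations between the sum-term and the product-term is the crux); (d) check the residue at $y=0$, and the behavior as $y\to\infty$, against the constant right-hand side; and finally (e) pin down the remaining additive constant by evaluating at one convenient value of $y$, e.g. $y=1$ or $y=x_i$, reducing to part (i) or to a lower-$n$ instance. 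I expect step (c) to be the genuine difficulty: one must show $\operatorname{Res}_{y=1/x_i}$ of the $j=n$ term equals minus a matching contribution, and similarly at the zero of $x_i+y+wx_iy$, and these require carefully expanding the products $\prod_{p\ne n}\frac{1-yx_p}{y-x_p}$ and $\prod_{p\ne i}\frac{1+wx_i+x_ix_p}{x_i+x_p+wx_ix_p}$ at those special points and recognizing that they agree up to the explicit prefactor $\frac{(y-x_i)(1+(1+w)x_iy)}{(x_i+y+wx_iy)(1-x_iy)}\cdot\frac{1+y}{y}$. An alternative that may be cleaner: rewrite the kernel as $\frac{(x_j-x_i)(1+(1+w)x_ix_j)}{(x_i+x_j+wx_ix_j)(1-x_ix_j)} = \frac{1-x_ix_j}{x_i+x_j+wx_ix_j} + \frac{x_ix_j}{1-x_ix_j} - \text{(something simple)}$ — i.e., decompose into partial fractions in the product $x_ix_j$ first, so that the sum over $j$ splits into pieces each of which succumbs to the part-(i)-type Lagrange identity (possibly after replacing $x_p\mapsto$ a Möbius transform of $x_p$). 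Establishing that decomposition and then applying (i) termwise is, I believe, the most economical path and the one I would attempt first; the obstacle is finding the right partial-fraction split so that each resulting sum is literally an instance of (i) or a close variant.
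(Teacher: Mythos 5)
Your treatment of part (i) is essentially sound, and it is a genuinely different route from the paper's: the sum in (i), together with the two correction terms, is the total sum of residues of the rational function $R(t)=\frac{1}{t(1-t)}\prod_{p=1}^{n}\frac{1-tx_p}{t-x_p}$, whose residues at $t=x_j$ are exactly the summands, whose residues at $t=0$ and $t=1$ are $(-1)^n\prod_{k}x_k^{-1}$ and $-1$, and which is $O(t^{-2})$ at infinity; that closes (i) once the bookkeeping you allude to is written out. (The paper instead clears denominators and compares two polynomials of bounded degree through sufficiently many evaluations.)

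For part (ii), however, what you have is a strategy, not a proof, and the steps you defer are precisely the load-bearing ones. The crux is the cancellation of the simple pole at $y=-x_i/(1+wx_i)$ between the $j=n$ summand and the product term $\frac{1-x_i}{x_i}\prod_{p\neq i}\frac{1+wx_i+x_ix_p}{x_i+x_p+wx_ix_p}$; you say this "requires carefully expanding the products and recognizing that they agree" but do not do it, and your alternative partial-fraction split of the kernel is explicitly left unfound. (In the paper this verification appears as the explicit evaluations at $x_s=-x_i/(1+wx_i)$ and at $x_i=-x_s/(1+wx_s)$, each a concrete computation; moreover, even after matching all such evaluations the paper still needs one extra global check, via the specialization $(x_1,1/x_1,x_3,1/x_3,\dots)$ with $-1$ appended for odd $n$, to kill a leftover factor -- a warning that residue/evaluation matching alone may not pin the identity down without an additional argument.) Two further signs the analysis was not carried through: the pole you list at $y=1/x_i$ in the $j=n$ summand does not exist, since the factor $1-yx_i$ in $\prod_{p\neq n}(1-yx_p)$ cancels the denominator $1-x_iy$; and your step (e) needs an induction on $n$ that you never set up -- the evaluation $y=1$ does reduce the left-hand side to the $(n-1)$-variable instance plus $2(-1)^n$, but $y=x_i$ does not (it produces squared factors $\bigl(\frac{1-x_jx_i}{x_j-x_i}\bigr)^2$ and is not a lower-$n$ instance). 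So the plan is viable in principle, but as submitted part (ii) is unproven.
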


\begin{proof}
	Let us first prove \ref{eq:MatrixOperationEven2}. After some straightforward manipulation, the expression in \ref{eq:MatrixOperationEven2} becomes the following polynomial identity:
	\begin{multline}\label{eq:MatrixOperationEven2_1}
		\sum_{j=1}^{n} \left((-1)^j (1+x_j) \prod_{\myatop{1 \le p \le n}{p \neq j}} x_p(1-x_j x_p) \prod_{\myatop{1 \le p < q\le n}{p,q \neq j}} (x_q - x_p) \right)\\
		=  \left( -1 + (-1)^n \prod_{p=1}^{n} x_p \right)  \prod_{1 \le p < q \le n} (x_q - x_p).
	\end{multline}
	Both sides are multivariate polynomials in $x_1,\dots,x_n$ with highest degree at most $n$ in each variable. It is not difficult to see that both sides vanish if we set $x_s=x_t$ for any $s \neq t$. Furthermore, if $x_s=0$, both sides evaluate to 
	\begin{equation*}
		(-1)^s  \prod_{\myatop{1 \le p \le n}{p \neq s}} x_p \prod_{\myatop{1 \le p < q \le n}{p,q \neq s}} (x_q - x_p),
	\end{equation*}
	resulting in $n$ coinciding evaluations for each variable. Thus, both sides are equal up to a multiplicative factor independent of $x_1,\dots,x_n$. Since the coefficient of $x_n^n x_{n-1}^{n-1} \cdots x_1$, that is, the leading coefficient in the lexicographic order for $x_n > \dots > x_1$, equals $(-1)^n$ on both sides, we have shown \eqref{eq:MatrixOperationEven2_1}.
	
	Regarding \ref{eq:MatrixOperationEven1}, we transform the identity into
	\begin{multline}\label{eq:MatrixOperationEven1_1}
		\sum_{j=1}^{n} \left( \vphantom{ \prod_{\myatop{1 \le p < q \le n}{p,q \neq j}}}  (-1)^{j} (1+(1+w) x_i x_j) (1+x_j) (x_j-x_i) \right.\\
		\left. \times \prod_{\myatop{1 \le p \le n}{p \neq j}} x_p \prod_{\myatop{1 \le p \le n}{p \neq i,j}} (1-x_j x_p)  (x_i+x_p+w x_i x_p) \prod_{\myatop{1 \le p < q \le n}{p,q \neq j}} (x_q - x_p) \right) \\
		+ (1-x_i)  \prod\limits_{\myatop{1 \le p \le n}{p \neq i}} x_p (1+ w x_i + x_i x_p) \prod_{1 \le p <q \le n} (x_q -x_p)\\
		= \left(1+ (-1)^n \prod_{k=1}^{n} x_k \right) \prod_{\myatop{1 \le p \le n}{p \neq i}} (x_i+x_p+w x_i x_p) \prod_{1 \le p < q \le n } (x_q -x_p),
	\end{multline}
	both sides of which are multivariate polynomials in $x_1,\dots,x_n$; the highest degree of $x_s$ is at most $n+1$ if $s \neq i$ and $2n-1$ otherwise.
	
	A straightforward computation shows that both sides vanish if we set $x_s=x_t$ for any $s \neq t$. Setting $x_s=0$ yields
	\begin{equation*}
		(-1)^{s-1} x_i^2 \prod_{\myatop{1 \le p \le n}{p \neq i,s}} x_p (x_i+x_p+w x_i x_p) \prod_{\myatop{1 \le p < q \le n}{p,q \neq s}} (x_q - x_p)
	\end{equation*}
	on both sides if $s \neq i$; otherwise, we obtain
	\begin{equation*}
		(-1)^{i-1}  \prod_{\myatop{1 \le p \le n}{p \neq i}} x_p^2 \prod_{\myatop{1 \le p < q \le n}{p,q \neq i}} (x_q - x_p).
	\end{equation*}
	
	Next, we consider the evaluation for $x_s=-\frac{x_i}{1+w x_i}$ if $s \neq i$. The right-hand side of \eqref{eq:MatrixOperationEven1_1} clearly vanishes. The left-hand side, however, becomes
	\begin{multline*}
		\frac{(-1)^{s}}{(1+w x_i)^{n+1}} x_i (x_i-1)(2+ w x_i)(1-x_i+w x_i)(1+x_i+w x_i)\\
		\times \prod_{\myatop{1 \le p \le n}{p \neq s}} x_p \prod_{\myatop{1 \le p \le n}{p \neq i,s}} (1+w x_i+ x_i x_p)  (x_i+x_p+w x_i x_p) \prod_{\myatop{1 \le p < q \le n}{p,q \neq s}} (x_q - x_p)\\
		- \frac{(-1)^s}{(1+w x_i)^{n+1}} x_i (x_i-1) (1-x_i+w x_i)(1+x_i + w x_i)\\
		\times\prod_{\myatop{1 \le p \le n}{p \neq s}}  (x_i+x_p + w x_i x_p)  \prod_{\myatop{1 \le p \le n}{p \neq i,s}} x_p (1+ w x_i + x_i x_p) \prod_{\myatop{1 \le p <q \le n}{p,q \neq s}} (x_q -x_p).
	\end{multline*}
	After omitting common factors, we are left with $(2+ w x_i) x_i -  (x_i+x_i + w x_i^2)$, which equals zero. 
	
	Next, let $x_i=-\frac{x_s}{1+w x_s}$ for any $s \neq i$. It is again obvious that the right-hand side of \eqref{eq:MatrixOperationEven1_1} vanishes. We obtain for the left-hand side
	\begin{multline*}
		\frac{(-1)^{s+i-\left[s<i\right]}}{(1+w x_s)^{2n-1}} x_s^2 (1-x_s) (1+x_s) (2+ w x_s)(1+x_s+w x_s)\\
		\times  \prod_{\myatop{1 \le p \le n}{p \neq i,s}}x_p (1-x_s x_p)  (x_p - x_s)  (x_s+x_p+w x_s x_p) \prod_{\myatop{1 \le p < q \le n}{p,q \neq i,s}} (x_q - x_p)\\
		+ \frac{(-1)^{i-1}}{(1+w x_s)^{2n-1}}  (1+x_s + w x_s)\prod_{\myatop{1 \le p \le n}{p \neq i}} x_p (1-x_s x_p) (x_s+x_p+w x_s x_p) \\
		\times \prod_{\myatop{1 \le p <q \le n}{p,q \neq i}} (x_q -x_p),
	\end{multline*}
	where $\left[\boldsymbol{\cdot}\right]$ denotes the \emph{Iverson bracket}: For any proposition~$P$, $\left[P\right]$ evaluates to $1$ if $P$ is true and to $0$ if $P$ is false. It is again not difficult to see that the previous expression is zero.
	
	In total, we have now $n+1$ matching evaluations for $x_s$ for any $s\neq i$ and $2n-1$ evaluations for $x_i$. As a consequence, we know that the difference between the left-hand side and the right-hand side of \eqref{eq:MatrixOperationEven1_1} is the product of
	\begin{equation}\label{eq:differencepoly}
		\prod_{p=1}^{n} x_p \prod_{1 \le p < q \le n} (x_q-x_p) \prod_{\myatop{1 \le p \le n}{p \neq i}} (x_i + x_p + w x_i x_p)^2
	\end{equation}
	and a rational expression in $w$. In fact, we want to show that this factor equals zero. To this end, we consider the specializations $(x_1,\dots,x_n)=(x_1,\frac{1}{x_1},x_3,\frac{1}{x_3},\dots,x_{n-1},\frac{1}{x_{n-1}})$ if $n$ is even and $(x_1,\dots,x_n)=(x_1,\frac{1}{x_1},x_3,\frac{1}{x_3},\dots,x_{n-2},\frac{1}{x_{n-2}},-1)$ if $n$ is odd for which \eqref{eq:differencepoly} does not vanish. Given these specializations, we will show that the left-hand side of \eqref{eq:MatrixOperationEven1_1} coincides with the right-hand side, which completes the proof of \eqref{eq:MatrixOperationEven1_1}.
	
	First, we consider the left-hand side of \eqref{eq:MatrixOperationEven1_1}. Due to the factor $\prod_{\myatop{1 \le p \le n}{p \neq i,j}} (1-x_j x_p)$, we see that the first sum vanishes unless $j=i+1$ if $i$ is odd and $j=i-1$ if $i$ is even. Let us assume the latter. Thus, it suffices to only keep the $(i-1)$th summand in the first sum. After some manipulation, the left-hand side of \eqref{eq:MatrixOperationEven1_1} reduces to
	\begin{multline*}
		x_i (1+(1+w) x_{i-1} x_i) (1+x_{i-1}) (x_i-x_{i-1})\\
		\times \prod_{\myatop{1 \le p \le n}{p \neq i-1,i}} x_p (1-x_{i-1} x_p)  (x_i+x_p+w x_i x_p) (x_p - x_i) \prod_{\myatop{1 \le p < q \le n}{p,q \neq i-1,i}} (x_q - x_p) \\
		+ x_{i-1} (1-x_i) (1+w x_i+x_{i-1} x_i) (x_i-x_{i-1})\\
		\times\prod\limits_{\myatop{1 \le p \le n}{p \neq i-1,i}} x_p (1+ w x_i + x_i x_p) (x_p-x_i) (x_p-x_{i-1}) \prod_{\myatop{1 \le p <q \le n}{p,q \neq i-1,i}} (x_q -x_p).
	\end{multline*}
	
	The right-hand side of \eqref{eq:MatrixOperationEven1_1}, however, can be recasted as
	\begin{multline*}
		(x_{i-1}+x_i+w x_{i-1} x_i) (x_i-x_{i-1})\\
		\times \left(1+ (-1)^n  \prod_{k=1}^{n} x_k \right)\prod\limits_{\myatop{1 \le p \le n}{p \neq i-1,i}} (x_i+ x_p +w  x_i x_p) (x_p-x_i) (x_p-x_{i-1}) \prod_{\myatop{1 \le p <q \le n}{p,q \neq i-1,i}} (x_q -x_p).
	\end{multline*}
	After cancelling out common factors in the two previous expressions, it remains to show that
	\begin{multline*}
		x_i (1+(1+w) x_{i-1} x_i) (1+x_{i-1}) \prod_{\myatop{1 \le p \le n}{p \neq i-1,i}} x_p (1-x_{i-1} x_p)  (x_i+x_p+w x_i x_p) \\
		+ x_{i-1} (1-x_i) (1+w x_i+x_{i-1} x_i) \prod\limits_{\myatop{1 \le p \le n}{p \neq i-1,i}} x_p (1+ w x_i + x_i x_p) (x_p-x_{i-1}) 
	\end{multline*}
	equals
	\begin{equation*}
		(x_{i-1}+x_i+w x_{i-1} x_i) \left(1+ (-1)^n \prod_{k=1}^{n} x_k \right)\prod\limits_{\myatop{1 \le p \le n}{p \neq i-1,i}} (x_i+ x_p +w  x_i x_p) (x_p-x_{i-1}) 
	\end{equation*}
	for the specializations under consideration. For this purpose, let $k \neq i$ be even and consider the products
	\begin{itemize}
		\item $x_{k-1} (1-x_{i-1} x_{k-1})  (x_i+x_{k-1}+w x_i x_{k-1}) \cdot x_{k} (1-x_{i-1} x_{k})  (x_i+x_{k}+w x_i x_{k})$,
		\item $x_{k-1} (1+ w x_i + x_i x_{k-1}) (x_{k-1}-x_{i-1}) \cdot x_{k} (1+ w x_i + x_i x_{k}) (x_{k}-x_{i-1})$,
		\item $(x_i+ x_{k-1} +w  x_i x_{k-1}) (x_{k-1}-x_{i-1}) \cdot (x_i+ x_{k} +w  x_i x_{k}) (x_{k}-x_{i-1})$,
	\end{itemize}
	all of which evaluate to
	\begin{equation*}
		\frac{1}{x_{k-1}^2 x_i^2} (x_{k-1}-x_{i-1})(1-x_{i-1} x_{k-1})(w+x_{i-1}+x_{k-1})(1+w x_{k-1}+x_{i-1} x_{k-1})
	\end{equation*}
	when setting $x_i \mapsto \frac{1}{x_{i-1}}$ and $x_k \mapsto \frac{1}{x_{k-1}}$. In addition, in the case of $n$ being odd, one can easily check that all of the following expressions
	\begin{itemize}
		\item $x_{n} (1-x_{i-1} x_{n})  (x_i+x_{n}+w x_i x_{n})$,
		\item $x_{n} (1+ w x_i + x_i x_{n}) (x_{n}-x_{i-1})$,
		\item $(x_i+ x_{n} +w  x_i x_{n}) (x_{n}-x_{i-1})$
	\end{itemize}
	are equal to $(1+x_{i-1})(1 - x_i + w x_i)$ if setting $x_n=-1$.
	
	Moreover, we see that
	\begin{equation*}
		x_i (1+(1+w) x_{i-1} x_i) (1+x_{i-1}) + x_{i-1} (1-x_i) (1+w x_i+x_{i-1} x_i)  
	\end{equation*}
	and
	\begin{equation*}
		(x_{i-1}+x_i+w x_{i-1} x_i) \left(1+ (-1)^n \prod_{k=1}^{n} x_k \right)
	\end{equation*}
	both specialize to
	\begin{equation*}
		\frac{2}{x_{i-1}} (1+ w x_{i-1}+ x_{i-1}^2).
	\end{equation*}
	This finally concludes the proof of \eqref{eq:MatrixOperationEven1_1} for even $i$. An analogous argument holds in the case of $i$ being odd.
\end{proof}

We are now ready to proceed with the main proof of Theorem~\ref{thm:Robbins_Littlewood}.

\begin{proof}[First proof of Theorem~\ref{thm:Robbins_Littlewood}]
	We prove the statement by induction on $n$, showing that both sides of \eqref{eq:Robbins_Littlewood} satisfy the same recurrence relation.
	
	To start with, recall that $R_{\boldk}^{\ast}(x_{1},\ldots,x_{n};1,1,w)$ is given by
	\begin{equation*}
		\frac{\asym_{x_{1},\ldots,x_{n}}\left[\prod_{1\le i<j\le n}(x_{i}x_{j}+1+wx_{i})\prod_{i=1}^{n}x_{i}^{k_{i}}\right]}{\prod_{1\le i<j\le n}(x_{j}-x_{i})}.
	\end{equation*}
	Through geometric series evaluation, it follows that
	\begin{equation*}
		F(x_1,\ldots,x_n) \coloneqq \sum_{0 \le k_1 < \dots < k_n} R^{\ast}_{(k_{1},\ldots,k_{n})}(x_{1},\ldots,x_{n};1,1,w)
	\end{equation*} equals
	\begin{equation}\label{eq:asym_expression} 
		\frac{\asym_{x_{1},\ldots,x_{n}}\left[\prod_{1\le i<j\le n}(x_{i}x_{j}+1+wx_{i})\prod_{i=1}^{n}x_{i}^{i-1}(1-\prod_{j=i}^n x_j)^{-1}\right]}{\prod_{1\le i<j\le n}(x_{j}-x_{i})} 
	\end{equation}
	since 	
	\begin{multline*}
		\sum_{0 \le k_1 < \dots < k_n} \prod_{i=1}^{n} x_i^{k_i} =\prod_{i=1}^{n} x_i^{i-1} \sum_{0 \le k_1 \le \dots \le k_n} \prod_{i=1}^{n} x_i^{k_i} \\
		= \prod_{i=1}^{n} x_i^{i-1} \sum_{0 \le k_1 \le \dots \le k_n} (x_1 \dots x_n)^{k_1} (x_2 \dots x_n)^{k_2-k_1} \cdots x_n^{k_n-k_{n-1}}\\
		= \prod_{i=1}^{n} x_i^{i-1} \sum_{k_1=0}^{\infty} (x_1 \dots x_n)^{k_1} \sum_{k_2=0}^{\infty} (x_2 \dots x_n)^{k_2} \cdots  \sum_{k_n=0}^{\infty} x_n^{k_n} = \prod_{i=1}^{n} \frac{x_i^{i-1}}{1-\prod_{j=i}^{n} x_j}.
	\end{multline*}
	Thus, we obtain the recurrence relation
	\begin{equation}\label{eq:rec_Littlewood}
		F(x_1,\ldots,x_n) = 
		\left(1-\prod_{i=1}^n x_i \right)^{-1}  \sum\limits_{k=1}^n \prod\limits_{\myatop{1 \le i \le n}{i \neq k}} \frac{(x_i x_k+1+w x_k) x_i}{x_i-x_k}
		F(x_1,\ldots,\widehat{x_k},\ldots,x_n)
	\end{equation}
	with initial condition
	\begin{equation*}
		F(x_1)=\frac{1}{1-x_1},
	\end{equation*}
	where $x_1,\ldots,\widehat{x_k},\ldots,x_n$ denotes the omission of $x_k$ in $x_1,\ldots,x_n$. 
	
	\NiceMatrixOptions{cell-space-limits=5pt}
	
	If $n=1$, then both sides of \eqref{eq:Robbins_Littlewood} clearly evaluate to $1/(1-x_1)$. Now consider the induction step from $n-1$ to $n$. We make a case distinction regarding the parity of $n$.
	
	Let us first assume that $n$ is even. Using the induction hypothesis and the recursion~\eqref{eq:rec_Littlewood}, it follows that the right-hand side of \eqref{eq:Robbins_Littlewood} equals
	\begin{multline*}
		\left(1-\prod_{i=1}^n x_i\right)^{-1} \sum\limits_{k=1}^n \prod\limits_{\myatop{1 \le i \le n}{i \neq k}} \frac{(x_i x_k+w x_k + 1) x_i}{(x_i-x_k)(1-x_i)}\\
		\times \prod_{\myatop{1 \le i < j \le n}{i,j \neq k}} \frac{x_i+x_j+w x_i x_j}{x_j-x_i} 
		\left. \pf_{\myatop{1 \le i < j \le n+1}{i,j \neq k}} \left( \frac{(x_j-x_i)(1+(1+w) x_i x_j)}{(x_i+x_j+w x_i x_j)(1-x_i x_j)} \right)\right|_{x_{n+1}=1},
	\end{multline*}
	which we rewrite as
	\begin{multline*}
		\left(1-\prod_{i=1}^n x_i\right)^{-1} \prod\limits_{1 \le i < j \le n} \frac{x_i + x_j + w x_i x_j}{x_j-x_i} \sum\limits_{k=1}^n (-1)^{k+1} \left( \prod\limits_{\myatop{1 \le i \le n}{i \neq k}} \frac{1+ w x_k + x_k x_i}{x_k + x_i + w x_k x_i} \right)\\
		\times \prod\limits_{\myatop{1 \le i \le n}{i \neq k}} \frac{x_i}{1-x_i} 
		\left. \pf_{\myatop{1 \le i < j \le n+1}{i,j \neq k}} \left( \frac{(x_j-x_i)(1+(1+w) x_i x_j)}{(x_i+x_j+w x_i x_j)(1-x_i x_j)} \right)\right|_{x_{n+1}=1}.
	\end{multline*}
	Interpreting the sum as an expansion along the last column yields	
	\begin{multline*}
		\left(1-\prod_{i=1}^n x_i\right)^{-1} \prod\limits_{1 \le i < j \le n} \frac{x_i + x_j + w x_i x_j}{x_j-x_i}\\
		\times \pf_{1 \le i < j \le n+2}
		{\scalebox{\myscale}{$\begin{pmatrix}
					\begin{NiceArray}{@{}c|c|c@{}}[corners]
						\left( \frac{x_i}{1-x_i}  \frac{x_j}{1-x_j} \frac{(x_j-x_i)(1+(1+w) x_i x_j)}{(x_i+x_j+w x_i x_j)(1-x_i x_j)} \right)_{1 \le i < j \le n} & \Block{2-1}{\left( \frac{x_i}{1-x_i} \right)_{1 \le i \le n}} & \Block{2-1}{\left( \prod\limits_{\myatop{1 \le p \le n}{p \neq i}} \frac{1+ w x_i + x_i x_p}{x_i + x_p + w x_i x_p} \right)_{1\le i \le n}} \\ \cline{1}
						& & \\ \cline{2-3}
						& & 0
					\end{NiceArray}
				\end{pmatrix}$}}.	
	\end{multline*}
	We extract $\prod_{i=1}^{n} \frac{x_i}{1-x_i}$ from the Pfaffian using~\eqref{eq:linearity_pfaffian} and split it up into two summands as follows:	
	\begin{multline*}
		\pf_{1 \le i < j \le n+2} {\scalebox{\myscale}{$\begin{pmatrix}
					\begin{NiceArray}{@{}c|c|c@{}}[corners]
						\left( \frac{(x_j-x_i)(1+(1+w) x_i x_j)}{(x_i+x_j+w x_i x_j)(1-x_i x_j)} \right)_{1 \le i < j \le n} & \Block{2-1}{\left( 1 \right)_{1 \le i \le n}} & \Block{2-1}{\left( \frac{1-x_i}{x_i}  \left(\prod\limits_{\myatop{1 \le p \le n}{p \neq i}} \frac{1+ w x_i + x_i x_p}{x_i + x_p + w x_i x_p} \right) \right)_{1 \le i \le n}} \\ \cline{1}
						& & \\ \cline{2-3}
						& & -\prod_{k=1}^{n} \frac{1}{x_k}
					\end{NiceArray}
				\end{pmatrix}$}}\\
		+\prod\limits_{i=1}^{n} \frac{1}{x_i} \pf_{1 \le i < j \le n} \left(  \frac{(x_j-x_i)(1+(1+w) x_i x_j)}{(x_i+x_j+w x_i x_j)(1-x_i x_j)} \right).
	\end{multline*}
	This implies that the proof of the even case is complete if we show that	
	\begin{multline}\label{eq:2Pfaffians}
		\pf_{1 \le i < j \le n+2}  {\scalebox{\myscale}{$\begin{pmatrix}
					\begin{NiceArray}{@{}c|c|c@{}}[corners]
						\left( \frac{(x_j-x_i)(1+(1+w) x_i x_j)}{(x_i+x_j+w x_i x_j)(1-x_i x_j)} \right)_{1 \le i < j \le n} & \Block{2-1}{\left( -1 \right)_{1 \le i \le n}} & \Block{2-1}{\left( \frac{1-x_i}{x_i} \left( \prod\limits_{\myatop{1 \le p \le n}{p \neq i}} \frac{1+ w x_i + x_i x_p}{x_i + x_p + w x_i x_p} \right) \right)_{1 \le i \le n}} \\ \cline{1}
						& & \\ \cline{2-3}
						& & \prod_{k=1}^{n} \frac{1}{x_k}
					\end{NiceArray}
				\end{pmatrix}$}}\\
		= \pf_{1 \le i < j \le n} \left(  \frac{(x_j-x_i)(1+(1+w) x_i x_j)}{(x_i+x_j+w x_i x_j)(1-x_i x_j)} \right),
	\end{multline}
	where we incorporated a negative sign into the Pfaffian on the left-hand side by multiplying the $(n+1)$th row and $(n+1)$th column by $-1$ using \eqref{eq:linearity_pfaffian}.
	
	We perform the following elementary row and column operations on the matrix underlying the Pfaffian in the left-hand side of \eqref{eq:2Pfaffians}: For every $j\in\{1,\dots,n\}$, we add the $j$th column times
	\begin{equation*}
		\frac{1+x_j}{x_j} \prod_{\myatop{1 \le p \le n}{p \neq j}} \frac{1-x_j x_p}{x_j -x_p}
	\end{equation*}
	to the last column; we then proceed analogously for the rows. Due to Lemma~\ref{lem:MatrixOperationEven}, this results in the left-hand side of \eqref{eq:2Pfaffians} being equal to
	\begin{equation*}\label{eq:2Pfaffians_1}
		\pf_{1 \le i < j \le n+2}   {\scalebox{\myscale}{$\begin{pmatrix}
					\begin{NiceArray}{@{}c|c|c@{}}[corners]
						\left( \frac{(x_j-x_i)(1+(1+w) x_i x_j)}{(x_i+x_j+w x_i x_j)(1-x_i x_j)} \right)_{1 \le i < j \le n} & \Block{2-1}{\left( -1 \right)_{1 \le i \le n}} & \Block{2-1}{\left(1+\prod_{k=1}^{n} \frac{1}{x_k} \right)_{1 \le i \le n}} \\ \cline{1}
						& & \\ \cline{2-3}
						& & 1
					\end{NiceArray}
				\end{pmatrix}$}}.
	\end{equation*}
	We further manipulate the underlying matrix as follows: We multiply the $(n+1)$th column by $1+\prod_{i=1}^{n} \frac{1}{x_i}$ and add it to the $(n+2)$th column; then we proceed similarly with the last two rows. Thus, we obtain
	\begin{equation*}
		\pf_{1 \le i < j \le n+2}  {\scalebox{\myscale}{$\begin{pmatrix}
					\begin{NiceArray}{@{}c|c|c@{}}[corners]
						\left( \frac{(x_j-x_i)(1+(1+w) x_i x_j)}{(x_i+x_j+w x_i x_j)(1-x_i x_j)} \right)_{1 \le i < j \le n} & \Block{2-1}{\left( -1 \right)_{1 \le i \le n}} & \Block{2-1}{\left(0 \right)_{1 \le i \le n}} \\ \cline{1}
						& & \\ \cline{2-3}
						& & 1
					\end{NiceArray}
				\end{pmatrix}$}}.
	\end{equation*}
	Evaluating this Pfaffian along the last column, finally yields the right-hand side of \eqref{eq:2Pfaffians}.
	
	On the other hand, let us assume that $n$ is odd. Similar as before, applying the recursion~\eqref{eq:rec_Littlewood} to the induction hypothesis and some manipulations imply the right-hand side of \eqref{eq:Robbins_Littlewood} being equal to
	\begin{multline*}
		\left(1-\prod_{i=1}^n x_i \right)^{-1} \prod\limits_{1 \le i < j \le n} \frac{x_i + x_j + w x_i x_j}{x_j-x_i} \sum\limits_{k=1}^n (-1)^{k+1} \left( \prod\limits_{\myatop{1 \le i \le n}{i \neq k}} \frac{1+ w x_k + x_k x_i}{x_k + x_i + w x_k x_i} \right)\\
		\times \prod\limits_{\myatop{1 \le i \le n}{i \neq k}} \frac{x_i}{1-x_i} 
		\pf_{\myatop{1 \le i < j \le n}{i,j \neq k}} \left( \frac{(x_j-x_i)(1+(1+w) x_i x_j)}{(x_i+x_j+w x_i x_j)(1-x_i x_j)} \right).
	\end{multline*}
	We multiply the factor $\prod_{\myatop{1 \le i \le n}{i \neq k}} \frac{x_i}{1-x_i} $ into the Pfaffian using \eqref{eq:linearity_pfaffian} and then interpret the sum as an expansion along the last column, which yields
	\begin{multline*}
		\left(1-\prod_{i=1}^n x_i \right)^{-1} \prod\limits_{1 \le i < j \le n} \frac{x_i + x_j + w x_i x_j}{x_j-x_i}\\
		\times \pf_{1 \le i < j \le n+1}  {\scalebox{\myscale}{$\begin{pmatrix}
					\begin{NiceArray}{@{}c|c@{}}[corners]
						\left( \frac{x_i}{1-x_i}  \frac{x_j}{1-x_j} \frac{(x_j-x_i)(1+(1+w) x_i x_j)}{(x_i+x_j+w x_i x_j)(1-x_i x_j)} \right)_{1 \le i < j \le n} & \Block{2-1}{\left( \prod\limits_{\myatop{1 \le p \le n}{p \neq i}} \frac{1+ w x_i + x_i x_p}{x_i + x_p + w x_i x_p} \right)_{1 \le i \le n}} \\ \cline{1}
						&
					\end{NiceArray}
				\end{pmatrix}$}}.
	\end{multline*}
	Next, we extract again  the product $\prod_{i=1}^{n} \frac{x_i}{1-x_i}$ from the Pfaffian using~\eqref{eq:linearity_pfaffian}
	and recast it by enlarging the dimension of the underlying array as	
	\begin{equation*}
		\pf\limits_{1 \le i < j \le n+3}{\scalebox{\myscale}{$\begin{pmatrix}
					\begin{NiceArray}{@{}c|c|c|c@{}}[corners]
						\left( \frac{(x_j-x_i)(1+(1+w) x_i x_j)}{(x_i+x_j+w x_i x_j)(1-x_i x_j)} \right)_{1 \le i < j \le n} & \Block{2-1}{\left( 1 \right)_{1 \le i \le n}} & \Block{2-1}{\left( \frac{1-x_i}{x_i} \left( \prod\limits_{\myatop{1 \le p \le n}{p \neq i}} \frac{1+ w x_i + x_i x_p}{x_i + x_p + w x_i x_p} \right) \right)_{1 \le i \le n}}  & \Block{2-1}{(0)_{1 \le i \le n}} \\ \cline{1}
						& & & \\ \cline{2-4}
						& & 0 & -1 \\ \cline{3-4}
						& & & 0
					\end{NiceArray}
				\end{pmatrix}$}};
	\end{equation*}
	we then split up the Pfaffian into the following sum:	
	\begin{multline*}
		\pf_{1 \le i < j \le n+3}{\scalebox{\myscale}{$\begin{pmatrix}
					\begin{NiceArray}{@{}c|c|c|c@{}}[corners]
						\left( \frac{(x_j-x_i)(1+(1+w) x_i x_j)}{(x_i+x_j+w x_i x_j)(1-x_i x_j)} \right)_{1 \le i < j \le n} & \Block{2-1}{\left( 1 \right)_{1 \le i \le n}} & \Block{2-1}{ \left( \frac{1-x_i}{x_i} \left( \prod\limits_{\myatop{1 \le p \le n}{p \neq i}} \frac{1+ w x_i + x_i x_p}{x_i + x_p + w x_i x_p} \right) \right)_{1 \le i \le n}}  & \Block{2-1}{(0)_{1 \le i \le n}} \\ \cline{1}
						& & & \\ \cline{2-4}
						& & 0 & -1 \\ \cline{3-4}
						& & & -\prod\limits_{k=1}^{n} \frac{1}{x_k}
					\end{NiceArray}
				\end{pmatrix}$}}\\
		+\prod\limits_{i=1}^{n} \frac{1}{x_i} \pf_{1 \le i < j \le n+1}  {\scalebox{\myscale}{$\begin{pmatrix}
					\begin{NiceArray}{@{}c|c@{}}[corners]
						\left( \frac{(x_j-x_i)(1+(1+w) x_i x_j)}{(x_i+x_j+w x_i x_j)(1-x_i x_j)} \right)_{1 \le i < j \le n} & \Block{2-1}{\left(1 \right)_{1 \le i \le n}} \\ \cline{1}
						&
					\end{NiceArray}
				\end{pmatrix}$}}.
	\end{multline*}
	After putting all together and some minor manipulations, it remains to show that	
	\begin{multline}\label{eq:2Pfaffians_Odd}\\
		\pf_{1 \le i < j \le n+3}{\scalebox{\myscale}{$\begin{pmatrix}
					\begin{NiceArray}{@{}c|c|c|c@{}}[corners]
						\left( \frac{(x_j-x_i)(1+(1+w) x_i x_j)}{(x_i+x_j+w x_i x_j)(1-x_i x_j)} \right)_{1 \le i < j \le n} & \Block{2-1}{\left( 1 \right)_{1 \le i \le n}} & \Block{2-1}{ \left( \frac{1-x_i}{x_i} \left( \prod\limits_{\myatop{1 \le p \le n}{p \neq i}} \frac{1+ w x_i + x_i x_p}{x_i + x_p + w x_i x_p} \right) \right)_{1 \le i \le n}}  & \Block{2-1}{(0)_{1 \le i \le n}} \\ \cline{1}
						& & & \\ \cline{2-4}
						& & 0 & 1 \\ \cline{3-4}
						& & & \prod_{k=1}^{n} \frac{1}{x_k}
					\end{NiceArray}
				\end{pmatrix}$}}\\
		= \pf_{1 \le i < j \le n+1} {\scalebox{\myscale}{$\begin{pmatrix}
					\begin{NiceArray}{@{}c|c@{}}[corners]
						\left( \frac{(x_j-x_i)(1+(1+w) x_i x_j)}{(x_i+x_j+w x_i x_j)(1-x_i x_j)} \right)_{1 \le i < j \le n} & \Block{2-1}{\left(1 \right)_{1 \le i \le n}} \\ \cline{1}
						&
					\end{NiceArray}
				\end{pmatrix}$}}.
	\end{multline}
	To this end, we perform similar elementary row and column operations as in the even case on the matrix underlying the Pfaffian in the left-hand side of the previous equation. We multiply the $i$th column by
	\begin{equation*}
		-\frac{1+x_i}{x_i} \prod_{\myatop{1 \le p \le n}{p \neq i}} \frac{1-x_i x_p}{x_i -x_p}
	\end{equation*}
	and add it to the penultimate column for every $i\in\{1,\dots,n\}$; proceed analogously for the rows. We then obtain for the left-hand side of \eqref{eq:2Pfaffians_Odd} the following due to Lemma~\ref{lem:MatrixOperationEven}:	
	\begin{equation}\label{eq:Pfaffian_odd}
		\pf_{1 \le i < j \le n+3}{\scalebox{\myscale}{$\begin{pmatrix}
					\begin{NiceArray}{@{}c|c|c|c@{}}[corners]
						\left( \frac{(x_j-x_i)(1+(1+w) x_i x_j)}{(x_i+x_j+w x_i x_j)(1-x_i x_j)} \right)_{1 \le i < j \le n} & \Block{2-1}{\left( 1 \right)_{1 \le i \le n}} & \Block{2-1}{\left(-1+\prod_{k=1}^{n} \frac{1}{x_k} \right)_{1 \le i \le n}}  & \Block{2-1}{(0)_{1 \le i \le n}} \\ \cline{1}
						& & & \\ \cline{2-4}
						& & 1+\prod_{k=1}^{n} \frac{1}{x_k} & 1 \\ \cline{3-4}
						& & & \prod_{k=1}^{n} \frac{1}{x_k}
					\end{NiceArray}
				\end{pmatrix}$}}.
	\end{equation}
	We continue with further row and column operations regarding \eqref{eq:Pfaffian_odd}: we add $1-\prod_{k=1}^{n}\frac{1}{x_k}$ times the $(n+1)$th column and $-(1+\prod_{k=1}^{n}\frac{1}{x_k})$ times the $(n+3)$th column to the $(n+2)$th column; similarly for the rows. Finally, we subtract the $(n+1)$ row and column from the $(n+2)$th row and column, respectively. We eventually obtain	
	\begin{equation*}
		\pf_{1 \le i < j \le n+3}{\scalebox{\myscale}{$\begin{pmatrix}
					\begin{NiceArray}{@{}c|c|c|c@{}}[corners]
						\left( \frac{(x_j-x_i)(1+(1+w) x_i x_j)}{(x_i+x_j+w x_i x_j)(1-x_i x_j)} \right)_{1 \le i < j \le n} & \Block{2-1}{\left( 1 \right)_{1 \le i \le n}} & \Block{3-1}{\left(0 \right)_{1 \le i \le n+1}}  & \Block{3-1}{(0)_{1 \le i \le n+1}} \\ \cline{1}
						& & & \\ \cline{2-2}
						& & &  \\ \cline{3-4}
						& & & 1
					\end{NiceArray}
				\end{pmatrix}$}}.
	\end{equation*}
	Expanded along the last column, this expression finally coincides with the right-hand side of \eqref{eq:2Pfaffians_Odd}. 	
\end{proof}

As previously mentioned, we now provide a sketch of the proof using the Izergin--Korepin technique.

\begin{proof}[Sketch of the second proof of Theorem~\ref{thm:Robbins_Littlewood}]
	Let $F(x_1,\dots,x_n)$ denote a multivariate function in $x_1,\dots,x_n$. Then consider the following four properties:
	\begin{enumerate}[label=(\roman*)]
		\item\label{enum:IK1} The expression $F(x_1,\dots,x_n)$ is symmetric in $x_1,\dots,x_n$.
		\item\label{enum:IK2}  It holds that $F(x_1)=\frac{1}{1-x_1}$.
		\item\label{enum:IK3} The normalized function
		\begin{equation*}
			\widetilde{F}(x_1,\dots,x_n) \coloneqq \prod_{i=1}^{n} (1-x_i) \prod_{1 \le i < j \le n} (1-x_i x_j)F(x_1,\dots,x_n)
		\end{equation*} is a rational function in $x_n$ whose numerator is of degree 	of at most~$n$ and whose denominator is of degree at most~$1$.
		\item\label{enum:IK4} We have the following $n+1$ evaluations of $\widetilde{F}(x_1,\dots,x_n)$ for $n > 1$:
		\begin{itemize}
			\item $\left.\widetilde{F}(x_1,\dots,x_n)\right\vert_{x_n=0} = \widetilde{F}(x_1,\dots,x_{n-1})$;
			\item $\left.\widetilde{F}(x_1,\dots,x_n)\right\vert_{x_n=1} = \prod_{k=1}^{n-1} (1 + x_k + w x_k) \widetilde{F}(x_1,\dots,x_{n-1})$;
			\item for all $1 \le i \le n-1$, 
			\begin{multline*}
				\qquad\qquad\left.\widetilde{F}(x_1,\dots,x_n)\right\vert_{x_n=x_i^{-1}} \\
				= \frac{w+2}{x_i^{n-2}} \prod_{\myatop{1 \le k \le n-1}{k \neq i}} (1 + w x_k + x_k x_i)(x_k + x_i + w x_k x_i) \widetilde{F}(x_1,\dots,\widehat{x_i},\dots,x_{n-1}).
			\end{multline*}
		\end{itemize}	
	\end{enumerate}
	
	These properties uniquely determine the function $F(x_1,\dots,x_n)$. It is straightforward to check that the right-hand side of \eqref{eq:Robbins_Littlewood} fulfils these properties, which is left to the reader.
	
	Hence, it remains to show that the left-hand side of \eqref{eq:Robbins_Littlewood} fulfils the same properties. For this purpose, let $F(x_1,\dots,x_n)$ denote the left-hand side of \eqref{eq:Robbins_Littlewood} for the rest of the proof.
	
	Ad~\ref{enum:IK1}: We readily obtain \ref{enum:IK1} by recalling that $F(x_1,\dots,x_n)$ equals \eqref{eq:asym_expression} and that every antisymmetric function in $x_1,\dots,x_n$ can be written as the product of $\prod_{1\le i<j\le n}(x_{j}-x_{i})$ and a symmetric function in $x_1,\dots,x_n$.
	
	\medskip
	
	Ad~\ref{enum:IK2}: The initial condition immediately follows from the definition of $F$.
	
	\medskip
	
	Ad~\ref{enum:IK3}: As a consequence of \eqref{eq:asym_expression}, we can write $\widetilde{F}(x_1,\dots,x_n)$ as
	\begin{multline}\label{eq:rec_Littlewood_tilde}
		\left(1-\prod_{i=1}^n x_i\right)^{-1}\\
		\times \sum\limits_{k=1}^n (1-x_k) \prod\limits_{\myatop{1 \le i \le n}{i \neq k}} \frac{(x_i x_k+1+w x_k) (1-x_i x_k) x_i}{x_i-x_k}
		\widetilde{F}(x_1,\ldots,\widehat{x_k},\ldots,x_n).
	\end{multline}
	We show that $(1-\prod_{i=1}^n x_i) \prod_{1 \le i < j \le n} (x_j-x_i) \widetilde{F}(x_1,\dots,x_n)$ is an antisymmetric polynomial in $x_1,\dots,x_n$ and, thus, divisible by $\prod_{1 \le i < j \le n} (x_j - x_i)$.
	To this end, let $1\le p < q \le n$. We see that $(1-\prod_{i=1}^n x_i) \prod_{1 \le i < j \le n} (x_j-x_i) \widetilde{F}(x_1,\dots,x_n)$ is equal to
	\begin{equation*}
		\begin{aligned}[t]
			\sum\limits_{\myatop{1 \le k \le n}{k \neq p,q}} & (-1)^{k-1} (1-x_k)\\
			& \times \prod_{\myatop{1 \le i < j \le n}{i,j \neq k}} (x_j-x_i) \prod\limits_{\myatop{1 \le i \le n}{i \neq k}} (x_i x_k+1+w x_k) (1-x_i x_k) x_i
			\widetilde{F}(x_1,\ldots,\widehat{x_k},\ldots,x_n)\\
			+ & (-1)^{p-1} (1-x_p)\\
			& \times \prod_{\myatop{1 \le i < j \le n}{i,j \neq p}} (x_j-x_i) \prod\limits_{\myatop{1 \le i \le n}{i \neq p}} (x_i x_p+1+w x_p) (1-x_i x_p) x_i
			\widetilde{F}(x_1,\ldots,\widehat{x_p},\ldots,x_n)\\
			+ & (-1)^{q-1} (1-x_q)\\
			& \times \prod_{\myatop{1 \le i < j \le n}{i,j \neq q}} (x_j-x_i) \prod\limits_{\myatop{1 \le i \le n}{i \neq q}} (x_i x_q+1+w x_q) (1-x_i x_q) x_i
			\widetilde{F}(x_1,\ldots,\widehat{x_q},\ldots,x_n).
		\end{aligned}
	\end{equation*}
	Interchanging $x_p$ and $x_q$ in the previous expression reverses the sign, that is, we eventually obtain 
	$-(1-\prod_{i=1}^n x_i) \prod_{1 \le i < j \le n} (x_j-x_i) \widetilde{F}(x_1,\dots,x_n)$. This holds because of the symmetry of $\widetilde{F}$ and since
	\begin{equation*}
		\left.\prod_{1 \le i < j \le n} (x_j-x_i) \right\vert_{x_q \leftrightarrow x_p}= - \prod_{1 \le i < j \le n} (x_j-x_i)
	\end{equation*}
	as well as
	\begin{equation*}
		\prod_{\myatop{1 \le i < j \le n}{i,j \neq p}} (x_j-x_i) \left.\vphantom{\prod_{1 \le i < j \le n} (x_j-x_i)}\right\vert_{x_q \leftrightarrow x_p}= (-1)^{q-p-1} \prod_{\myatop{1 \le i < j \le n}{i,j \neq q}} (x_j-x_i).
	\end{equation*}
	It follows that
	\begin{equation*}
		\left(1-\prod_{i=1}^n x_i \right) \prod_{i=1}^{n} (1-x_i) \prod_{1 \le i < j \le n} (1-x_i x_j)F(x_1,\dots,x_n)
	\end{equation*}
	is a polynomial in $x_1,\dots,x_n$. In particular, it is a polynomial in $x_n$ of degree at most~$n+1$ as a consequence of \eqref{eq:rec_Littlewood_tilde}.
	
	\medskip
	
	Ad~\ref{enum:IK4}: The evaluation for $x_n=0$ directly follows from \eqref{eq:rec_Littlewood_tilde}. The case $x_n=1$ is dealt with by induction on the number of variables as follows: The base case immediately follows from the definition of $\widetilde{F}$ and \ref{enum:IK2}. Next, setting $x_n=1$ in \eqref{eq:rec_Littlewood_tilde} yields
	\begin{multline*}
		\left(1-\prod_{i=1}^{n-1} x_i \right)^{-1} \sum\limits_{k=1}^{n-1} (1-x_k) (x_k+1+w x_k) \\
		\times \prod\limits_{\myatop{1 \le i \le n-1}{i \neq k}} \frac{(x_i x_k+1+w x_k) (1-x_i x_k) x_i}{x_i-x_k}
		\left(\left.\widetilde{F}(x_1,\ldots,\widehat{x_k},\ldots,x_n)\right\vert_{x_n=1}\right).
	\end{multline*}
	By assuming the induction hypothesis, the previous expression is equal to
	\begin{multline*}
		\left(1-\prod_{i=1}^{n-1} x_i \right)^{-1} \sum\limits_{k=1}^{n-1} (1-x_k) (x_k+1+w x_k) \\
		\times \prod\limits_{\myatop{1 \le i \le n-1}{i \neq k}} \frac{(x_i x_k+1+w x_k) (1-x_i x_k) x_i}{x_i-x_k}  \prod_{\myatop{1 \le l \le n-1}{l \neq k}} (1+x_l+w x_l) \widetilde{F}(x_1,\ldots,\widehat{x_k},\ldots,x_{n-1}),
	\end{multline*}
	which further simplifies to
	\begin{multline*}
		\prod_{1 \le l \le n-1} (1+x_l+w x_l) \left(1-\prod_{i=1}^{n-1} x_i \right)^{-1} \sum\limits_{k=1}^{n-1} (1-x_k) \\
		\times \prod\limits_{\myatop{1 \le i \le n-1}{i \neq k}} \frac{(x_i x_k+1+w x_k) (1-x_i x_k) x_i}{x_i-x_k}
		\widetilde{F}(x_1,\ldots,\widehat{x_k},\ldots,x_{n-1})\\
		=\prod_{1 \le l \le n-1} (1+x_l+w x_l) \widetilde{F}(x_1,\dots,x_{n-1})
	\end{multline*}
	as desired.
	
	For the remaining $n-1$ evaluations, we use again induction on the number of variables. The base case is readily checked as before. Furthermore, it suffices to consider $x_n=x_{n-1}^{-1}$ because of the symmetry of $\widetilde{F}$. Given that case, we obtain
	\begin{multline*}
		\left(1-\prod_{i=1}^{n-2} x_i \right)^{-1} \sum\limits_{k=1}^{n-2} (1-x_k) \frac{(1+w x_k+x_k x_{n-1})(x_k+x_{n-1}+w x_k x_{n-1})}{x_{n-1}}\\
		\times\prod\limits_{\myatop{1 \le i \le n-2}{i \neq k}} \frac{(x_i x_k+1+w x_k) (1-x_i x_k) x_i}{x_i-x_k}
		\left(\left.\widetilde{F}(x_1,\ldots,\widehat{x_k},\ldots,x_n)\right\vert_{x_n=x_{n-1}^{-1}}\right),
	\end{multline*}
	which follows from
	\begin{multline*}
		\frac{(x_{n-1} x_k +1+w x_k)(1-x_{n-1} x_k) x_{n-1}}{x_{n-1}-x_k} \cdot\frac{(x_{n-1}^{-1} x_k +1+w x_k)(1-x_{n-1}^{-1} x_k) x_{n-1}^{-1}}{x_{n-1}^{-1}-x_k}\\
		= \frac{(x_{n-1} x_k +1+w x_k)(x_{n-1}+x_k+w x_{n-1} x_k)}{x_{n-1}}.
	\end{multline*}
	By assuming the induction hypothesis, we proceed with
	\begin{multline*}
		\left(1-\prod_{i=1}^{n-2} x_i \right)^{-1} \sum\limits_{k=1}^{n-2} (1-x_k) \frac{(1+w x_k+x_k x_{n-1})(x_k+x_{n-1}+w x_k x_{n-1})}{x_{n-1}}\\ \times\prod\limits_{\myatop{1 \le i \le n-2}{i \neq k}} \frac{(x_i x_k+1+w x_k) (1-x_i x_k) x_i}{x_i-x_k}\\
		\times \frac{w+2}{x_{n-1}^{n-3}} \prod_{\myatop{1 \le l \le n-2}{l \neq k}} (1 + w x_l + x_l x_{n-1})(x_l + x_{n-1} + w x_l x_{n-1}) \widetilde{F}(x_1,\dots,\widehat{x_k},\dots,x_{n-2}),
	\end{multline*}
	which finally equals
	\begin{multline*}
		\frac{w+2}{x_{n-2}^{n-1}} \prod_{1 \le l \le n-2} (1 + w x_l + x_l x_{n-1})(x_l + x_{n-1} + w x_l x_{n-1})\\
		\times \left(1-\prod_{i=1}^{n-2} x_i \right)^{-1} \sum\limits_{k=1}^{n-2} (1-x_k) \prod\limits_{\myatop{1 \le i \le n-2}{i \neq k}} \frac{(x_i x_k+1+w x_k) (1-x_i x_k) x_i}{x_i-x_k}\\ \times \widetilde{F}(x_1,\dots,\widehat{x_k},\dots,x_{n-2})\\
		= \frac{w+2}{x_{n-1}^{n-2}} \prod_{1 \le l \le n-2} (1 + w x_l + x_l x_{n-1})(x_l + x_{n-1} + w x_l x_{n-1}) \widetilde{F}(x_1,\dots,x_{n-2}).
	\end{multline*}
\end{proof}

As a consequence of Theorem~\ref{thm:Robbins_Littlewood}, Corollary~\ref{cor:Littlewood} follows by straightforward algebraic manipulation, as shown next.

\begin{proof}[Proof of Corollary~\ref{cor:Littlewood}] By definition, 
	\begin{multline*}
		\sum_{k_1 \ge \dots \ge k_n \ge 0} R^{\ast}_{(k_{1},\ldots,k_{n})}(x_{1},\ldots,x_{n};1,1,w) \\ 
		= \frac{
			\asym_{x_1,\ldots,x_n} \left[ \prod_{1 \le i < j \le n} (x_i x_j +1 + w x_i) \sum_{k_1 \ge \dots \ge k_n \ge 0 } \prod_{i=1}^n x_i^{k_i} \right]}
		{\prod_{1 \le i < j \le n} (x_j - x_i)}.
	\end{multline*} 
	Using the geometric series expansion, this is further equal to
	\begin{multline*} 
		\frac{
			\asym_{x_1,\ldots,x_n} \left[ \prod_{1 \le i < j \le n} (x_i x_j +1 + w x_i) 
			\prod_{i=1}^{n} \left( \frac{1}{1 - \prod_{j=1}^i x_j} \right) \right]}
		{\prod_{1 \le i < j \le n} (x_j - x_i)} \\
		= (-1)^n \frac{\asym_{x_1,\ldots,x_n} \left[ \prod_{1 \le i < j \le n} (x_i^{-1} x_j^{-1} +1 + w x_j^{-1}) 
			\prod_{i=1}^{n} \left( \frac{\prod_{j=1}^i x_j^{-1}}{1 - \prod_{j=1}^i x_j^{-1}} \right) \right]}
		{\prod_{1 \le i < j \le n} (x_i^{-1} - x_j^{-1})}.
	\end{multline*} 
	The transformation $x_i \mapsto x_{n+1-i}$ for all $i$ yields
	\begin{equation*}
		(-1)^n \frac{\asym_{x_1,\ldots,x_n} \left[ \prod_{1 \le i < j \le n} (x_i^{-1} x_j^{-1} +1 + w x_i^{-1}) 
			\prod_{i=1}^{n} \left( \frac{ \prod_{j=i}^n x_j^{-1}}{1 - \prod_{j=i}^n x_j^{-1}} \right) \right]}
		{\prod_{1 \le i < j \le n} (x_j^{-1} - x_i^{-1})},
	\end{equation*}
	which equals
	\begin{equation*}
		(-1)^n \prod_{i=1}^n x_i^{-1} \sum_{0 \le k_1 < \dots < k_n} R^{\ast}_{(k_{1},\ldots,k_{n})}(x_{1}^{-1},\ldots,x_{n}^{-1};1,1,w).
	\end{equation*}
	Now Theorem~\ref{thm:Robbins_Littlewood} implies the result. 
\end{proof}

\section{Six-vertex model configurations}
\label{sec:six-vertex}

In this section, we shall see that the right-hand side of \eqref{eq:Robbins_Littlewood} in Theorem~\ref{thm:Robbins_Littlewood} can be interpreted in terms of six-vertex model configurations of DSASMs.

We can recover a DSASM from its upper triangular part due to the diagonal symmetry. Therefore, we shall define six-vertex model configurations on triangular grids that are in bijective correspondence with DSASMs. Let $\mathcal{T}_n$ be the following triangular grid with $n$ top vertices of degree~$1$, $n$ right boundary vertices of degree~$1$, $n$ left boundary vertices of degree~$2$ and $\binom{n}{2}$ bulk vertices of degree~$4$:

\begin{center}
	\begin{tikzpicture}[scale=.35]
		\node at (0,2) {$\bullet$};
		\node at (2,2) {$\bullet$};
		\node at (4,2) {$\bullet$};
		\node at (6,2) {$\bullet$};
		\node at (8,2) {$\bullet$};
		
		\node at (0,0) {$\bullet$};
		\node at (2,0) {$\bullet$};
		\node at (4,0) {$\bullet$};
		\node at (6,0) {$\bullet$};
		\node at (8,0) {$\bullet$};
		
		\node at (2,-2) {$\bullet$};
		\node at (4,-2) {$\bullet$};
		\node at (6,-2) {$\bullet$};
		\node at (8,-2) {$\bullet$};
		
		\node at (4,-4) {$\bullet$};
		\node at (6,-4) {$\bullet$};
		\node at (8,-4) {$\bullet$};
		
		\node at (6,-6) {$\bullet$};
		\node at (8,-6) {$\bullet$};
		
		\node at (8,-8) {$\bullet$};
		
		\node at (10,0) {$\bullet$};
		\node at (10,-2) {$\bullet$};
		\node at (10,-4) {$\bullet$};
		\node at (10,-6) {$\bullet$};
		\node at (10,-8) {$\bullet$};
		
		\node at (5,2) {$\cdots$};
		\node at (5,-5) {\rotatebox{-45}{$\cdots$}};
		\node at (10,-5) {\rotatebox{90}{$\cdots$}};
		
		\draw[very thick] (0,2) -- (0,0) -- (4,0);
		\draw[very thick,dashed] (4,0) -- (6,0);
		\draw[very thick] (6,0) -- (10,0);
		
		\draw[very thick] (2,2) -- (2,-2) -- (4,-2);
		\draw[very thick,dashed] (4,-2) -- (6,-2);
		\draw[very thick] (6,-2) -- (10,-2);
		
		\draw[very thick] (4,2) -- (4,-4);
		\draw[very thick,dashed] (4,-4) -- (6,-4);
		\draw[very thick] (6,-4) -- (10,-4);
		
		\draw[very thick] (6,2) -- (6,-4);
		\draw[very thick,dashed] (6,-4) -- (6,-6);
		\draw[very thick] (6,-6) -- (10,-6);
		
		\draw[very thick] (8,2) -- (8,-4);
		\draw[very thick,dashed] (8,-4) -- (8,-6);
		\draw[very thick] (8,-6) -- (8,-8) -- (10,-8);
		
		\path[draw,decorate,decoration=brace,thick] (11,2) -- (11,-8)
		node[midway,right]{$n+1$ rows};
		
		\path[draw,decorate,decoration=brace,thick] (0,3) -- (10,3)
		node[midway,above]{$n+1$ columns};
	\end{tikzpicture}
\end{center}

Recall that a directed graph is a graph where each edge is equipped with an orientation. The \deff{indegree} of a vertex is the number of edges leading towards that vertex and the $\deff{outdegree}$ is the number of edges leading away from it.

\begin{dfn}
	A \deff{six-vertex model configuration} on $\mathcal{T}_n$ 
	is an orientation of the edges of $\mathcal{T}_n$ such that
	\begin{itemize}
		\item top vertices have indegree~$1$;
		\item right boundary vertices have outdegree~$1$;
		\item bulk vertices have both indegree and outdegree equal to $2$.
	\end{itemize}
\end{dfn}

The name six-vertex model stems from the fact that there are six possibilities for different orientations of bulk vertices since a bulk vertex is incident to four edges, two of which are leading in and two are leading out, amounting to a total of $\binom{4}{2}=6$ possibilities.

We denote the set of all six-vertex model configurations on $\mathcal{T}_n$ by $\SV(n)$. Six-vertex model configurations on $\mathcal{T}_n$ are in simple bijective correspondence with DSASMs of order~$n$ \cite{BFK23}; it follows from the well-known bijection between ASMs and six-vertex model configurations
with so-called domain-wall boundary conditions when restricted to DSASMs.

\begin{exm}
	\label{exm:DSASM}
	This example shows a DSASM of order~$5$ and the corresponding six-vertex model configuration. How local configurations around left boundary vertices and bulk vertices correspond to the entries of the matrix can be seen in Table~\ref{tab:six-vertex weights}. See Table~\ref{tab:bijection} at the end of the Section~\ref{sec:highest_term} for further examples.
	\begin{center}
		$\begin{pmatrix}
			0 & 0 & 1 & 0 & 0\\
			0 & 1 & 0 & 0 & 0\\ 
			1 & 0 &-1 & 1 & 0\\ 
			0 & 0 & 1 & 0 & 0\\
			0 & 0 & 0 & 0 & 1
		\end{pmatrix}$
		$\longleftrightarrow$
		\begin{tikzpicture}[scale=.35,baseline=(current bounding box.center),every node/.style={sloped,allow upside down}]
			\node at (0,2) {$\bullet$};
			\node at (2,2) {$\bullet$};
			\node at (4,2) {$\bullet$};
			\node at (6,2) {$\bullet$};
			\node at (8,2) {$\bullet$};
			
			\node at (0,0) {$\bullet$};
			\node at (2,0) {$\bullet$};
			\node at (4,0) {$\bullet$};
			\node at (6,0) {$\bullet$};
			\node at (8,0) {$\bullet$};
			
			\node at (2,-2) {$\bullet$};
			\node at (4,-2) {$\bullet$};
			\node at (6,-2) {$\bullet$};
			\node at (8,-2) {$\bullet$};
			
			\node at (4,-4) {$\bullet$};
			\node at (6,-4) {$\bullet$};
			\node at (8,-4) {$\bullet$};
			
			\node at (6,-6) {$\bullet$};
			\node at (8,-6) {$\bullet$};
			
			\node at (8,-8) {$\bullet$};
			
			\node at (10,0) {$\bullet$};
			\node at (10,-2) {$\bullet$};
			\node at (10,-4) {$\bullet$};
			\node at (10,-6) {$\bullet$};
			\node at (10,-8) {$\bullet$};
			
			\draw[very thick] (0,0) --node{\midarrow} (0,2);
			\draw[very thick] (2,0) --node{\midarrow} (2,2);
			\draw[very thick] (4,0) --node{\midarrow} (4,2);
			\draw[very thick] (6,0) --node{\midarrow} (6,2);
			\draw[very thick] (8,0) --node{\midarrow} (8,2);
			
			\draw[very thick] (10,0) --node{\midarrow} (8,0);
			\draw[very thick] (10,-2) --node{\midarrow} (8,-2);
			\draw[very thick] (10,-4) --node{\midarrow} (8,-4);
			\draw[very thick] (10,-6) --node{\midarrow} (8,-6);
			\draw[very thick] (10,-8) --node{\midarrow} (8,-8);
			
			\draw[very thick] (0,0) --node{\midarrow} (2,0);
			
			\draw[very thick] (2,-2) --node{\midarrow} (2,0);
			\draw[very thick] (4,-2) --node{\midarrow} (2,-2);
			
			\draw[very thick] (4,-2) --node{\midarrow} (4,-4);
			\draw[very thick] (4,-4) --node{\midarrow} (6,-4);
			
			\draw[very thick] (6,-4) --node{\midarrow} (6,-6);
			\draw[very thick] (8,-6) --node{\midarrow} (6,-6);
			
			\draw[very thick] (8,-8) --node{\midarrow} (8,-6);
			
			\draw[very thick] (2,0) --node{\midarrow} (4,0);
			\draw[very thick] (4,0) --node{\midarrow} (4,-2);
			\draw[very thick] (6,0) --node{\midarrow} (4,0);
			\draw[very thick] (8,0) --node{\midarrow} (6,0);
			
			\draw[very thick] (6,-2) --node{\midarrow} (4,-2);
			\draw[very thick] (6,-2) --node{\midarrow} (6,0);
			\draw[very thick] (8,-2) --node{\midarrow} (6,-2);
			\draw[very thick] (8,-2) --node{\midarrow} (8,0);
			
			\draw[very thick] (6,-4) --node{\midarrow} (6,-2);
			\draw[very thick] (6,-4) --node{\midarrow} (6,-6);
			\draw[very thick] (8,-4) --node{\midarrow} (8,-2);
			\draw[very thick] (8,-4) --node{\midarrow} (6,-4);
			
			\draw[very thick] (8,-6) --node{\midarrow} (8,-4);
			
		\end{tikzpicture}
	\end{center}
\end{exm}

We associate a weight to a given six-vertex model configuration by assigning a weight to each vertex; the weight of the six-vertex model configuration is then the product of the weights of all vertices. We first assign matrix coordinates to the vertices in $\mathcal{T}_n$: a vertex in the $(i+1)$th row from top and in the $j$th column from left is said to be located at position $(i,j)$. The weights of the vertices can then be determined as follows with the help of Table~\ref{tab:six-vertex weights}, where $\tau(x) \coloneq \sqrt{-1-w x-x^2}$:
\begin{itemize}
	\item Top vertices and right boundary vertices have weight $1$.
	\item Bulk vertices at position $(i,j)$ with local configuration~$c$ have weight $W(c;x_i,x_j)$.
	\item Left boundary vertices at $(i,i)$ with local configuration~$c$ have weight $W(c;x_i)$.
\end{itemize}
The sum of these weights over all six-vertex model configurations in $\SV(n)$ is called the \deff{partition function of DSASMs of order~$n$}, denoted by $Z_{\DSASM} (x_1,\dots,x_n)$.

\begin{table}[htbp]
	\centering
	\begin{tabular}{llc}\toprule
		bulk weights & left boundary weights & matrix entries\\\midrule\medskip
		$W\Big(\raisebox{-1.5ex}{\wone};x,y\Big)=\tau(x) \tau(y)$ & $W\Big(\raisebox{-.25ex}{\wlone};x\Big)=1$ & $1$\\\medskip
		$W\Big(\raisebox{-1.5ex}{\wmone};x,y\Big)=\tau(x) \tau(y)$ & $W\Big(\raisebox{-.25ex}{\wlmone};x\Big)=- 1$ & $-1$\\\medskip
		$W\Big(\raisebox{-1.5ex}{\wne};x,y\Big) = x + y + w x y$ & $W\Big(\raisebox{-.25ex}{\wlout};x\Big)=\sqrt{w+2} \frac{x}{\tau(x)}$ & $0$\\\medskip
		$W\Big(\raisebox{-1.5ex}{\wsw};x,y\Big) = x + y + w x y$ & $W\Big(\raisebox{-.25ex}{\wlin};x\Big)=\sqrt{w+2} \frac{x}{\tau(x)}$ & $0$\\\medskip
		$W\Big(\raisebox{-1.5ex}{\wnw};x,y\Big) = 1-x y$ & & $0$\\\medskip
		$W\Big(\raisebox{-1.5ex}{\wse};x,y\Big) = 1-x y$ & & $0$\\\bottomrule
	\end{tabular}
	\caption[Vertex weights]{Vertex weights in the six-vertex model.\protect\footnotemark}
	\label{tab:six-vertex weights}
\end{table}

It can be seen that the weight of each six-vertex model configuration on $\mathcal{T}_n$ is indeed a polynomial in $w$ and $x_1$,\dots,$x_n$.

\begin{prop}
	\label{prop:polynomial} 
	The weight of a six-vertex model configuration on $\mathcal{T}_n$ with respect to Table~\ref{tab:six-vertex weights} is polynomial in $x_1,\dots,x_n$ and $w$.
\end{prop}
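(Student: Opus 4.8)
The plan is to locate the only non-polynomial ingredients of the vertex weights in Table~\ref{tab:six-vertex weights} and to show that, for every configuration $C\in\SV(n)$, they combine to a polynomial. First I would record that the weight of $C$ is a product of vertex weights, each of which is a polynomial in $x_1,\dots,x_n,w$ times a power of some $\tau(x_k)=\sqrt{-1-wx_k-x_k^2}$ and a power of $\sqrt{w+2}$: the factor $\tau(x)$ occurs (with exponent $+1$ per variable) exactly in the two bulk weights equal to $\tau(x)\tau(y)$, and with exponent $-1$ in each left boundary weight $\sqrt{w+2}\,x/\tau(x)$, while $\sqrt{w+2}$ occurs only in those same left boundary weights; all remaining factors---$1$, $-1$, $x+y+wxy$, $1-xy$, and the factor $x_i$ in $\sqrt{w+2}\,x_i/\tau(x_i)$---are polynomials. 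Since $\tau(x_i)^2=-1-wx_i-x_i^2$ and $(\sqrt{w+2})^2=w+2$ are themselves polynomials in $x_1,\dots,x_n,w$, it then suffices to check that in the weight of any $C$: (a) for each $i$, the total exponent of $\tau(x_i)$ is a nonnegative even integer; and (b) the total exponent of $\sqrt{w+2}$ is a nonnegative even integer.

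For (a) I would fix $i$ and note that $\tau(x_i)$ can occur only in the weight of the left boundary vertex at $(i,i)$ and in the weights of the bulk vertices on ``line $i$'', that is, those at positions $(j,i)$ with $j<i$ or $(i,j)$ with $j>i$. Let $M$ be the DSASM associated to $C$ by the bijection encoded in Table~\ref{tab:six-vertex weights}. Then the bulk vertices on line $i$ of weight $\tau(x)\tau(y)$ correspond exactly to the nonzero off-diagonal entries of row $i$ of $M$, and the left boundary vertex at $(i,i)$ has weight $\sqrt{w+2}\,x_i/\tau(x_i)$ precisely when $M_{i,i}=0$; hence the total exponent of $\tau(x_i)$ equals $\#\{j:M_{i,j}\neq 0\}-1$. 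Because the nonzero entries of row $i$ of an alternating sign matrix alternate in sign and sum to~$1$, their number is odd and at least~$1$, so this exponent is indeed a nonnegative even integer.

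For (b), the total exponent of $\sqrt{w+2}$ in the weight of $C$ equals $\#\{i:M_{i,i}=0\}$, and by the identity in (a) the parity of $[M_{i,i}=0]$ coincides with that of $\#\{j\neq i:M_{i,j}\neq 0\}$ for each $i$; summing over $i$, the number of vanishing diagonal entries has the same parity as $\#\{(i,j):i\neq j,\ M_{i,j}\neq 0\}$, which is even since $M$ is symmetric. Replacing each surviving power $\tau(x_i)^{2m_i}$ by $(-1-wx_i-x_i^2)^{m_i}$ and $(\sqrt{w+2})^{2m}$ by $(w+2)^{m}$ then exhibits the weight of $C$ as a polynomial in $x_1,\dots,x_n$ and $w$, as desired. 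The main obstacle is really just the bookkeeping in the step identifying the line-$i$ vertex types with the entries of row~$i$ of $M$; I expect an equivalent, bijection-free route to be available, by traversing line~$i$ from its degree-one top vertex to its degree-one right boundary vertex and checking vertex-by-vertex in Table~\ref{tab:six-vertex weights} that the traversed edge's orientation reverses relative to the direction of travel exactly at the vertices contributing a factor $\tau(x_i)^{\pm1}$, so that, both endpoint edges being reversed relative to travel, their number is even.
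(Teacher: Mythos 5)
Your proof is correct, and it splits into the same two parities the paper checks, but it handles them with partly different tools. For the exponent of $\tau(x_i)$ your argument is essentially the paper's, repackaged: the paper dissects $\mathcal{T}_n$ into $n$ L-shaped lines and uses that the DSASM entries along the $i$th line sum to $1$ (diagonal symmetry plus unit row sums), then does a case analysis of where the odd block of $\pm1$'s sits to cancel the $\tau(x_i)^{-1}$ from the boundary vertex; your identity ``exponent of $\tau(x_i)=\#\{j:M_{i,j}\neq0\}-1$'' is a cleaner, case-free statement of the same count (and correctly allows the exponent $0$, which the paper's phrase ``positive even integer'' glosses over). For the exponent of $\sqrt{w+2}$ the routes genuinely differ: the paper observes that all local configurations except the two left-boundary ones carrying $\sqrt{w+2}$ are balanced (indegree $=$ outdegree), so comparing total in- and outdegrees of the orientation gives the stronger fact that those two configurations occur equally often, whence the exponent is even; you instead prove only what is needed, namely that the number of zero diagonal entries of the DSASM is even, by summing your row-parity identity over $i$ and using symmetry of $M$. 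Both are valid; the paper's degree-balance argument buys the equality of the two boundary-vertex counts, which it reuses later (in the $w=0$ discussion and in the remark after Problem~\ref{prob:bijective_proof}), while your version stays entirely on the matrix side and avoids any graph-theoretic input beyond the correspondence in Table~\ref{tab:six-vertex weights}. Your concluding ``bijection-free'' traversal sketch is not needed for the argument and is left unverified, but nothing in the main proof depends on it.
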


\begin{proof}
	First, we point out that all local configurations in Table~\ref{tab:six-vertex weights} except $\raisebox{-.5ex}{\wlout}$ and $\raisebox{-.5ex}{\wlin}$ are balanced in the sense that for each vertex the indegree coincides with the outdegree. Since for every directed graph the sum of all indegrees equals the sum of all outdegrees, there are as many $\raisebox{-.5ex}{\wlout}$ as $\raisebox{-.5ex}{\wlin}$ in any six-vertex model configuration, considering also the equal numbers of top and right boundary vertices in $\mathcal{T}_n$. Hence, the factor $\sqrt{w+2}$ appears with an even exponent in the weight. Consequently, the weight is a polynomial in $w$.
	
	\footnotetext{Note that the bulk weights given in Table~\ref{tab:six-vertex weights} are, up to a common factor that depend on the spectral parameters $x$ and $y$, a reparametrization of the Boltzmann weights given by the standard $R$-matrix associated to the quantum group $U_q(\widehat{\mathfrak{sl}}_2)$ of the affine Lie algebra~$\widehat{\mathfrak{sl}}_2$. Under this parametrization, $w$ plays the role of the quantum group parameter. This fact will become apparent in the proof of Theorem~\ref{thm:partition function}.}
	
	Next, we dissect $\mathcal{T}_n$ into $n$ $\raisebox{-.5ex}{\wL}$-shaped paths from $(i,n+1)$ to $(0,i)$ for all $1\le i \le n$. The corresponding entries of the DSASM along these paths sum to $1$, which follows from the diagonal symmetry and the fact that the sums of entries along rows and columns equal $1$ in any ASM. Hence, there is an odd number of local configurations corresponding to an entry of $1$ or $-1$ along each path. If the left boundary vertex of a given path is $\raisebox{-.5ex}{\wlone}$ or $\raisebox{-.5ex}{\wlmone}$, then there is an even number of bulk vertices $\raisebox{-1.5ex}{\wone}$ and $\raisebox{-1.5ex}{\wmone}$ in the same column above and in the same row to the right. However, if the left boundary vertex is $\raisebox{-.5ex}{\wlout}$, there is an even number of bulk vertices$\raisebox{-1.5ex}{\wone}$ and $\raisebox{-1.5ex}{\wmone}$ in the column above and an odd number in the row to the right. Conversely, if the left boundary vertex is $\raisebox{-.5ex}{\wlin}$, there is an odd number of these bulk vertices in the column above and an even number in the row to the right. The factor $\tau(x)^{-1}$, contributed by each of these left boundary vertices $\raisebox{-.5ex}{\wlout}$ and $\raisebox{-.5ex}{\wlin}$, is cancelled by one of the vertices in the part of the path with the odd number of bulk vertices corresponding to $1$ or $-1$. It is straightforward to see that, in total, every factor $\tau(x)$ appears with a positive even integer, resulting in a polynomial weight in $x_1,\dots,x_n$.
\end{proof}

The following theorem establishes a closed expression for the partition function of DSASMs in our setting.

\begin{thm}
	\label{thm:partition function}
	The partition function~$Z_{\DSASM} (x_1,\dots,x_n)$ of DSASMs of order~$n$ with respect to the weights given in Table~\ref{tab:six-vertex weights} is
	\begin{multline}\label{eq:partition function}
		\prod_{1 \le i < j \le n} \frac{(1-x_i x_j) (x_i+x_j+w x_i x_j)}{x_j -x_i}\\
		\times \pf_{\chi_{\mathrm{even}}(n) \le i < j \le n} \left(
		\begin{cases}
			1, & i=0,\\
			\frac{(x_j-x_i)(1+(1+w) x_i x_j)}{(x_i+x_j+w x_i x_j)(1-x_i x_j)},& i\ge 1.
		\end{cases} \right)
	\end{multline}
\end{thm}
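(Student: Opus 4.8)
The starting observation is that the right-hand side of~\eqref{eq:partition function} is, up to a polynomial factor, a function we have already identified. Comparing it with the right-hand side of~\eqref{eq:Robbins_Littlewood}, one sees that \eqref{eq:partition function} equals $\prod_{i=1}^{n}(1-x_i)\prod_{1\le i<j\le n}(1-x_ix_j)$ times the right-hand side of~\eqref{eq:Robbins_Littlewood}, so by Theorem~\ref{thm:Robbins_Littlewood} it coincides with the normalized function $\widetilde F(x_1,\dots,x_n)$ of the Izergin--Korepin sketch, taken with $F$ equal to the left-hand side of~\eqref{eq:Robbins_Littlewood}. The plan is therefore to prove $Z_{\DSASM}=\widetilde F$ entirely on the six-vertex side, from which Theorem~\ref{thm:comb_interpretation_RHS} also follows. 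Concretely, I would verify that $Z_{\DSASM}$ satisfies the four properties \ref{enum:IK1}--\ref{enum:IK4} of that sketch, with $Z_{\DSASM}$ playing the role of $\widetilde F$ and $\prod_{i}(1-x_i)^{-1}\prod_{i<j}(1-x_ix_j)^{-1}Z_{\DSASM}$ playing the role of $F$. Since those properties pin the function down---by symmetry, induction on $n$, and interpolation in $x_n$ from the $n+1$ specializations in~\ref{enum:IK4}---and since the right-hand side of~\eqref{eq:partition function} satisfies them (being equal to $\widetilde F$, for which this is the claim left to the reader in Section~\ref{sec:mainproof}), this identifies $Z_{\DSASM}$ with it.

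The genuinely nontrivial point is the symmetry~\ref{enum:IK1}. Every individual vertex weight in Table~\ref{tab:six-vertex weights} is symmetric in its two spectral parameters, but the triangular shape of $\mathcal{T}_n$ entangles the parameters, so symmetry of the full partition function is not manifest. I would establish it by the usual Yang--Baxter ``train'' argument at the free-fermion point: check that the bulk weights $\tau(x)\tau(y)$, $x+y+wxy$ and $1-xy$ satisfy the Yang--Baxter equation with an explicit $R$-matrix, that the left-boundary weights $\sqrt{w+2}\,x/\tau(x)$ obey the matching reflection equation, and then slide an $R$-vertex across $\mathcal{T}_n$ from the right boundary to the diagonal, thereby interchanging two consecutive parameters $x_i\leftrightarrow x_{i+1}$. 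It is precisely the reflecting triangular geometry that lets this argument close up.

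The remaining properties reduce to combinatorics on $\mathcal{T}_n$. For~\ref{enum:IK2}, the grid $\mathcal{T}_1$ has a single six-vertex configuration, whose one left-boundary vertex is of type $\raisebox{-.5ex}{\wlone}$ with weight $1$, so $Z_{\DSASM}(x_1)=1$, matching the $2\times2$ Pfaffian on the right (which equals $1$). For~\ref{enum:IK3}, $Z_{\DSASM}$ is a polynomial by Proposition~\ref{prop:polynomial}, and following its $x_n$-dependence along the $\raisebox{-.5ex}{\wL}$-shaped path carrying $x_n$---one left-boundary vertex and $n-1$ bulk vertices, after the $\tau$-cancellations of Proposition~\ref{prop:polynomial}---bounds the $x_n$-degree by~$n$. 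For~\ref{enum:IK4}, the three families of specializations are freezing arguments. At $x_n=0$ the weights $W(\raisebox{-1.5ex}{\wnw})=W(\raisebox{-1.5ex}{\wse})=1$ and $W(\raisebox{-1.5ex}{\wne})=W(\raisebox{-1.5ex}{\wsw})=x_i$ while the left-boundary ``out/in'' weight $\sqrt{w+2}\,x_n/\tau(x_n)$ vanishes, which forces the bottom path into a fixed pattern and peels off a copy of $\mathcal{T}_{n-1}$. At $x_n=1$ a subtler reduction collapses that path up to the factor $\prod_{k<n}(1+x_k+wx_k)=\prod_{k<n}W(\raisebox{-1.5ex}{\wne};x_k,1)$. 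At $x_n=x_i^{-1}$ the vanishing $W(\raisebox{-1.5ex}{\wnw})=W(\raisebox{-1.5ex}{\wse})=1-x_ix_n=0$ forbids certain local configurations at the crossing of the $i$th and $n$th paths; the resulting cascade removes both paths, leaving $\mathcal{T}_{n-2}$ together with the factor $(w+2)\,x_i^{-(n-2)}\prod_{k\ne i,\,1\le k\le n-1}(1+wx_k+x_kx_i)(x_k+x_i+wx_kx_i)$, the $w+2$ coming from squaring $\sqrt{w+2}$ on the two affected left-boundary vertices.

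I expect the Yang--Baxter/reflection step to be the main obstacle: pinning down the correct $R$- and $K$-matrices at this $w$-deformed free-fermion point and checking that the train argument survives the nonstandard boundary weights and the triangular domain. The double freezing at $x_n=x_i^{-1}$ is the second delicate point, since one must follow the chain of forced vertices through two interlocking L-paths and confirm that the accumulated weight is exactly the stated product. Everything else is routine: the degree bound, the $x_n=0$ specialization, the parity bookkeeping of the extra $i=0$ row when passing between even and odd $n$ (handled, on the right-hand side, by the very same Pfaffian row and column operations as in the first proof of Theorem~\ref{thm:Robbins_Littlewood}, cf.\ Lemma~\ref{lem:MatrixOperationEven}), and the verification that the right-hand side of~\eqref{eq:partition function} satisfies \ref{enum:IK1}--\ref{enum:IK4}, which is the claim for $\widetilde F$ already made in Section~\ref{sec:mainproof}.
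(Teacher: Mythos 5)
Your route is genuinely different from the paper's, and unfortunately it is not yet a proof. The paper disposes of Theorem~\ref{thm:partition function} by a pure change of variables: setting $w=q^4+q^{-4}$ and $x_i=-q^2(1+q^2u_i^2)(q^6+u_i^2)^{-1}$ turns the weights of Table~\ref{tab:six-vertex weights} into those of \cite[Theorem~11]{BFK23} for a suitable choice of that theorem's boundary parameters, and \eqref{eq:partition function} is simply the transform of the Pfaffian given there; no characterization of $Z_{\DSASM}$ is established in this paper at all. What you propose is to re-derive that input from scratch by an Izergin--Korepin characterization. That is legitimate in principle (it is essentially how such Pfaffian formulas are proved in \cite{BFK23}), and parts of your outline are sound: the initial reduction to showing $Z_{\DSASM}=\widetilde F$, the $n=1$ check, the $x_n=0$ freezing (the vanishing boundary weight forces the diagonal entry at $(n,n)$ to be $+1$, hence the last row and column of the DSASM to be unit vectors), and the $x_n=x_i^{-1}$ freezing coming from the vanishing of $1-x_ix_n$.

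However, two of your steps are genuine gaps. First, the $x_n=1$ evaluation cannot be obtained by the ``collapse of the path'' you describe: at $x_n=1$ no weight in Table~\ref{tab:six-vertex weights} vanishes (the column-$n$ bulk weights become $1+x_i+wx_i$, $1-x_i$ and $\tau(x_i)\tau(1)$, and the boundary out/in weight becomes $\mp\imag$), so there is no freezing, and already for $n=2$ one has $Z_{\DSASM}(x_1,1)=(1-x_1)+(w+2)x_1$, i.e.\ the factor $1+x_1+wx_1$ only emerges after summing over configurations, not configuration by configuration. Since your degree bound in $x_n$ is $n$, you need all $n+1$ evaluations of \ref{enum:IK4}; to drop $x_n=1$ you would need a sharper degree bound valid for \emph{both} sides, or an extra symmetry of $Z_{\DSASM}$ (an invariance under an involution of the spectral parameter coming from the diagonal boundary, which is what is exploited in \cite{BFK23}), and you supply neither. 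Second, the symmetry property \ref{enum:IK1}, which you yourself flag as the main obstacle, is the actual mathematical content of the theorem: the bulk weights do have constant $\Delta=-w/2$, so a Yang--Baxter equation is plausible, but the required boundary relation compatible with the left-boundary weights $1$, $-1$, $\sqrt{w+2}\,x/\tau(x)$ is precisely the nontrivial ingredient the paper imports from \cite{BFK23} and cannot be taken on faith. As it stands, the proposal is a plan whose two hardest steps are unproven, and whose $x_n=1$ reduction would fail as described.
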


\begin{proof}
	We consider the substitution $w=q^4+q^{-4}$ as well as $x_i=-q^2 (1+q^2 u_i^2) (q^6 + u_i^2)^{-1}$ for all $1 \le i \le n$, under which
	$\tau(x_i)$ maps to $\rho(u_i)\coloneqq\frac{(1-q^8) u_i}{q (q^6 +u_i^2)}$. The vertex weights in Table~\ref{tab:six-vertex weights} thus become
	\begin{align*}
		W\Big(\raisebox{-1.5ex}{\wone};u_i,u_j\Big)&=W\Big(\raisebox{-1.5ex}{\wmone};u_i,u_j\Big)=\rho(u_i) \rho(u_j),\\
		W\Big(\raisebox{-1.5ex}{\wne};u_i,u_j\Big)&=W\Big(\raisebox{-1.5ex}{\wsw};u_i,u_j\Big)=\frac{q^2 (1-q^4 u_i^2 u_j^2)}{u_i u_j (1-q^8)} \rho(u_i) \rho(u_j),\\
		W\Big(\raisebox{-1.5ex}{\wnw};u_i,u_j\Big)&=W\Big(\raisebox{-1.5ex}{\wse};u_i,u_j\Big)=\frac{q^2 (u_i^2 u_j^2 - q^4)}{u_i u_j  (1-q^8)} \rho(u_i) \rho(u_j),\\
		W\Big(\raisebox{-.25ex}{\wlout};u_i\Big)&=W\Big(\raisebox{-.25ex}{\wlin};u_i\Big)= -\frac{q (1+q^2 u_i^2)}{u_i (1-q^4)};
	\end{align*}
	all other vertex weights remain unchanged.
	
	Using the following notation, namely 
	\begin{equation*}
		\overline{x} \coloneqq x^{-1}, \quad \sigma(x) \coloneqq x - \overline{x} \quad \text{and} \quad \widehat{\sigma}(x) \coloneqq \sigma(x)/\sigma(q^4),
	\end{equation*}
	and setting $\alpha = - \overline{q} (1-q^4)$, $\beta = -\alpha$, $\gamma = \delta = q $ as well as $\phi(u) = \alpha^{-1} \sigma(q u)^{-1}$, we see that
	\begin{align*}
		\frac{q^2 (1-q^4 u_i^2 u_j^2)}{u_i u_j (1-q^8)} &= \widehat{\sigma}(q^2 u_i u_j),\\ 
		\frac{q^2 (u_i^2 u_j^2 - q^4)}{u_i u_j (1-q^8)} &= \widehat{\sigma}(q^2 \overline{u_i u_j}) \quad\text{and}\\
		-\frac{q (1+q^2 u_i^2)}{u_i (1-q^4)} &= \gamma \sigma(q^2 u_i^2) \phi(u_i).
	\end{align*}
	
	It follows from \cite[Theorem~11]{BFK23} that the partition function with respect to these weights is given by
	\begin{multline*}
		\prod_{1 \le i < j \le n} \rho(u_i) \rho(u_j) \frac{(1-q^4 u_i^2 u_j^2)(q^4 - u_i^2 u_j^2)}{u_i u_j(1-q^8)(u_i^2 - u_j^2)}\\ \times\pf_{\chi_{\mathrm{even}}(n) \le i < j \le n} \left(
		\begin{cases}
			1, & i=0,\\
			\frac{q^2 (u_i^2-u_j^2)(1+q^8+q^2(1+q^4)(u_i^2+u_j^2)+(1+q^8)u_i^2 u_j^2)}{(1-q^4)(q^4-u_i^2 u_j^2)(1-q^4 u_i^2 u_j^2)},& i\ge 1,
		\end{cases} \right)
	\end{multline*}
	which coincides with \eqref{eq:partition function} under our substitution.
\end{proof}

Theorem~\ref{thm:comb_interpretation_RHS} is now an immediate consequence of Theorem~\ref{thm:Robbins_Littlewood} and Theorem~\ref{thm:partition function}.

Note that the factors $\rho(u_i) \rho(u_j)$ of the bulk weights are not necessary for computing the partition function~$Z_{\DSASM} (x_1,\dots,x_n)$. However, by incorporating these additional factors, we are able to reveal the striking analogy between the identity given in Theorem~\ref{thm:comb_interpretation_RHS} and the classical Littlewood identity~\eqref{eq:Littlewood}.

\section{\texorpdfstring{The coefficient of the highest term in the polynomial expansion of $Z_{\DSASM} (x_1,\dots,x_n)$}{The coefficient of the highest term in the polynomial expansion of the partition function}}
\label{sec:highest_term}

We have argued in Section~\ref{sec:six-vertex} that the partition function $Z_{\DSASM} (x_1,\dots,x_n)$ is a symmetric polynomial in $x_1,\dots,x_n$. Since $Z_{\DSASM} (x_1,\dots,x_n)$ is given by
\begin{equation*}
	\prod_{1 \le i < j \le n} \frac{(1-x_i x_j)(x_i+x_j+w x_i x_j)}{x_j-x_i}
	\pf_{\chi_{\mathrm{even}}(n) \le i < j \le n} \left(
	\begin{cases}
		1, & i=0,\\
		\frac{(x_j-x_i)(1+(1+w) x_i x_j)}{(x_i+x_j+w x_i x_j)(1-x_i x_j)},& i\ge 1,
	\end{cases} \right)
\end{equation*}
the total degree of $Z_{\DSASM} (x_1,\dots,x_n)$ is $n(n-1)$. In this section, we compute the coefficient of the leading term $x_1^{n-1} \cdots x_n^{n-1}$.

To this end, we need the following two lemmas on the manipulation of Pfaffians.

\begin{lem}\label{thm:pfaffian_transform}
	For an even positive integer~$n$ and power series~$f(x,y)$, $h(x)$ and $k(x)$ such that $f(x,y)$ is antisymmetric in $x$ and $y$, it holds that
	\begin{equation*}
		\pf_{0 \le i < j \le n-1} \left( \langle u^i v^j \rangle k(u) k(v) f(h(u)u, h(v)v) \right) = k(0)^n h(0)^{\binom{n}{2}} \pf_{0 \le i < j \le n-1} \left( \langle u^i v^j \rangle f(u,v) \right).
	\end{equation*}
\end{lem}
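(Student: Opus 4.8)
The plan is to realize the $n\times n$ array appearing on the left-hand side as a congruence transform $T F T^{\mathsf{T}}$ of the array $F=\bigl(\langle u^iv^j\rangle f(u,v)\bigr)_{0\le i,j\le n-1}$ from the right-hand side, and then to invoke the classical identity $\pf(T F T^{\mathsf{T}})=\det(T)\,\pf(F)$, which holds for any $n\times n$ matrix $T$ and any skew-symmetric $n\times n$ matrix $F$ with $n$ even. All manipulations take place in the ring of formal power series, so no convergence questions arise.

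First I would expand $f$ as a formal power series, $f(x,y)=\sum_{p,q\ge0}f_{p,q}x^py^q$; antisymmetry of $f$ forces $f_{q,p}=-f_{p,q}$, so the truncation $F=(f_{p,q})_{0\le p,q\le n-1}$ is skew-symmetric. Substituting $x=h(u)u$ and $y=h(v)v$ is legitimate since $h(u)u$ has vanishing constant term, and multiplying through by $k(u)k(v)$ gives
\begin{equation*}
	k(u)k(v)\,f\bigl(h(u)u,h(v)v\bigr)=\sum_{p,q\ge0}f_{p,q}\,\bigl(k(u)h(u)^pu^p\bigr)\bigl(k(v)h(v)^qv^q\bigr).
\end{equation*}
Next I would introduce $T=(T_{i,p})_{i,p\ge0}$ with $T_{i,p}\coloneqq\langle u^{i-p}\rangle\bigl(k(u)h(u)^p\bigr)$, which vanishes whenever $i<p$, so $T$ is lower triangular with diagonal entries $T_{p,p}=k(0)h(0)^p$. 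Extracting the coefficient of $u^iv^j$ from the previous display yields
\begin{equation*}
	\langle u^iv^j\rangle k(u)k(v)f\bigl(h(u)u,h(v)v\bigr)=\sum_{p,q\ge0}T_{i,p}\,f_{p,q}\,T_{j,q},
\end{equation*}
and because $T$ is lower triangular, only indices $0\le p\le i\le n-1$ and $0\le q\le j\le n-1$ contribute when $0\le i,j\le n-1$. Hence, writing $T$ henceforth for its $n\times n$ truncation, the array in the lemma is precisely the upper-triangular part of the skew-symmetric matrix $T F T^{\mathsf{T}}$.

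To finish, I would apply $\pf(T F T^{\mathsf{T}})=\det(T)\,\pf(F)$ (valid since $F$ is skew-symmetric and $n$ is even) and compute $\det(T)=\prod_{p=0}^{n-1}T_{p,p}=\prod_{p=0}^{n-1}k(0)h(0)^p=k(0)^n h(0)^{0+1+\dots+(n-1)}=k(0)^n h(0)^{\binom n2}$ from triangularity, while $\pf(F)=\pf_{0\le i<j\le n-1}\bigl(\langle u^iv^j\rangle f(u,v)\bigr)$ is exactly the Pfaffian on the right-hand side.

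I do not expect a genuine obstacle here; the argument is essentially a bookkeeping exercise once the $T F T^{\mathsf{T}}$ structure is spotted. The two points that need mild care are (i) checking that restricting to the $n\times n$ truncations introduces no error, which is immediate from the lower-triangularity of $T$ (the omitted indices $p,q\ge n$ never contribute to entries with $i,j\le n-1$), and (ii) recording the diagonal of $T$ correctly so that the exponent of $h(0)$ evaluates to $\binom n2$. The potentially degenerate case $h(0)=0$ (or $k(0)=0$) needs no separate treatment: $\pf(T F T^{\mathsf{T}})=\det(T)\,\pf(F)$ is a polynomial identity in the entries of $T$ and $F$, so it remains valid, and both sides of the lemma then vanish.
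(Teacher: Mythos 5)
Your proof is correct. The paper itself does not give an argument at this point: it disposes of the lemma in one line by citing an external result (\cite[Theorem~14]{BFK23} with $m=2n$), so your route is genuinely different in that it is self-contained. Your key observation — that the array $\bigl(\langle u^iv^j\rangle k(u)k(v)f(h(u)u,h(v)v)\bigr)_{0\le i,j\le n-1}$ is the congruence transform $TFT^{\mathsf{T}}$ of $F=\bigl(\langle u^iv^j\rangle f(u,v)\bigr)$ by the lower-triangular matrix $T_{i,p}=\langle u^{i-p}\rangle\bigl(k(u)h(u)^p\bigr)$ — is exactly the mechanism one would expect to underlie the cited theorem, and all the delicate points are handled properly: the substitution $x=h(u)u$ is legitimate because $h(u)u$ has no constant term, the truncation to $n\times n$ loses nothing because $T_{i,p}=0$ for $p>i$, the diagonal entries $T_{p,p}=k(0)h(0)^p$ give $\det(T)=k(0)^nh(0)^{\binom{n}{2}}$, and $\pf(TFT^{\mathsf{T}})=\det(T)\pf(F)$ applies since $F$ is skew-symmetric and $n$ is even (your closing remark that no separate treatment is needed when $k(0)=0$ or $h(0)=0$ is accurate, since the identity is polynomial in the entries). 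What your approach buys is independence from the external reference and transparency about why the hypotheses (evenness of $n$, antisymmetry of $f$) are needed; what the paper's citation buys is brevity and, in the source, a statement proved there in the generality actually needed elsewhere in that reference. Either is acceptable; yours would serve as a complete replacement proof.
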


\begin{proof}
	This lemma directly follows from \cite[Theorem~14]{BFK23} if we set $m=2n$.
\end{proof}

\begin{lem}\label{thm:pfaffian_transform_oddcase}
	For an odd positive integer~$n$ and an antisymmetric power series~$f(x,y)$ in $x$ and $y$, it holds that
	\begin{multline}\label{eq:pfaffian_transform_oddcase}
		\pf_{1 \le i < j \le n-1} \left(\langle u^i v^j\rangle (1 \pm u)^{\delta_{i,1}} f(u,v) \right) \\
		= \pf_{0 \le i < j \le n-2} \left(\langle u^i v^j\rangle \frac{f(u,v) - (1 \mp u) f(0,v) - (1 \mp v) f(u,0)}{u v} \right).
	\end{multline}
\end{lem}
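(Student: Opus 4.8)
In contrast to the even-variable companion Lemma~\ref{thm:pfaffian_transform}, which relied on \cite[Theorem~14]{BFK23}, this identity admits a completely elementary proof. The plan is to show that, after the order-preserving reindexing $(i,j)\mapsto(i-1,j-1)$, the skew-symmetric array whose Pfaffian stands on the right of \eqref{eq:pfaffian_transform_oddcase} has \emph{exactly} the same strictly-upper-triangular entries as the one on the left; since a Pfaffian depends only on those entries and on the order type of the index set, this identification is essentially the whole argument. Throughout I would write $a_{i,j}\coloneqq\langle u^i v^j\rangle f(u,v)$ for $i,j\ge 0$, so that antisymmetry of $f$ reads $a_{j,i}=-a_{i,j}$; in particular $a_{i,i}=0$ and $f(0,0)=0$.

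The first step is to check that the right-hand side even makes sense. Writing
\begin{equation*}
	g(u,v)\coloneqq\frac{f(u,v)-(1\mp u)f(0,v)-(1\mp v)f(u,0)}{uv},
\end{equation*}
one specializes the numerator at $u=0$ to get $f(0,v)-f(0,v)-(1\mp v)f(0,0)=0$ and at $v=0$ to get $f(u,0)-(1\mp u)f(0,0)-f(u,0)=0$, so the numerator is divisible by $uv$ and $g$ is a power series; moreover $g(v,u)=-g(u,v)$, using $f(v,u)=-f(u,v)$, $f(0,u)=-f(u,0)$ and $f(v,0)=-f(0,v)$. Hence $\pf_{0\le i<j\le n-2}\bigl(\langle u^i v^j\rangle g(u,v)\bigr)$ is well defined.

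The heart of the argument is then a short coefficient computation. For $1\le i<j\le n-1$ the left-hand entry $b_{i,j}\coloneqq\langle u^i v^j\rangle(1\pm u)^{\delta_{i,1}}f(u,v)$ equals $a_{i,j}$ if $i\ge 2$ and equals $a_{1,j}\pm a_{0,j}$ if $i=1$, since $\langle u^i v^j\rangle\,u\,f(u,v)=a_{i-1,j}$. For $0\le i<j\le n-2$ the right-hand entry is $c_{i,j}\coloneqq\langle u^i v^j\rangle g(u,v)=\langle u^{i+1}v^{j+1}\rangle\bigl(f(u,v)-(1\mp u)f(0,v)-(1\mp v)f(u,0)\bigr)$. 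Because $j>i\ge 0$ forces $j+1\ge 2$, the term $(1\mp v)f(u,0)$ (a series in $u$ alone) contributes nothing, while $(1\mp u)f(0,v)=f(0,v)\mp u\,f(0,v)$ contributes only through its $\mp u\,f(0,v)$ part and only when $i+1=1$; hence $c_{i,j}=a_{i+1,j+1}\pm[i=0]\,a_{0,j+1}$. Comparing the two computations, $c_{i-1,j-1}=a_{i,j}\pm[i=1]\,a_{0,j}=b_{i,j}$ for all $1\le i<j\le n-1$. Since $(i,j)\mapsto(i-1,j-1)$ is an order-preserving bijection between $\{(i,j):1\le i<j\le n-1\}$ and $\{(i,j):0\le i<j\le n-2\}$, and the sign of a perfect matching in the Pfaffian expansion depends only on the order of the index set, \eqref{eq:pfaffian_transform_oddcase} follows.

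I do not expect a serious obstacle here. The only points needing care are (a) confirming that $g$ is a genuine antisymmetric power series so that the right-hand Pfaffian is legitimate, and (b) keeping track of exactly which pieces of the extraction $\langle u^{i+1}v^{j+1}\rangle(\cdots)$ survive in the relevant range — it is precisely the constraint $j+1\ge 2$ that makes the correction terms $(1\mp u)f(0,v)$ and $(1\mp v)f(u,0)$ harmless apart from producing the wanted $\pm a_{0,j}$ in the first row. If one preferred to avoid the reindexing, the same identity could be obtained by bordering the left-hand Pfaffian and performing simultaneous row and column operations, as in the proof of Theorem~\ref{thm:Robbins_Littlewood}, but the direct matching above is the most economical route.
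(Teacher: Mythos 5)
Your proof is correct and takes essentially the same route as the paper: the paper likewise replaces the prefactor $(1\pm u)^{\delta_{i,1}}$ by the additive corrections $\pm u\,f(0,v)\pm v\,f(u,0)$ and then shifts indices by dividing by $uv$ (subtracting the boundary terms), which is exactly your entry-by-entry identification $c_{i-1,j-1}=b_{i,j}$ under the order-preserving relabeling. Your version merely merges the paper's two steps into one direct coefficient comparison, and the well-definedness and antisymmetry checks you include are sound.
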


\begin{proof}
	Let $f(u,v) = \sum_{i,j \ge 0} f_{i,j} u^i v^j$ be an antisymmetric power series in $u$ and $v$. It follows that
	\begin{equation*}
		\langle u^i v^j \rangle (1 \pm u)^{\delta_{i,1}} f(u,v) =
		\begin{cases}
			f_{1,j} \pm f_{0,j}, & 1=i<j,\\
			f_{i,j}, & 1 \neq i<j.
		\end{cases}
	\end{equation*}
	Next, we consider
	\begin{equation*}
		f(u,v) \pm u f(0,v) \pm v f(u,0),
	\end{equation*}
	which is antisymmetric in $x$ and $y$ as well. We see that it has the same distribution of coefficients as $(1 \pm u)^{\delta_{i,1}} f(u,v)$. Thus, we know that the left-hand side of \eqref{eq:pfaffian_transform_oddcase} equals
	\begin{equation}\label{eq:pfaffian_transform_oddcase_a}
		\pf_{1 \le i < j \le n-1} \left(\langle u^i v^j\rangle \left( f(u,v) \pm u f(0,v) \pm v f(u,0) \right) \right).
	\end{equation}
	
	We want to shift the indices from $1 \le i < j \le n-1$ to $0 \le i < j \le n-2$. Let $g(u,v)$ be a power series in $u$ and $v$. Then
	\begin{equation*}
		\frac{g(u,v)-g(u,0)-g(0,v)+g(0,0)}{u v}
	\end{equation*}
	is also a power series satisfying
	\begin{equation*}
		\langle u^{i-1} v^{j-1} \rangle \frac{g(u,v)-g(u,0)-g(0,v)+g(0,0)}{u v} = \langle u^i v^j \rangle g(u,v).
	\end{equation*}
	
	If we set $g(u,v)$ to be $f(u,v) \pm u f(0,v) \pm v f(u,0)$, we finally conclude that \eqref{eq:pfaffian_transform_oddcase_a} is equal to the right-hand side of \eqref{eq:pfaffian_transform_oddcase}. \qedhere	
\end{proof}

We are now ready to compute the coefficient of the highest term in the polynomial expansion of $Z_{\DSASM} (x_1,\dots,x_n)$.

\begin{proof}[Proof of Theorem~\ref{thm:highest_term}]
	In order to compute the highest term in $Z_{\DSASM} (x_1,\dots,x_n)$, we have to consider the vertex weights given in Table~\ref{tab:six-vertex weights highest term}, where $\imag$ denotes the imaginary unit.
	
	\begin{table}[htbp]
		\centering
		\begin{tabular}{llc}\toprule
			bulk weights & left boundary weights & matrix entries\\\midrule\medskip
			$W\Big(\raisebox{-1.5ex}{\wone};x,y\Big)= 1$ & $W\Big(\raisebox{-.25ex}{\wlone};x\Big)=1$ & $1$\\\medskip
			$W\Big(\raisebox{-1.5ex}{\wmone};x,y\Big)= 1$ & $W\Big(\raisebox{-.25ex}{\wlmone};x\Big)=- 1$ & $-1$\\\medskip
			$W\Big(\raisebox{-1.5ex}{\wne};x,y\Big) = - w$ & $W\Big(\raisebox{-.25ex}{\wlout};x\Big)=- \imag \sqrt{w+2} $ & $0$\\\medskip
			$W\Big(\raisebox{-1.5ex}{\wsw};x,y\Big) = - w$ & $W\Big(\raisebox{-.25ex}{\wlin};x\Big)= - \imag \sqrt{w+2}$ & $0$\\\medskip
			$W\Big(\raisebox{-1.5ex}{\wnw};x,y\Big) = 1$ & & $0$\\\medskip
			$W\Big(\raisebox{-1.5ex}{\wse};x,y\Big) = 1$ & & $0$\\\bottomrule
		\end{tabular}
		\caption{Vertex weights for the highest term in the polynomial expansion of $Z_{\DSASM} (x_1,\dots,x_n)$.}
		\label{tab:six-vertex weights highest term}
	\end{table}
	
	To see this, we extract the terms with the largest contribution to the coefficient of $x_1^{n-1} \cdots x_n^{n-1}$ from the weights in Table~\ref{tab:six-vertex weights} and additionally multiply every bulk weight with $-1$. Thus, we obtain the coefficient $x_1^{n-1} \cdots x_n^{n-1}$ in $Z_{\DSASM} (x_1,\dots,x_n)$ by the following expression:
	\begin{equation}\label{eq:highest_weight}
		(-1)^{\binom{n}{2}} \sum_{\SV(n)} (-w)^{\# \raisebox{-1ex}{\scalebox{.75}{\wne}}+\# \raisebox{-1ex}{\scalebox{.75}{\wsw}}}  (-1)^{\# \scalebox{.75}{\wlmone}} (- \imag \sqrt{w+2})^{\# \scalebox{.75}{\wlout}+\# \scalebox{.75}{\wlin}}.
	\end{equation}
	The previous generating function can be computed by the following expression when setting $p=w^2$, $r=1$, $s_+=\frac{1}{-\imag\sqrt{w+2}}$, and $s_-=\frac{1}{\imag \sqrt{w+2}}$ in \cite[Theorem~20]{BFK23}:
	\begin{multline}\label{eq:highest_gen_1}
		(-1)^{\binom{n}{2}} (- \imag \sqrt{w+2})^n \pf_{\chi_{\mathrm{odd}}(n) \le i < j \le n-1} \left(\langle u^i v^j\rangle \left(\frac{\imag (1-u)}{\sqrt{w+2}}\right)^{\chi_{\mathrm{odd}}(n) \delta_{i,1}} \frac{v-u}{1- uv} \right.\\
		\left. \times \left( \frac{-1}{w+2} (1+u)(1 + v) + \frac{ (1-u^2)(1-v^2)}{(1-u)(1-v) - w^2 uv} \right)\right).
	\end{multline}
	
	This conclusion relies on the fact that there are as many $\raisebox{-1.5ex}{\wne}$ as $\raisebox{-1.5ex}{\wsw}$ in any six-vertex model configuration (see \cite[Section~5.1]{BFK23}). In addition, we extract $- \imag \sqrt{w+2}$ from any left boundary vertex, resulting in the weights $\frac{1}{-\imag\sqrt{w+2}}$, $\frac{1}{\imag\sqrt{w+2}}$, $1$ and $1$ for  $\raisebox{-.5ex}{\wlone}$,  $\raisebox{-.5ex}{\wlmone}$,  $\raisebox{-.5ex}{\wlout}$ and  $\raisebox{-.5ex}{\wlin}$, respectively.
	
	On the other hand, \cite[Theorem~20]{BFK23} also implies that
	\begin{equation}\label{eq:highest_weight_alternative}
		\sum_{\SV(n)} w^{\# \raisebox{-1ex}{\scalebox{.75}{\wnw}}+\# \raisebox{-1ex}{\scalebox{.75}{\wse}}}
	\end{equation}
	equals
	\begin{multline}\label{eq:highest_gen_2}
		w^{\binom{n}{2}} \pf_{\chi_{\mathrm{odd}}(n) \le i < j \le n-1} \left(\langle u^i v^j\rangle (1+u)^{\chi_{\mathrm{odd}}(n) \delta_{i,1}} \frac{v-u}{1- uv} \right.\\
		\left. \times \left( (1 - u)(1 - v) + \frac{ w(1-u^2)(1-v^2)}{(w-u)(w-v) - uv} \right)\right)
	\end{multline}
	by setting $p=\frac{1}{w^2}$, $r=\frac{1}{w}$, and $s_+=s_-=1$.
	
	The goal of the remaining part of this proof is to show that \eqref{eq:highest_gen_1} and \eqref{eq:highest_gen_2} conicide. To this end, we use Lemma~\ref{thm:pfaffian_transform}.
	
	Let us first consider the case of $n$ being even. Then \eqref{eq:highest_gen_1} equals
	\begin{multline*}
		(-1)^{\binom{n}{2}} (- \imag \sqrt{w+2})^n\\
		\times \pf_{0 \le i < j \le n-1} \left(\langle u^i v^j\rangle  \frac{(v-u)(1+u)(1+v)((1-u)(1-v)(1+w)+u v w^2)}{(w+2)(1-u v)(1-u-v+(1-w^2)u v)} \right).
	\end{multline*}
	By invoking Lemma~\ref{thm:pfaffian_transform} twice for $f(x,y)=- \frac{(y-x)((1-x)(1-y)(1+w)+x y w^2)}{(1-x y)(1-x-y+(1-w^2)x y)}$, first with $k(x)=\frac{1+x}{-\imag \sqrt{w+2}}$ and $h(x)=1$ and second with $k(x)=1$ and $h(x)=\frac{1}{x-w}$, we deduce that
	\begin{multline*}
		(-1)^{\binom{n}{2}} (- \imag \sqrt{w+2})^n\\
		\times \pf_{0 \le i < j \le n-1} \left(\langle u^i v^j\rangle  \frac{(v-u)(1+u)(1+v)((1-u)(1-v)(1+w)+u v w^2)}{(w+2)(1-u v)(1-u-v+(1-w^2)u v)} \right)\\
		= (-1)^{\binom{n}{2}} \pf_{0 \le i < j \le n-1} \left(\langle u^i v^j\rangle \left(- \frac{(v-u)((1-u)(1-v)(1+w)+u v w^2)}{(1-u v)(1-u-v+(1-w^2)u v)} \right)\right)\\
		= w^{\binom{n}{2}} \pf_{0 \le i < j \le n-1} \left(\langle u^i v^j\rangle \frac{(v-u)(1+u v+w)}{(1-u v)(w- u-v)} \right).
	\end{multline*}
	Moreover, \eqref{eq:highest_gen_2} equals
	\begin{equation*}
		w^{\binom{n}{2}} \pf_{0 \le i < j \le n-1} \left(\langle u^i v^j\rangle   \frac{(v-u)(1-u)(1-v)(1+u v+w)}{(1-u v)(w- u-v)} \right),
	\end{equation*}
	which also equals
	\begin{equation}\label{eq:pfaffian_DSASM_even}
		w^{\binom{n}{2}} \pf_{0 \le i < j \le n-1} \left(\langle u^i v^j\rangle \frac{(v-u)(1+u v+w)}{(1-u v)(w- u-v)} \right)
	\end{equation}
	by applying Lemma~\ref{thm:pfaffian_transform} with $f(x,y)=\frac{(y-x)(1+x y+w)}{(1-x y)(w- x - y)}$, $k(x)=1-x$ and $h(x)=1$.
	
	Notice that by setting $w=1$ in \eqref{eq:pfaffian_DSASM_even}, we obtain the expression for the number of DSASM of even order~$n$ provided in \eqref{eq:DSASMs}.
	
	On the other hand, let $n$ be odd. Then \eqref{eq:highest_gen_1} equals
	\begin{multline}\label{eq:highest_gen_1odd}
		(-1)^{\binom{n}{2}} (- \imag \sqrt{w+2})^{n-1} \pf_{1 \le i < j \le n-1} \left(\langle u^i v^j\rangle  (1-u)^{\delta_{i,1}} \right.\\
		\left. \times  \frac{(v-u)(1+u)(1+v)((1-u)(1-v)(1+w)+u v w^2)}{(2+w)(1-u v)(1-u-v+(1-w^2)u v)} \right),
	\end{multline}
	whereas \eqref{eq:highest_gen_2} equals
	\begin{equation}\label{eq:highest_gen_2odd}
		w^{\binom{n}{2}} \pf_{1 \le i < j \le n-1} \left(\langle u^i v^j\rangle (1+u)^{\delta_{i,1}} \frac{(v-u)(1-u)(1-v)(1+u v+w)}{(1-u v)(w-u-v)} \right).
	\end{equation}
	We will show that both of these expressions are equal to
	\begin{equation}\label{eq:pfaffian_DSASM_odd}
		w^{\binom{n}{2}} \pf_{1 \le i < j \le n-1} \left(\langle u^i v^j\rangle \frac{(v-u)(1+u v+w)}{(1-u v)(w- u-v)} \right)
	\end{equation}
	by applying Lemma~\ref{thm:pfaffian_transform} to \eqref{eq:highest_gen_1odd} and \eqref{eq:highest_gen_2odd}. To this end, we need to remove the prefactors $(1 \pm u)^{\delta_{i,1}}$ and shift the indices of the Pfaffian, which is all accomplished by Lemma~\ref{thm:pfaffian_transform_oddcase}.
	
	We apply Lemma~\ref{thm:pfaffian_transform_oddcase} to \eqref{eq:highest_gen_1odd} and obtain
	\begin{equation*}
		(-1)^{\binom{n}{2}} (- \imag \sqrt{w+2})^{n-1} \pf_{0 \le i < j \le n-2} \left(\langle u^i v^j\rangle  \frac{(v-u)(1+u)(1+v)p_1(w;u,v)}{(2+w)(1-u v)(1-u-v+(1-w^2)u v)} \right),
	\end{equation*}
	where $p_1(w;u,v) \coloneqq (1-u)(1-v)(1+w)+w^2+w^2 (1+w) (1-u v)$. We then apply twice Lemma~\ref{thm:pfaffian_transform}. As in the even case, we first consider $k(x)=\frac{1+x}{- \imag \sqrt{w+2}}$ and $h(x)=1$, which results in
	\begin{equation*}
		(-1)^{\binom{n}{2}} \pf_{0 \le i < j \le n-2} \left(\langle u^i v^j\rangle \frac{-(v-u) p_1(w;u,v)}{(1-u v)(1-u-v+(1-w^2)u v)} \right).
	\end{equation*}
	The second application with $k(x)=1$ and $h(x)=\frac{1}{x-w}$ finally yields
	\begin{equation}\label{eq:highest_gen_odd_equal}
		\underbrace{(-1)^{\binom{n}{2}} (-w)^{\binom{n-1}{2}}}_{=w^{\binom{n-1}{2}}} \pf_{0 \le i < j \le n-2} \left(\langle u^i v^j\rangle  \frac{(v-u)p_2(w;u,v)}{(1-u v)(w-u-v)} \right),
	\end{equation}
	where $p_2(w;u,v) \coloneqq 1+u v+w-2 u w-2 v w+2 w^2-u w^2-v w^2+w^3$.
	
	On the other hand, by applying Lemma~\ref{thm:pfaffian_transform_oddcase} to \eqref{eq:highest_gen_2odd}, we obtain
	\begin{equation*}
		w^{\binom{n}{2}} \pf_{0 \le i < j \le n-2} \left(\langle u^i v^j\rangle  \frac{(v-u)(1-u)(1-v) p_2(w;u,v)}{(w-u)(w-v)(1-u v)(w-u-v)} \right).
	\end{equation*}
	Applying Lemma~\ref{thm:pfaffian_transform} with $k(x)=\frac{1-x}{w-x}$ and $h(x)=1$ to the previous expression then yields the following which coincides with \eqref{eq:highest_gen_odd_equal}:
	\begin{equation*}
		\underbrace{w^{\binom{n}{2}} w^{-n+1}}_{=w^{\binom{n-1}{2}}} \pf_{0 \le i < j \le n-2} \left(\langle u^i v^j\rangle  \frac{(v-u) p_2(w;u,v)}{(1-u v)(w-u-v)} \right).
	\end{equation*}
	This proves the equality of the generating functions \eqref{eq:highest_gen_1odd} and \eqref{eq:highest_gen_2odd}.
	
	Finally, we want to prove that \eqref{eq:highest_gen_2odd} coincides with \eqref{eq:pfaffian_DSASM_odd}. Therefore, let $f(u,v) = \sum_{i,j \ge 0} f_{i,j} u^i v^j$ again be an antisymmetric power series in $u$ and $v$. Consider the two triangular arrays
	\begin{equation*}
		\left(\langle u^i v^j\rangle f(u,v) \right)_{1 \le i < j \le n-1}\quad\text{and}\quad \left(\langle u^i v^j\rangle (1+u)^{\delta_{i,1}} (1-u)(1-v) f(u,v) \right)_{1 \le i < j \le n-1}.
	\end{equation*}
	Let $A$ and $B$, respectively, denote their skew-symmetric completions, which are, by definition, $(n-1) \times (n-1)$ skew-symmetric matrices. We claim that $A$ is obtained from $B$ by sequentially performing the following elementary row and column operations on $B$ for $i=1,\dots,n-2$: add the $i$th row to the $(i+1)$th row and the $i$th column to the $(i+1)$th column. Note that this observation would prove the equality of $\eqref{eq:highest_gen_2odd}$ and $\eqref{eq:pfaffian_DSASM_odd}$ by taking $f(u,v)=\frac{(v-u)(1+u v+w)}{(1-u v)(w- u-v)}$.
	
	To show this observation, let $M$ be the skew-symmetric completion of the triangular array $( m_{i,j} )_{1 \le i < j \le n-1}$. It is straightforward to see that the aforementioned elementary row and column operations on $M$ result in the skew-symmetric completion of $\left( \sum_{k=1}^{i} \sum_{l=i+1}^{j} m_{k,l} \right)_{1 \le i < j \le  n-1}$.
	
	Furthermore, let $F_{i,j} \coloneqq \langle u^i v^j\rangle (1+u)^{\delta_{i,1}}(1-u)(1-v) f(u,v)$. We see that
	\begin{equation*}
		F_{i,j} =
		\begin{cases}
			f_{1,j} - f_{1,j-1},&1 = i < j,\\
			f_{i,j} - f_{i-1,j} - f_{i,j-1} + f_{i-1,j-1},&1 \neq i < j.
		\end{cases}
	\end{equation*}
	Thus, we need to show that
	\begin{equation*}
		\left( \sum_{k=1}^{i} \sum_{l=i+1}^{j} F_{k,l} \right)_{1 \le i < n-1} = f_{i,j}.
	\end{equation*}
	The proof will be accomplished by induction on $i$. For the base case, we obtain by telescoping
	\begin{equation*}
		\sum_{l=2}^{j} F_{1,l} = \sum_{l=2}^{j} \left( f_{1,l} - f_{1,l-1} \right) = f_{1,j}.
	\end{equation*}
	Note that $f_{i,i}=0$ for all~$i$ due to the antisymmetry of $f$.
	
	For the induction step, assume $i > 1$. First, we observe that
	\begin{equation*}
		\sum_{l=i+1}^{j} F_{i,l} = f_{i,j} - f_{i-1,j} + f_{i-1,i}
	\end{equation*}
	and
	\begin{equation*}
		\sum_{k=1}^{i-1} F_{k,i} = f_{i-1,i}.
	\end{equation*}
	Then, by using the induction hypothesis, we argue that
	\begin{equation*}
		\sum_{k=1}^{i} \sum_{l=i+1}^{j} F_{k,l}
		= \sum_{k=1}^{i-1} \sum_{l=i}^{j} F_{k,l} + \sum_{l=i+1}^{j} F_{i,l} - \sum_{k=1}^{i-1} F_{k,i}
		= f_{i-1,j} +f_{i,j} - f_{i-1,j} + f_{i-1,i} - f_{i-1,i}
		= f_{i,j},
	\end{equation*}
	which concludes the proof.
\end{proof}

The proof of Theorem~\ref{thm:highest_term} equates the two generating functions \eqref{eq:highest_weight} and \eqref{eq:highest_weight_alternative}. This raises the question of finding a bijective proof. We propose the following problem; see Table~\ref{tab:bijection} for an illustration of the  problem for $n=3$.

\begin{prob}\label{prob:bijective_proof}
	Find a bijection between DSASMs of order~$n$ to prove the following identity:
	\begin{equation*}
		(-1)^{\binom{n+1}{2}} \sum_{\SV(n)}  (-1)^{\# \scalebox{.75}{\wlone}+\# \scalebox{.75}{\wlin}}  w^{\# \raisebox{-1ex}{\scalebox{.75}{\wne}}+\# \raisebox{-1ex}{\scalebox{.75}{\wsw}}} (w+2)^{\# \scalebox{.75}{\wlout}} = \sum_{\SV(n)} w^{\# \raisebox{-1ex}{\scalebox{.75}{\wnw}}+\# \raisebox{-1ex}{\scalebox{.75}{\wse}}}.
	\end{equation*}
\end{prob}

Note that the left-hand side of the previous equation follows from the generating function \eqref{eq:highest_weight} if we multiply the weight of every left boundary vertex in Table~\ref{tab:six-vertex weights highest term} by $-1$ and bear in mind that there are as many $\raisebox{-.5ex}{\wlout}$ as $\raisebox{-.5ex}{\wlin}$ in any six-vertex model configuration as we have seen in Section~\ref{sec:six-vertex}.

\begin{table}[htbp]
	\centering
	\resizebox{\textwidth}{!}{\begin{tabular}{lccccc}\toprule
			DSASM&
			$\begin{pmatrix}
				1 & 0 & 0\\
				0 & 1 & 0\\
				0 & 0 & 1
			\end{pmatrix}$&
			$\begin{pmatrix}
				1 & 0 & 0\\
				0 & 0 & 1\\
				0 & 1 & 0
			\end{pmatrix}$&
			$\begin{pmatrix}
				0 & 1 & 0\\
				1 & 0 & 0\\
				0 & 0 & 1
			\end{pmatrix}$&
			$\begin{pmatrix}
				0 & 0 & 1\\
				0 & 1 & 0\\
				1 & 0 & 0
			\end{pmatrix}$&
			$\begin{pmatrix}
				0 & 1 & 0\\
				1 &-1 & 1\\
				0 & 1 & 0
			\end{pmatrix}$\\\midrule
			$\SV$&
			\begin{tikzpicture}[scale=.35,baseline=(current bounding box.center),every node/.style={sloped,allow upside down}]
				
				\draw[very thick] (0,0) --node{\midarrow} (0,2);
				\draw[very thick] (2,0) --node{\midarrow} (2,2);
				\draw[very thick] (4,0) --node{\midarrow} (4,2);
				
				\draw[very thick] (6,0) --node{\midarrow} (4,0);
				\draw[very thick] (6,-2) --node{\midarrow} (4,-2);
				\draw[very thick] (6,-4) --node{\midarrow} (4,-4);
				
				\draw[very thick] (2,0) --node{\midarrow} (0,0);
				
				\draw[very thick] (2,-2) --node{\midarrow} (2,0);
				\draw[very thick] (4,-2) --node{\midarrow} (2,-2);
				
				\draw[very thick] (4,-4) --node{\midarrow} (4,-2);
				
				\draw[very thick] (4,0) --node{\midarrow} (2,0);
				\draw[very thick] (4,-2) --node{\midarrow} (4,0);
				
				\node at (0,2) {$\bullet$};
				\node at (2,2) {$\bullet$};
				\node at (4,2) {$\bullet$};
				
				\node at (0,0) {\color{orange}{$\bullet$}};
				\node at (2,0) {\color{teal}{$\bullet$}};
				\node at (4,0) {\color{teal}{$\bullet$}};
				
				\node at (2,-2) {\color{orange}{$\bullet$}};
				\node at (4,-2) {\color{teal}{$\bullet$}};
				
				\node at (4,-4) {\color{orange}{$\bullet$}};
				
				\node at (6,0) {$\bullet$};
				\node at (6,-2) {$\bullet$};
				\node at (6,-4) {$\bullet$};
			\end{tikzpicture}&
			\begin{tikzpicture}[scale=.35,baseline=(current bounding box.center),every node/.style={sloped,allow upside down}]
				
				\draw[very thick] (0,0) --node{\midarrow} (0,2);
				\draw[very thick] (2,0) --node{\midarrow} (2,2);
				\draw[very thick] (4,0) --node{\midarrow} (4,2);
				
				\draw[very thick] (6,0) --node{\midarrow} (4,0);
				\draw[very thick] (6,-2) --node{\midarrow} (4,-2);
				\draw[very thick] (6,-4) --node{\midarrow} (4,-4);
				
				\draw[very thick] (2,0) --node{\midarrow} (0,0);
				
				\draw[very thick] (2,-2) --node{\midarrow} (2,0);
				\draw[very thick] (2,-2) --node{\midarrow} (4,-2);
				
				\draw[very thick] (4,-2) --node{\midarrow} (4,-4);
				
				\draw[very thick] (4,0) --node{\midarrow} (2,0);
				\draw[very thick] (4,-2) --node{\midarrow} (4,0);
				
				\node at (0,2) {$\bullet$};
				\node at (2,2) {$\bullet$};
				\node at (4,2) {$\bullet$};
				
				\node at (0,0) {\color{orange}{$\bullet$}};
				\node at (2,0) {\color{teal}{$\bullet$}};
				\node at (4,0) {\color{teal}{$\bullet$}};
				
				\node at (2,-2) {\color{violet}{$\bullet$}};
				\node at (4,-2) {$\bullet$};
				
				\node at (4,-4) {\color{orange}{$\bullet$}};
				
				\node at (6,0) {$\bullet$};
				\node at (6,-2) {$\bullet$};
				\node at (6,-4) {$\bullet$};
			\end{tikzpicture}&
			\begin{tikzpicture}[scale=.35,baseline=(current bounding box.center),every node/.style={sloped,allow upside down}]
				
				\draw[very thick] (0,0) --node{\midarrow} (0,2);
				\draw[very thick] (2,0) --node{\midarrow} (2,2);
				\draw[very thick] (4,0) --node{\midarrow} (4,2);
				
				\draw[very thick] (6,0) --node{\midarrow} (4,0);
				\draw[very thick] (6,-2) --node{\midarrow} (4,-2);
				\draw[very thick] (6,-4) --node{\midarrow} (4,-4);
				
				\draw[very thick] (0,0) --node{\midarrow} (2,0);
				
				\draw[very thick] (2,0) --node{\midarrow} (2,-2);
				\draw[very thick] (4,-2) --node{\midarrow} (2,-2);
				
				\draw[very thick] (4,-4) --node{\midarrow} (4,-2);
				
				\draw[very thick] (4,0) --node{\midarrow} (2,0);
				\draw[very thick] (4,-2) --node{\midarrow} (4,0);	
				
				\node at (0,2) {$\bullet$};
				\node at (2,2) {$\bullet$};
				\node at (4,2) {$\bullet$};
				
				\node at (0,0) {\color{violet}{$\bullet$}};
				\node at (2,0) {$\bullet$};
				\node at (4,0) {\color{teal}{$\bullet$}};
				
				\node at (2,-2) {\color{orange}{$\bullet$}};
				\node at (4,-2) {\color{teal}{$\bullet$}};
				
				\node at (4,-4) {\color{orange}{$\bullet$}};
				
				\node at (6,0) {$\bullet$};
				\node at (6,-2) {$\bullet$};
				\node at (6,-4) {$\bullet$};
			\end{tikzpicture}&
			\begin{tikzpicture}[scale=.35,baseline=(current bounding box.center),every node/.style={sloped,allow upside down}]
				
				\draw[very thick] (0,0) --node{\midarrow} (0,2);
				\draw[very thick] (2,0) --node{\midarrow} (2,2);
				\draw[very thick] (4,0) --node{\midarrow} (4,2);
				
				\draw[very thick] (6,0) --node{\midarrow} (4,0);
				\draw[very thick] (6,-2) --node{\midarrow} (4,-2);
				\draw[very thick] (6,-4) --node{\midarrow} (4,-4);
				
				\draw[very thick] (0,0) --node{\midarrow} (2,0);
				
				\draw[very thick] (2,-2) --node{\midarrow} (2,0);
				\draw[very thick] (4,-2) --node{\midarrow} (2,-2);
				
				\draw[very thick] (4,-2) --node{\midarrow} (4,-4);
				
				\draw[very thick] (2,0) --node{\midarrow} (4,0);
				\draw[very thick] (4,0) --node{\midarrow} (4,-2);
				
				\node at (0,2) {$\bullet$};
				\node at (2,2) {$\bullet$};
				\node at (4,2) {$\bullet$};
				
				\node at (0,0) {\color{violet}{$\bullet$}};
				\node at (2,0) {\color{cyan}{$\bullet$}};
				\node at (4,0) {$\bullet$};
				
				\node at (2,-2) {\color{orange}{$\bullet$}};
				\node at (4,-2) {\color{cyan}{$\bullet$}};
				
				\node at (4,-4) {\color{orange}{$\bullet$}};
				
				\node at (6,0) {$\bullet$};
				\node at (6,-2) {$\bullet$};
				\node at (6,-4) {$\bullet$};
			\end{tikzpicture}&
			\begin{tikzpicture}[scale=.35,baseline=(current bounding box.center),every node/.style={sloped,allow upside down}]
				
				\draw[very thick] (0,0) --node{\midarrow} (0,2);
				\draw[very thick] (2,0) --node{\midarrow} (2,2);
				\draw[very thick] (4,0) --node{\midarrow} (4,2);
				
				\draw[very thick] (6,0) --node{\midarrow} (4,0);
				\draw[very thick] (6,-2) --node{\midarrow} (4,-2);
				\draw[very thick] (6,-4) --node{\midarrow} (4,-4);
				
				\draw[very thick] (0,0) --node{\midarrow} (2,0);
				
				\draw[very thick] (2,0) --node{\midarrow} (2,-2);
				\draw[very thick] (2,-2) --node{\midarrow} (4,-2);
				
				\draw[very thick] (4,-2) --node{\midarrow} (4,-4);
				
				\draw[very thick] (4,0) --node{\midarrow} (2,0);
				\draw[very thick] (4,-2) --node{\midarrow} (4,0);
				
				\node at (0,2) {$\bullet$};
				\node at (2,2) {$\bullet$};
				\node at (4,2) {$\bullet$};
				
				\node at (0,0) {\color{violet}{$\bullet$}};
				\node at (2,0) {$\bullet$};
				\node at (4,0) {\color{teal}{$\bullet$}};;
				
				\node at (2,-2) {$\bullet$};
				\node at (4,-2) {$\bullet$};
				
				\node at (4,-4) {\color{orange}{$\bullet$}};
				
				\node at (6,0) {$\bullet$};
				\node at (6,-2) {$\bullet$};
				\node at (6,-4) {$\bullet$};
			\end{tikzpicture}\\\midrule
			\makecell{$(-1)^{\# \scalebox{.75}{\wloneC}+\# \scalebox{.75}{\wlinC}} w^{\# \raisebox{-1ex}{\scalebox{.75}{\wneC}}+\# \raisebox{-1ex}{\scalebox{.75}{\wswC}}}$\\
				$\times(w+2)^{\# \scalebox{.75}{\wloutC}}$}&
			$-1$&$w+2$&$w+2$&$w^2(w+2)$&$-(w+2)$\\\midrule
			$w^{\# \raisebox{-1ex}{\scalebox{.75}{\wnwC}}+\# \raisebox{-1ex}{\scalebox{.75}{\wseC}}}$&
			$w^3$&$w^2$&$w^2$&$1$&$w$\\\bottomrule
	\end{tabular}}
	\caption{An illustration of Problem~\ref{prob:bijective_proof} for $n=3$. We list all five DSASMs of order~$3$, their corresponding six-vertex model configurations and the weights related to the two generating functions in consideration (note the colour coding). In both cases, the sum of weights is $1+w+2 w^2+w^3$.}
	\label{tab:bijection}
\end{table}

\section{How to recover the classical Littlewood identity}
\label{sec:recovery}

In this section, we show how to recover the classical Littlewood identity~\eqref{eq:Littlewood} from our generalization in Theorem~\ref{thm:Robbins_Littlewood}. We multiply both sides of \eqref{eq:Robbins_Littlewood} by $w^{-\binom{n}{2}}$ and let $w\to\infty$ afterwards. For the left-hand side, consider DAMTs with weight function \eqref{eq:weight_DAMT}. Setting $u=v=1$ and multiplying by $w^{-\binom{n}{2}}$ changes the weight to
\[
w^{-\# \searrow - \# \swarrow}
\prod_{i=1}^n x_i^{\sum_{j=1}^{i} a_{i,j}-\sum_{j=1}^{i-1} a_{i-1,j} + \# \searrow \text{ in row~$(i-1)$ } - \# \swarrow \text{ in row~$(i-1)$}}.
\]
Letting $w\to\infty$ implies that only DAMTs with central entries above the bottom row remain. Consequently, the resulting DAMTs have strict increase along $\nearrow$-diagonals and along $\searrow$-diagonals. These DAMTs are in bijective correspondence with Gelfand--Tsetlin patterns, which can be seen as follows: For all $1\le i \le n$, we first subtract $i-1$ from the $i$th $\searrow$-diagonal and then $i-1$ from the $i$th $\swarrow$-diagonal; all diagonals are numbered from left to right. Since Gelfand--Tsetlin patterns are enumerated by Schur polynomials, the left-hand side of \eqref{eq:Robbins_Littlewood} reduces to $\prod_{i=1}^{n} x_i^{n-1} \sum_{\lambda} s_{\lambda} (x_1,\dots,x_n)$. (Note that these considerations now establish a combinatorial proof of Proposition~\ref{prop:RobbinsToSchur}~\ref{prop:RobbinsToSchur1} that is based on the combinatorial model for modified Robbins polynomials provided in Theorem~\ref{thm:gfun}.)

\begin{exm}
	We provide an example of the bijection between DAMTs with only central entries and Gelfand--Tsetlin patterns:
	\begin{equation*}
		\begin{tikzpicture}[baseline=(current bounding box.center),scale=0.65]
			\node at (0,0) {$1$};
			\node at (2,0) {$5$};
			\node at (4,0) {$8$};
			\node at (6,0) {$10$};
			\node at (1,1) {$2$};
			\node at (3,1) {$6$};
			\node at (5,1) {$9$};
			\node at (2,2) {$4$};
			\node at (4,2) {$7$};
			\node at (3,3) {$5$};
			
			\node at (1,.45) {\footnotesize $\downarrow$};
			\node at (3,.45) {\footnotesize $\downarrow$};
			\node at (5,.45) {\footnotesize $\downarrow$};
			\node at (2,1.45) {\footnotesize $\downarrow$};
			\node at (4,1.45) {\footnotesize $\downarrow$};
			\node at (3,2.45) {\footnotesize $\downarrow$};
		\end{tikzpicture}\longleftrightarrow
		\begin{tikzpicture}[baseline=(current bounding box.center),scale=0.65]
			\node at (0,0) {$1$};
			\node at (2,0) {$3$};
			\node at (4,0) {$4$};
			\node at (6,0) {$4$};
			\node at (1,1) {$1$};
			\node at (3,1) {$3$};
			\node at (5,1) {$4$};
			\node at (2,2) {$2$};
			\node at (4,2) {$3$};
			\node at (3,3) {$2$};
		\end{tikzpicture}
	\end{equation*}
\end{exm}

For the right-hand side of \eqref{eq:Robbins_Littlewood}, consider the six-vertex model configurations. Multiplying with $w^{-\binom{n}{2}}$ results in dividing the weight of every bulk vertex by $w$. Taking the limit $w\to\infty$ then excludes the local configurations $\raisebox{-1.5ex}{\wnw}$ and $\raisebox{-1.5ex}{\wse}$, since the respective weights tend to zero. The only six-vertex model configuration that avoids these two local configurations is the one that corresponds to the DSASM with $1$s along the antidiagonal and $0$s everywhere else, the weight of which is $\prod_{i=1}^{n} x_i^{n-1}$. An alternative way to obtain the limit of the right-hand side is to use the corresponding identity in \eqref{eq:Robbins_Littlewood} together with the following version of Schur's Pfaffian identity\footnote{This identity follows from the classical Schur's Pfaffian identity $\pf_{1 \le i < j \le 2n} \left( \frac{x_j-x_i}{x_i + x_j} \right) = \prod_{1 \le i < j \le 2n} \frac{x_j-x_i}{x_i + x_j}$ after applying the transformation $x_i \mapsto \frac{1+x_i}{1-x_i}$ for all $1 \le i \le 2n$. For the odd case, we additionally set $x_{2n}=1$ and rearrange appropriately.} \cite{Sch11}:
\[
\pf_{\chi_{\mathrm{even}}(n) \le i < j \le n} \left(
\begin{cases}
	1, & i=0,\\
	\frac{x_j-x_i}{1-x_i x_j},& i\ge 1
\end{cases} \right)
= \prod_{1 \le i < j \le n} \frac{x_j-x_i}{1- x_i x_j}.
\]		
After dividing both sides by $\prod_{i=1}^{n} x_i^{n-1}$, we eventually obtain \eqref{eq:Littlewood}.

Another way to recover the classical Littlewood identity is by setting $w=0$, in which case Proposition~\ref{prop:RobbinsToSchur}~\ref{prop:RobbinsToSchur2} implies that the left-hand side of \eqref{eq:Robbins_Littlewood} reduces to
\begin{multline*}
	\sum_{0 \le k_1 < \dots < k_n} s_{(k_n-n+1,k_{n-1}-n+2,\dots,k_1)}(x_{1},\ldots,x_{n}) \prod_{1 \le i < j \le n} (1+x_i x_j)\\
	= 
	\sum_{\lambda} s_{\lambda} (x_1,\dots,x_n) \prod_{1 \le i < j \le n} (1+x_i x_j).
\end{multline*}

On the other hand, the right-hand side of \eqref{eq:Robbins_Littlewood} becomes
\begin{equation*}
	\prod_{i=1}^n \frac{1}{1-x_i} \prod_{1 \le i < j \le n} \frac{1+x_i x_j}{1-x_i x_j}
\end{equation*}
due to the following transformation of Schur's Pfaffian identity\footnote{In this case, we apply the transformation $x_i \mapsto \frac{x_i}{x_i^2-1}$ for all $1 \le i \le 2n$ to the classical Schur's Pfaffian identity.}:
\begin{equation*}
	\pf_{\chi_{\mathrm{even}}(n) \le i < j \le n} \left(
	\begin{cases}
		1, & i=0,\\
		\frac{(x_j-x_i)(1+x_i x_j)}{(x_i+x_j)(1-x_i x_j)},& i\ge 1
	\end{cases} \right)
	= \prod_{1 \le i < j \le n} \frac{(x_j-x_i)(1+x_i x_j)}{(x_i+x_j)(1-x_i x_j)}.
\end{equation*}
Dividing both sides by $\prod_{1 \le i < j \le n} (1+x_i x_j)$ yields the classical Littlewood identity~\eqref{eq:Littlewood}.

The question is whether there are also combinatorial means to establish both sides. As for the left-hand side, 
\begin{equation*}
	R_{(k_{1},\ldots,k_{n})}^{\ast}(x_{1},\ldots,x_{n};1,1,0) = s_{(k_n-n+1,k_{n-1}-n+2,\dots,k_1)}(x_{1},\ldots,x_{n}) \prod\limits_{1 \le i < j \le n} (1+x_i x_j)
\end{equation*}
follows using Theorem~\ref{thm:gfun} and urban renewal as indicated in \cite{Fis23}. Regarding the right-hand side, we need to show 
\begin{equation}
	\label{eq:2enum} 
	\left. Z_{\DSASM}(x_1,\ldots,x_n) \right\vert_{w=0} = \prod_{1 \le i < j \le n} (1+x_i x_j).
\end{equation} 
We modify the weights in Table~\ref{tab:six-vertex weights} as follows: first we multiply the left boundary weights with $-\tau(x)$, which results in 
total in an extra factor $(-1)^n \prod_{i=1}^n \tau(x_i)$ of the partition function. By using the fact that, along $\raisebox{-.5ex}{\wL}$-shaped paths, there is one more occurrence of $1$ than of $-1$, and by further multiplying by $\prod_{i=1}^n \tau(x_i)$, we can ensure that the weights of the local configurations corresponding 
to the matrix entries $-1$ are equal to $1$. Finally, we can suppress the minus signs in the left boundary weights for $\raisebox{-.25ex}{\wlout}$ and 
$\raisebox{-.25ex}{\wlin}$ as there is the same number of such local configurations, see the proof of Proposition~\ref{prop:polynomial}. After setting $w=0$, we 
obtain the weights provided in Table~\ref{tab:six-vertex weightsw=0}.

\begin{table}[htbp]
	\centering
	\begin{tabular}{llc}\toprule
		bulk weights & left boundary weights & matrix entries\\\midrule\medskip
		$W\Big(\raisebox{-1.5ex}{\wone};x,y\Big)= (1+x^2)(1+y^2)$ & $W\Big(\raisebox{-.25ex}{\wlone};x\Big)=1+x^2$ & $1$\\\medskip
		$W\Big(\raisebox{-1.5ex}{\wmone};x,y\Big)=1$ & $W\Big(\raisebox{-.25ex}{\wlmone};x\Big)= 1$ & $-1$\\\medskip
		$W\Big(\raisebox{-1.5ex}{\wne};x,y\Big) = x + y $ & $W\Big(\raisebox{-.25ex}{\wlout};x\Big)=\sqrt{2}x$ & $0$\\\medskip
		$W\Big(\raisebox{-1.5ex}{\wsw};x,y\Big) = x + y$ & $W\Big(\raisebox{-.25ex}{\wlin};x\Big)=\sqrt{2}x$ & $0$\\\medskip
		$W\Big(\raisebox{-1.5ex}{\wnw};x,y\Big) = 1-x y$ & & $0$\\\medskip
		$W\Big(\raisebox{-1.5ex}{\wse};x,y\Big) = 1-x y$ & & $0$\\\bottomrule
	\end{tabular}
	\caption{Modified vertex weights in the six-vertex model for $w=0$.}
	\label{tab:six-vertex weightsw=0}
\end{table}

The following is based on the relation between the $2$-enumeration of ASMs and the tilings of the Aztec diamond, see \cite{Elk92a,Elk92b} and \cite[Remark 4.3]{Ciucu97}: Building on the considerations presented in \cite[Section 5.7]{ABF20}, the partition function $Z_{\DSASM}(x_1,\allowbreak \ldots,x_n)$ with respect to the weights in Table~\ref{tab:six-vertex weightsw=0} is the weighted enumeration of the matchings of the edge-weighted graph sketched in Figure~\ref{fig:matching}, where all vertices are covered except for possibly some on the left boundary.

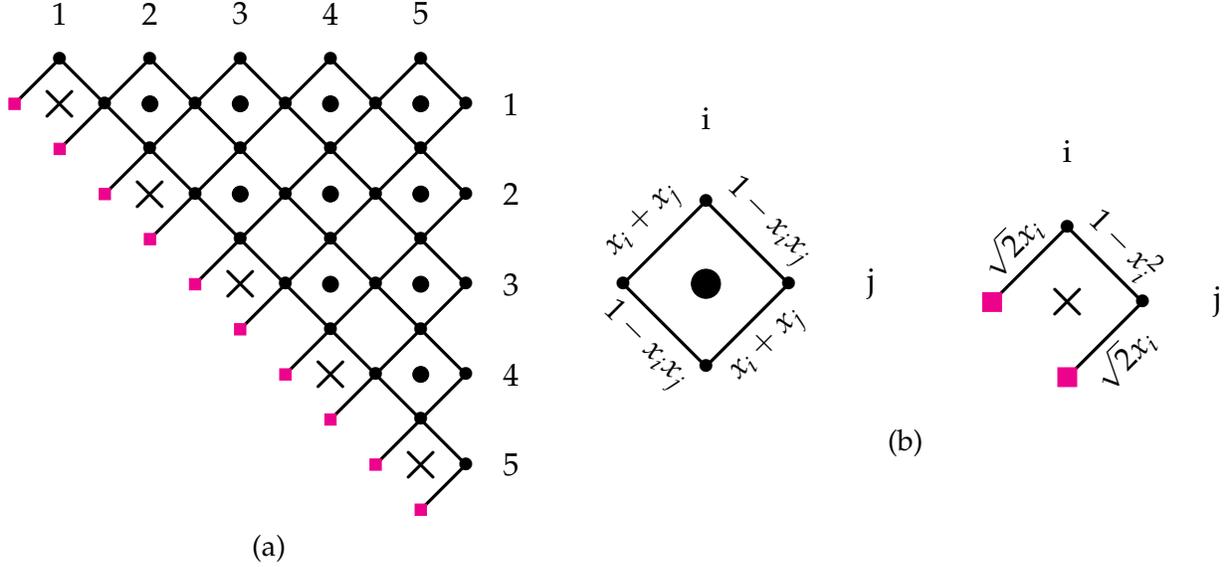
\begin{figure}[htbp]
	\centering
	\begin{subfigure}[t]{.5\textwidth}
		\centering
		\begin{tikzpicture}[scale=.25]
			\foreach \x in {2,6,...,20}
			{\node at (\x,2) {$\bullet$};}
			\foreach \x in {4,8,...,20}
			{\node at (\x,0) {$\bullet$};}
			\foreach \x in {6,10,...,20}
			{\node at (\x,-2) {$\bullet$};}
			\foreach \x in {8,12,...,20}
			{\node at (\x,-4) {$\bullet$};}
			\foreach \x in {10,14,18}
			{\node at (\x,-6) {$\bullet$};}
			\foreach \x in {12,16,20}
			{\node at (\x,-8) {$\bullet$};}
			\foreach \x in {14,18}
			{\node at (\x,-10) {$\bullet$};}
			\foreach \x in {16,20}
			{\node at (\x,-12) {$\bullet$};}
			\foreach \x in {18}
			{\node at (\x,-14) {$\bullet$};}
			\node at (20,-16) {$\bullet$};
			
			\draw[very thick] (0,0) -- (2,2) -- (20,-16) -- (18,-18);
			\draw[very thick] (2,-2) -- (6,2) -- (20,-12) -- (16,-16);
			\draw[very thick] (4,-4) -- (10,2) -- (20,-8) -- (14,-14);
			\draw[very thick] (6,-6) -- (14,2) -- (20,-4) -- (12,-12);
			\draw[very thick] (8,-8) -- (18,2) -- (20,0) -- (10,-10);
			
			\foreach \x in {0,2,...,18}
			{\draw[color=magenta,fill]  (\x-.25,-\x-.25) rectangle ++(.5,.5);}
			
			\filldraw (6,0) circle (10pt);
			\filldraw (10,0) circle (10pt);
			\filldraw (14,0) circle (10pt);
			\filldraw (18,0) circle (10pt);
			
			\filldraw (10,-4) circle (10pt);
			\filldraw (14,-4) circle (10pt);
			\filldraw (18,-4) circle (10pt);
			
			\filldraw (14,-8) circle (10pt);
			\filldraw (18,-8) circle (10pt);
			
			\filldraw (18,-12) circle (10pt);
			
			\node at (2,0) {\scalebox{1.5}{$\boldsymbol{\times}$}};
			\node at (6,-4) {\scalebox{1.5}{$\boldsymbol{\times}$}};
			\node at (10,-8) {\scalebox{1.5}{$\boldsymbol{\times}$}};
			\node at (14,-12) {\scalebox{1.5}{$\boldsymbol{\times}$}};
			\node at (18,-16) {\scalebox{1.5}{$\boldsymbol{\times}$}};
			
			\node at (2,4) {$1$};
			\node at (6,4) {$2$};
			\node at (10,4) {$3$};
			\node at (14,4) {$4$};
			\node at (18,4) {$5$};
			
			\node at (22,0) {$1$};
			\node at (22,-4) {$2$};
			\node at (22,-8) {$3$};
			\node at (22,-12) {$4$};
			\node at (22,-16) {$5$};
		\end{tikzpicture}
		\caption{}
		\label{subfig:graph}
	\end{subfigure}
	\hfill
	\begin{subfigure}[t]{.45\textwidth}
		\centering
		\begin{tikzpicture}[scale=.5]
			\node at (0,8) {$\bullet$};
			\node at (2,10) {$\bullet$};
			\node at (4,8) {$\bullet$};
			\node at (2,6) {$\bullet$};
			
			\draw[very thick] (0,8) -- (2,10) -- (4,8) -- (2,6) -- cycle;
			
			\node[rotate=45] at (.5,9.5) {$x_i+x_j$};
			\node[rotate=-45] at (3.5,9.5) {$1-x_i x_j$};
			\node[rotate=45] at (3.5,6.5) {$x_i+x_j$};
			\node[rotate=-45] at (.5,6.5) {$1-x_i x_j$};
			
			\filldraw (2,8) circle (10pt);
			
			\node at (2,11.5) {$\mathrm{i}$};
			\node at (5.5,8) {$\mathrm{j}$};
			
			\node at (2,2) {$\bullet$};
			\node at (4,0) {$\bullet$};
			
			\draw[very thick] (0,0) -- (2,2) -- (4,0) -- (2,-2);
			
			\draw[color=magenta,fill]  (-.25,-.25) rectangle ++(.5,.5);
			\draw[color=magenta,fill]  (1.75,-2.25) rectangle ++(.5,.5);
			
			\node[rotate=45] at (.5,1.5) {$\sqrt{2}x_i$};
			\node[rotate=-45] at (3.5,1.5) {$1-x_i^2$};
			\node[rotate=45] at (3.5,-1.5) {$\sqrt{2}x_i$};
			
			\node at (2,0) {\scalebox{1.5}{$\boldsymbol{\times}$}};
			
			\node at (2,3.5) {$\mathrm{i}$};
			\node at (5.5,0) {$\mathrm{j}$};
		\end{tikzpicture}
		\caption{}
		\label{subfig:weights}
	\end{subfigure}
	\caption{The partition function~$Z_{\DSASM}(x_1,\ldots,x_n)$ with respect to the weights in Table~\ref{tab:six-vertex weightsw=0} is given by the generating function of perfect matchings of the grid graph in \subref{subfig:graph} (here for $n=5$), where the red vertices on the left boundary may remain uncovered. The corresponding edge weights are provided in \subref{subfig:weights}.}
	\label{fig:matching}
\end{figure}

We pose it as an open problem to provide a combinatorial proof that this weighted enumeration is $\prod_{1 \le i \le j \le n} (1+x_i x_j)$, where the additional factor $\prod_{i=1}^n (1 + x_i^2)$ appears compared to \eqref{eq:2enum} due to the modification of weights. A natural question is whether \deff{urban renewal} can be applied here as in the case of the Aztec diamond, see for instance \cite{Propp23}.

\section{A conjectural generalization of Theorem~\ref{thm:Robbins_Littlewood} for fully inhomogeneous spin Hall--Littlewood polynomials}\label{sec:conjecture}

In this section, we present a conjectural Littlewood identity for fully inhomogeneous spin Hall--Littlewood polynomials that generalizes Theorem~\ref{thm:Robbins_Littlewood}, along with possible consequences including a new Littlewood identity for Hall--Littlewood polynomials. It is planned to study the identity for fully inhomogeneous spin Hall--Littlewood polynomials in a forthcoming paper.

Recall that, for a partition $\lambda$, $m_r(\lambda)$ denotes the number of parts equal to $r$.

\begin{conj} 
	\label{conj:fully} 
	Let $n$ be a positive integer and $p$ a non-negative integer. Setting  
	$\xi_j=1$ for all $j$ in $\F_{\lambda}(u_1,\ldots,u_n)$, we conjecture
	\begin{multline} 
		\label{conjid:fully}
		\sum_{\lambda_1 \ge \lambda_2 \ge \ldots \ge \lambda_n \ge 0} 
		\prod_{r \ge 0} \frac{(-s_r;q^{\half})_{m_r(\lambda)}}{(q^{\half};q^{\half})_{m_r(\lambda)}} \F_{\lambda}(u_1,\ldots,u_n)
		=  \prod_{i=1}^{n} \frac{1+q^{\half}}{1- u_i}
		\prod_{1 \le i < j \le n} \frac{1-q  u_i u_j}{u_i-u_j} \\
		\times \pf_{\chi_{\mathrm{even}}(n) \le i < j \le n} \left(
		\begin{cases}
			1, & i=0,\\
			\frac{(u_i-u_j)((1+q)(1- q^{\half} u_i u_j) + (u_i + u_j)(q^{\half}- q))}{(1+q^{\half})(1- u_i u_j)(1- q u_i u_j)},& i\ge 1,
		\end{cases} \right) 
	\end{multline}
	where $s_j=s$ for all $j \ge p$ and $u_i = \frac{s+x_i}{1 + s x_i}$ for all $i$, and both sides are considered as power series in $x_1,x_2,\ldots,x_n$.
\end{conj}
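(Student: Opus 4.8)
The plan is to mimic the first, recursion-based proof of Theorem~\ref{thm:Robbins_Littlewood}. Writing $\psi_k(u) \coloneqq \frac{1-q}{1-s_k u}\prod_{j=0}^{k-1}\frac{u-s_j}{1-s_j u}$, the antisymmetrizer formula for $\F_\lambda$ with $\xi_j=1$ becomes $\F_\lambda(u_1,\dots,u_n) = \prod_{1\le i<j\le n}(u_i-u_j)^{-1}\,\asym_{u_1,\dots,u_n}\left[\prod_{1\le i<j\le n}(u_i-q u_j)\prod_{i=1}^n \psi_{\lambda_i}(u_i)\right]$. After multiplying by the multiplicity weight $\prod_{r\ge0}\frac{(-s_r;q^{1/2})_{m_r(\lambda)}}{(q^{1/2};q^{1/2})_{m_r(\lambda)}}$ and summing over $\lambda_1\ge\cdots\ge\lambda_n\ge0$, the first task is to carry the sum inside the antisymmetrizer and evaluate it in closed form. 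Exactly as $\sum_{0\le k_1<\cdots<k_n}\prod_i x_i^{k_i}$ factors through the staircase in the Robbins case, the chain $\lambda_1\ge\cdots\ge\lambda_n$ should decouple into nested geometric-type sums in the partial products $\psi_\bullet(u_1)\cdots\psi_\bullet(u_i)$, and the weight $(-s_r;q^{1/2})_{m_r}/(q^{1/2};q^{1/2})_{m_r}$ is precisely what turns each such sum into a rescaled $q$-binomial series that telescopes once $s_r$ is constant for $r\ge p$. The result should be an antisymmetrizer of a product of simple rational functions of the $u_i$, hence a symmetric rational function $F_n$ obeying a recursion $F_n(u_1,\dots,u_n) = c(u_1,\dots,u_n)\sum_{k=1}^n\Bigl(\prod_{i\ne k}g(u_i,u_k)\Bigr)F_{n-1}(u_1,\dots,\widehat{u_k},\dots,u_n)$ with base case $F_1(u_1)=\frac{1+q^{1/2}}{1-u_1}$.

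The second step is to show that the right-hand side of \eqref{conjid:fully} satisfies the very same recursion, running parallel to the even/odd case distinction in the first proof of Theorem~\ref{thm:Robbins_Littlewood}: one applies the recursion to the induction hypothesis, reorganizes the resulting sum as the expansion of a bordered Pfaffian along its last column, pulls out the scalar prefactors, splits off one summand, and performs simultaneous elementary row and column operations on the underlying skew-symmetric matrix until it decouples. These operations require a $q$-deformation of Lemma~\ref{lem:MatrixOperationEven} --- a \emph{scalar} rational identity and an $i$-indexed one supplying the exact row/column combinations to add --- which can be proved by the method used there: clear denominators to reach polynomial identities, verify vanishing at $x_s=x_t$ and at a sufficient list of special evaluations (now deformed by $q$ and $s$, the analogues of $x_s=0$, $x_s=-x_i/(1+w x_i)$, and $x_s=1/x_{s'}$), control degrees, compare leading coefficients, and pin down the final constant via the \emph{homogeneous-at-infinity} specializations $(x_1,x_1^{-1},x_3,x_3^{-1},\dots)$.

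A second and possibly cleaner route is to adapt the sketched Izergin--Korepin proof: after the normalization $\widetilde F = \prod_i(1-u_i)\prod_{i<j}(1-u_i u_j)F$, show that the $\F$-side is symmetric, takes the prescribed value at $n=1$, is rational in the last variable with bounded numerator and denominator degrees, and satisfies the $n+1$ evaluations at $u_n=s$ (i.e.\ $x_n=0$), at $u_n=1$, and at $u_n=u_i^{-1}$ for $1\le i\le n-1$; then verify the Pfaffian side enjoys the same properties, the $u_n=u_i^{-1}$ evaluation reducing through a Schur--Pfaffian-type manipulation. The evaluations of the $\F$-side follow once more from the $q$-binomial summation together with the recursion, the $u_n=u_i^{-1}$ case being the delicate one, as in Theorem~\ref{thm:Robbins_Littlewood}.

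The main obstacle is the closed-form evaluation of the $\lambda$-sum inside the antisymmetrizer: one must isolate the precise $q$-series identity collapsing $\sum_{\lambda_1\ge\cdots\ge\lambda_n\ge0}\prod_{r\ge0}\frac{(-s_r;q^{1/2})_{m_r(\lambda)}}{(q^{1/2};q^{1/2})_{m_r(\lambda)}}\prod_i\psi_{\lambda_i}(u_i)$ to a form the antisymmetrizer can absorb, and confirm that the partial homogeneity $s_j=s$ for $j\ge p$ is exactly what makes the tail converge and the free inhomogeneities $s_0,\dots,s_{p-1}$ disappear from the answer --- the latter a strong built-in consistency check. A close second is assembling the correct $q$-analogue of Lemma~\ref{lem:MatrixOperationEven}; the bookkeeping is heavier with the extra parameters $q$ and $s$ but structurally unchanged. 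Throughout, specializing $p=0$ and $s=q^{-1/2}$ (so $w=q^{1/2}+q^{-1/2}$) should collapse the whole argument onto the proof of Theorem~\ref{thm:Robbins_Littlewood}, giving a stepwise sanity check.
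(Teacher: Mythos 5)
The statement you are addressing is not proved in the paper at all: it is stated as Conjecture~\ref{conj:fully}, verified only for $n=1$ (all $p$), $n=2$ with $p=0,1$, and $n=3,4$ with $p=0$, with the proof explicitly deferred to a forthcoming paper. So there is no paper proof to match, and your text is a strategy outline rather than a proof; as such it has genuine gaps at precisely the points that make this a conjecture. The central one is your first step: in the proof of Theorem~\ref{thm:Robbins_Littlewood}, the sum over strictly increasing $\boldk$ could be pushed inside the antisymmetrizer because $\sum_{0\le k_1<\cdots<k_n}\prod_i x_i^{k_i}$ factors as a product of geometric series, i.e.\ the summand is a product over $i$ of a function of $k_i$ alone. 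Here the weight $\prod_{r\ge0}(-s_r;q^{1/2})_{m_r(\lambda)}/(q^{1/2};q^{1/2})_{m_r(\lambda)}$ depends on the multiplicities $m_r(\lambda)$, which couple the parts $\lambda_1,\dots,\lambda_n$ to one another; it does not factor over $i$, so it cannot simply be slipped inside $\asym_{u_1,\dots,u_n}$ and summed termwise by ``nested geometric-type sums.'' You assert this ``should'' decouple and telescope, but give no identity accomplishing it, and the paper itself flags that for fully inhomogeneous spin Hall--Littlewood functions the classical-type Littlewood sum does not reduce to the known even-column variants via Pieri rules --- an indication that this collapse is the real obstruction, not a routine $q$-deformation of the geometric-series step. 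Without it you have neither the recursion for the left-hand side nor the evaluations needed for your alternative Izergin--Korepin route.

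The second missing ingredient is the asserted $q$-deformation of Lemma~\ref{lem:MatrixOperationEven}. In the $w$-case that lemma is a pair of delicate rational identities whose proof hinges on finding enough vanishing specializations (including the ad hoc substitutions $x_s=-x_i/(1+wx_i)$ and the paired specialization $(x_1,x_1^{-1},\dots)$) to exhaust the degree; with the extra parameters $q$, $s$, and the inhomogeneities $s_0,\dots,s_{p-1}$ it is not evident what the analogous factorizations and specializations are, nor that the inhomogeneities drop out of the row/column combination coefficients --- you note this as a consistency check but do not establish it. Your sanity checks (the base case $F_1(u_1)=(1+q^{1/2})/(1-u_1)$ and the degeneration to Theorem~\ref{thm:Robbins_Littlewood} at constant $s_j=\pm q^{\mp1/2}$) are consistent with what the paper records, and the overall plan is a reasonable research programme; but as it stands it is a proposal whose two load-bearing lemmas are unproved, so it does not constitute a proof of the conjecture.
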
 
The conjecture has been checked for $n=1$ and arbitrary~$p$, for $n=2$ if $p=0,1$, and for $n=3,4$ if $p=0$. 

Next, we demonstrate that the case 
$s_j=-q^{-\half}$ is equivalent to Theorem~\ref{thm:Robbins_Littlewood}. Indeed, it holds
\begin{multline*} 
	\sum_{\lambda_1 \ge \lambda_2 \ge \ldots \ge \lambda_n \ge 0} 
	\prod_{r \ge 0} \frac{(q^{-\half};q^{\half})_{m_r(\lambda)}}{(q^{\half};q^{\half})_{m_r(\lambda)}} F_{\lambda}(u_1,\ldots,u_n) \\ 
	\shoveleft{ =   q^{\binom{n}{2}/2} \prod_{i=1}^n (q^{\half}-x_i)}\\
	\left.\times\sum_{0 \le \lambda_n < \lambda_{n-1} < \ldots < \lambda_1} 
	R^{\ast}_{(\lambda_n,\ldots,\lambda_1)}(x_1,\ldots,x_n;1,1,-q^{\half}-q^{-\half})
	\right|_{x_i = \frac{u_i+q^{-\half}}{1+q^{-\half} u_i}}\\
	\shoveleft{ = \prod_{i=1}^n \frac{1+q^{\half}}{1-u_i} \prod_{1 \le i < j \le n} \frac{1-q u_i u_j}{u_i-u_j}}\\
	\times \pf_{\chi_{\mathrm{even}}(n) \le i < j \le n} \left(
	\begin{cases}
		1, & i=0,\\
		\frac{(u_i-u_j)((1+q)(1- q^{\half} u_i u_j) + (u_i + u_j)(q^{\half}- q))}{(1+q^{\half})(1- u_i u_j)(1- q u_i u_j)},& i\ge 1,
	\end{cases} \right)
\end{multline*} 
where we have used Theorem~\ref{thm:Robbins_Littlewood} for the last equality.
A similar calculation shows that the case $s_j=-q^{\half}$ of \eqref{conjid:fully} is equivalent to Corollary~\ref{cor:Littlewood}.

A consequence of Conjecture~\ref{conj:fully} would be a certain Littlewood identity for Hall--Littlewood polynomials~$P_{\lambda}(x_1,\ldots,x_n;q)$, which we recall are defined as
\begin{equation*}
	(1-q)^n \prod_{r \ge 0} \frac{1}{(q;q)_{m_r(\lambda)}} \frac{\asym_{x_{1},\ldots,x_{n}}\left[\prod\limits_{1\le i<j\le n}(x_i- q x_j)\prod\limits_{i=1}^{n}x_{i}^{\lambda_{i}}\right]}{\prod\limits_{1\le i<j\le n}(x_{i}-x_{j})}
\end{equation*}
and hence
\begin{equation*}
	\left. 
	P_{\lambda}(x_1,\ldots,x_n;q) = \prod_{r \ge 0} \frac{1}{(q;q)_{m_r(\lambda)}} F_{\lambda}(x_1,\ldots,x_n) \right|_{\xi_i=1,s_j=0}.
\end{equation*}
Conjecture~\ref{conj:fully} would thus imply the following Littlewood identity for Hall--Littlewood polynomials
\begin{multline}\label{eq:new_Hall_Littlewood} 
	\sum_{\lambda_1 \ge \lambda_2 \ge \ldots \ge \lambda_n \ge 0}  \prod_{r \ge 0} (-q^{\half};q^{\half})_{m_r(\lambda)}  P_{\lambda}(x_1,\ldots,x_n;q)
	= \prod_{i=1}^{n} \frac{1+q^{\half}}{1- x_i}
	\prod_{1 \le i < j \le n} \frac{1-q  x_i x_j}{x_i-x_j} \\
	\times \pf_{\chi_{\mathrm{even}}(n) \le i < j \le n} \left(
	\begin{cases}
		1, & i=0,\\
		\frac{(x_i-x_j)((1+q)(1- q^{\half} x_i x_j) + (x_i + x_j)(q^{\half}- q))}{(1+q^{\half})(1- x_i x_j)(1- q x_i x_j)},& i\ge 1,
	\end{cases} \right)
\end{multline}
where we have used 
\begin{equation*}
	\frac{(q;q)_{m}}{(q^{\half};q^{\half})_{m}}  = (-q^{\half};q^{\half})_{m}.
\end{equation*}

Note that the left-hand side of \eqref{eq:new_Hall_Littlewood} is very similar to the left-hand side of the following Littlewood identity for Hall--Littlewood polynomials by Kawanaka \cite[(1.21)]{Kaw99}, which was essentially proved in \cite{Kaw91}:
\begin{equation}\label{eq:Kawanaka} 
	\sum_{\lambda_1 \ge \lambda_2 \ge \ldots \ge \lambda_n \ge 1}  \prod_{r \ge 1} (-q^{\half};q^{\half})_{m_r(\lambda)}  P_{\lambda}(x_1,\ldots,x_n;q)
	= \prod_{i=1}^{n} \frac{1+q^{\half} x_i}{1- x_i}
	\prod_{1 \le i < j \le n} \frac{1-q  x_i x_j}{1-x_i x_j}.
\end{equation}
These identities differ in that the former identity sums over partitions allowing parts equal to zero, whereas Kawanaka's identity sums over partitions with strictly positive parts.

Now, we present an even more general conjectural identity that involves an additional parameter~$\gamma$, compare
with Gavrilova's identity \cite[(1)]{Gav23}; it also includes Kawanaka's identity as a special case.

\begin{conj}
	\label{genconj}
	Let $n$ be a positive integer and $p$ a non-negative integer. Setting  
	$\xi_j=1$ for all $j$ in $\F_{\lambda}(u_1,\ldots,u_n)$, we conjecture
	\begin{multline*} 
		\sum_{\lambda_1 \ge \lambda_2 \ge \ldots \ge \lambda_n \ge 0} 
		\frac{(-\gamma q^{\half};q^{\half})_{m_0(\lambda)}}{(q;q)_{m_0(\lambda)}} (-\gamma^{-1}s_0;q^{\half})_{m_0(\lambda)} \prod_{r \ge 1} \frac{(-s_r;q^{\half})_{m_r(\lambda)}}{(q^{\half};q^{\half})_{m_r(\lambda)}}  \F_{\lambda}(u_1,\ldots,u_n)\\
		=  \prod_{i=1}^n \frac{1+q^{\half}}{1-u_i}
		\prod_{1 \le i < j \le n} \frac{1-q  u_i u_j}{u_i-u_j}\\ 
		\times \pf_{\chi_{\mathrm{even}}(n) \le i < j \le n} \left(
		\begin{cases}
			1 + (\gamma-1) \frac{(q^{\half} - \gamma^{-1} s_0)(1-u_j)}{(1+q^{\half}) (1- s_0 u_j)}, & i=0,\\
			\frac{(u_i-u_j)((1+q)(1- q^{\half} u_i u_j) + (u_i + u_j)(q^{\half}- q)  +
				(\gamma-1) f(u_i,u_j))}{(1+q^{\half})(1- u_i u_j)(1- q u_i u_j)},& i\ge 1,
		\end{cases} \right) 
	\end{multline*}
	where 
	\begin{multline*} 
		f(u,v) \coloneq 
		\frac{(q^{\half} - \gamma^{-1} s_0) (1-u v)}{(1+q^{\half})(1-s_0 u) (1-s_0 v)}
		\left( (1+ q^{\half})(1+ \gamma^{-1} s_0) (1 + q \gamma u v) \right. \\
		\left. +(1 + \gamma)(q - \gamma^{-1} s_0)(1 - q^{\half} u v) - (1 + \gamma) q^{\half} (q^{\half} + \gamma^{-1} s_0) (u + v) \right) 
	\end{multline*} 	
	as well as $s_j=s$ for all $j \ge p$ and  $u_i = \frac{s+x_i}{1 + s x_i}$ for all $i$, and both sides are considered as power series in $x_1,x_2,\ldots,x_n$.
\end{conj}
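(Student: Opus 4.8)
The plan is to attack Conjecture~\ref{genconj} in two stages. First prove the specialisation $\gamma=1$, which is exactly Conjecture~\ref{conj:fully}: the $(\gamma-1)$-corrections to every Pfaffian entry vanish, and the $m_0(\lambda)$-factor collapses via $\frac{(-q^{1/2};q^{1/2})_{m_0}}{(q;q)_{m_0}}(-s_0;q^{1/2})_{m_0}=\frac{(-s_0;q^{1/2})_{m_0}}{(q^{1/2};q^{1/2})_{m_0}}$, so the whole identity becomes \eqref{conjid:fully}. Then bootstrap from $\gamma=1$ to general~$\gamma$ by exploiting that $\gamma$ enters the right-hand side only through entries that are affine-linear in~$\gamma$, and enters the left-hand side only through the multiplicity $m_0(\lambda)$ of zero parts. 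The already-established equivalences of the cases $s_j=-q^{-1/2}$ and $s_j=-q^{1/2}$ with Theorem~\ref{thm:Robbins_Littlewood} and Corollary~\ref{cor:Littlewood} should be used throughout as anchor checks.

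For the $\gamma=1$ case I would follow the elementary route of the first proof of Theorem~\ref{thm:Robbins_Littlewood}. Using the antisymmetrizer formula for $\F_\lambda$ with $\xi_j=1$ and summing the geometric-type series over $\lambda_1\ge\cdots\ge\lambda_n\ge 0$—the summation being governed by the Cauchy-type identity of \cite{P21} under the specialisation recalled in Section~\ref{sec:robbins}—one obtains a single closed antisymmetrizer expression for the left-hand side. From it one extracts a recursion in~$n$ by isolating the terms carrying the top power of~$u_n$, mirroring \eqref{eq:rec_Littlewood} and \eqref{eq:rec_Littlewood_tilde}. It then remains to show that the Pfaffian on the right satisfies the same recursion; expanding the Pfaffian along its last row and column and reorganising reduces this, exactly as in Section~\ref{sec:mainproof}, to a Pfaffian identity in the spirit of \eqref{eq:2Pfaffians} and \eqref{eq:2Pfaffians_Odd}, whose verification rests on an inhomogeneous $q$-analogue of Lemma~\ref{lem:MatrixOperationEven}. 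The right row and column operations should be read off from the known $s_j=-q^{\pm1/2}$ specialisations.

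For the passage to general~$\gamma$, observe that each Pfaffian entry in Conjecture~\ref{genconj} has the shape $a_{i,j}^{(0)}+(\gamma-1)b_{i,j}$ with $a_{i,j}^{(0)}$ the entry from Conjecture~\ref{conj:fully}, and that $b_{0,j}$ and the $b_{i,j}$ for $i\ge 1$ all factor through $\frac{q^{1/2}-\gamma^{-1}s_0}{1-s_0u_j}$ and $\frac{q^{1/2}-\gamma^{-1}s_0}{1-s_0u_i}$ in a near-rank-one way. On the combinatorial side I would split the sum according to $m_0(\lambda)=k$ and use the column-removal/branching structure of the fully inhomogeneous spin Hall--Littlewood functions (those with no zero parts are, up to an explicit prefactor, the same family with the inhomogeneities shifted). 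One then expects the $\gamma$-dependence to match term by term in~$k$ after a standard Pfaffian bordering argument that absorbs the rank-one correction $(\gamma-1)b_{i,j}$ into one extra row and column—precisely the device of the auxiliary rows and columns already used in Section~\ref{sec:mainproof}.

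The main obstacle will be the inhomogeneous $q$-deformed analogue of Lemma~\ref{lem:MatrixOperationEven}. Even in the Robbins case this was a multivariate polynomial identity requiring a careful root count together with several nonobvious specialisations (such as $x_s=-x_i/(1+wx_i)$ and the pairing $x_i\mapsto x_{i-1}^{-1}$), and with $q$ and the inhomogeneities present it becomes a rational-function identity in many variables; identifying the right row and column operations and the right specialisations to pin down the residual scalar factor is where the real work lies. Should the recursion route prove too unwieldy, the viable fallback is the Izergin--Korepin technique sketched for Theorem~\ref{thm:Robbins_Littlewood}: list symmetry, an initial condition, sharp degree bounds in~$u_n$ for a suitably normalised function, and $n+1$ evaluations of it—at $u_n=0$, at the value making the boundary weight degenerate, and at the $n-1$ exchange points tied to the other variables—where the evaluations are controlled by the exchange/Yang--Baxter relations for $\F_\lambda$ available in \cite{BP18,Gav23}; the difficulty there is repackaging those relations into clean evaluations of the normalised partition function.
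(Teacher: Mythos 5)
There is no ``paper proof'' for you to match here: the statement you were given is Conjecture~\ref{genconj}, which the paper does not prove. The authors only report that it has been checked for $n=1$, for $n=2$ with $p=0,1$, and for $n=3,4$ with $p=0$, and they defer its study to a forthcoming paper; the same is true of Conjecture~\ref{conj:fully}, on which your whole plan is anchored. Your observation that $\gamma=1$ collapses the statement to Conjecture~\ref{conj:fully} is correct as algebra (indeed $(q;q)_m=(q^{1/2};q^{1/2})_m(-q^{1/2};q^{1/2})_m$, and the $(\gamma-1)$-corrections in the Pfaffian entries vanish), but since that base case is itself open, the reduction proves nothing; your proposal is a research programme, not a proof.

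The concrete gaps are exactly the steps you flag but do not carry out. First, the ``geometric series evaluation'' that produced the closed antisymmetrizer expression and the recursion \eqref{eq:rec_Littlewood} in the proof of Theorem~\ref{thm:Robbins_Littlewood} does not transfer directly: in the fully inhomogeneous setting the summand carries the weight $\prod_{r\ge 0}\frac{(-s_r;q^{1/2})_{m_r(\lambda)}}{(q^{1/2};q^{1/2})_{m_r(\lambda)}}$ (and, for general $\gamma$, a different weight on $m_0(\lambda)$), which depends on the part multiplicities and is not of the product form $\prod_i c^{\lambda_i}$, while the factors $\frac{1-q}{1-s_{\lambda_i}\xi_{\lambda_i}u_i}\prod_{j=0}^{\lambda_i-1}\frac{\xi_j u_i-s_j}{1-\xi_j s_j u_i}$ are not geometric in $\lambda_i$; so the very first step of your ``elementary route'' is unestablished. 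Second, the inhomogeneous $q$-analogue of Lemma~\ref{lem:MatrixOperationEven} that would drive the row/column operations is not stated, let alone proved, and you yourself identify it as where the real work lies. Third, for the passage to general $\gamma$ you assert that the correction $(\gamma-1)f(u_i,u_j)$ can be absorbed by a single bordering row and column because it is ``near-rank-one''; but $f(u,v)$ as defined in the conjecture is a sum of several symmetric terms (involving $1+q\gamma uv$, $1-q^{1/2}uv$ and $u+v$) and is not visibly of rank one after removing the prefactor $\frac{(q^{1/2}-\gamma^{-1}s_0)(1-uv)}{(1+q^{1/2})(1-s_0u)(1-s_0v)}$, so the proposed Pfaffian bordering argument needs justification that is currently missing. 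Until these three ingredients are supplied, the statement remains what the paper says it is: a conjecture supported by finite checks.
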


This conjecture has also been checked for $n=1$, for $n=2$ if $p=0,1$, and for $n=3,4$ if $p=0$.

If we first set $s_0=0$ and then $\gamma=0$, the right-hand side of the identity in Conjecture~\ref{genconj} simplifies to 
\begin{equation*}
	\prod_{i=1}^{n} \frac{1+q^{\half} u_i}{1-u_i} \prod_{1 \le i < j \le n} \frac{1-q u_i u_j}{1- u_i u_j}.
\end{equation*}
Further setting $s_r=0$ for all $r$ leads to Kawanaka's identity \eqref{eq:Kawanaka}.

Finally, we present the following related Littlewood identity for fully inhomogeneous spin Hall--Littlewood polynomials, which we were unable to find in the literature. It was discovered in collaborative work with Moritz Gangl and its proof will appear in the above mentioned forthcoming paper related to Conjecture~\ref{genconj}.

\begin{thm} 
	Let $n$ be a positive integer and $p$ a non-negative integer. Setting  
	$\xi_j=1$ for all $j$ in $\F_{\lambda}(u_1,\ldots,u_n)$, we have 
	\[
	\sum_{\lambda_1 \ge \lambda_2 \ge \ldots \ge \lambda_n \ge 0} 
	\prod_{r \ge 0} \frac{(-s_r;q)_{m_r(\lambda)}}{(q;q)_{m_r(\lambda)}} \F_{\lambda}(u_1,\ldots,u_n)   =   
	\prod_{i=1}^{n} \frac{1}{1- u_i}
	\prod_{1 \le i < j \le n} \frac{1-q  u_i u_j}{1-u_i u_j}, 
	\]
	where $s_j=s$ for all $j \ge p$ and $u_i = \frac{s+x_i}{1 + s x_i}$ for all $i$, and both sides are considered as power series in $x_1,x_2,\ldots,x_n$.
\end{thm}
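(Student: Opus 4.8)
The plan is to prove the identity by the Izergin--Korepin technique, exactly in the spirit of the second proof of Theorem~\ref{thm:Robbins_Littlewood}: exhibit a short list of properties that determines the left-hand side uniquely and check that the right-hand side satisfies them. Write $\Phi(u_1,\dots,u_n)$ for the left-hand side. After the substitution $u_i=\frac{s+x_i}{1+sx_i}$ the sum over $\lambda$ converges as a power series in $x_1,\dots,x_n$, and both sides are rational functions of $u_1,\dots,u_n$ over the field of rational functions in $q,s_0,\dots,s_{p-1},s$, so it is enough to prove an identity of rational functions. Substituting the antisymmetrizer formula for $\F_\lambda$ with $\xi_j=1$ and interchanging the summation with $\asym$, one gets
\begin{equation*}
	\Phi(u_1,\dots,u_n)=\frac{1}{\prod_{1\le i<j\le n}(u_i-u_j)}\asym_{u_1,\dots,u_n}\left[\prod_{1\le i<j\le n}(u_i-qu_j)\,G(u_1,\dots,u_n)\right],
\end{equation*}
where $G(u_1,\dots,u_n)=\sum_{\lambda_1\ge\cdots\ge\lambda_n\ge0}\left(\prod_{r\ge0}\tfrac{(-s_r;q)_{m_r(\lambda)}}{(q;q)_{m_r(\lambda)}}\right)\prod_{i=1}^n\phi_{\lambda_i}(u_i)$ and $\phi_k(u)=\tfrac{1-q}{1-s_ku}\prod_{j=0}^{k-1}\tfrac{u-s_j}{1-s_ju}$. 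Symmetry of $\Phi$ is immediate (an antisymmetric function divided by the Vandermonde), and for $n=1$ one computes $\Phi(u_1)=\sum_{k\ge0}\tfrac{(-s_k;q)_1}{(q;q)_1}\phi_k(u_1)=\sum_{k\ge0}\tfrac{1+s_k}{1-s_ku_1}\prod_{j=0}^{k-1}\tfrac{u_1-s_j}{1-s_ju_1}$, which telescopes against $\tfrac{1}{1-u_1}$ via the relation $\tfrac{1+s_k}{1-s_ku_1}\prod_{j<k}\tfrac{u_1-s_j}{1-s_ju_1}=\tfrac{1}{1-u_1}\big(\prod_{j<k}\tfrac{u_1-s_j}{1-s_ju_1}-\prod_{j\le k}\tfrac{u_1-s_j}{1-s_ju_1}\big)$, so $\Phi(u_1)=\tfrac{1}{1-u_1}$.

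The core object is the normalized function $\widetilde\Phi(u_1,\dots,u_n):=\prod_{i=1}^n(1-u_i)\prod_{1\le i<j\le n}(1-u_iu_j)\,\Phi(u_1,\dots,u_n)$. For the right-hand side this equals the \emph{polynomial} $\prod_{1\le i<j\le n}(1-qu_iu_j)$, whose degree in $u_n$ is $n-1$. The plan is to show that $\widetilde\Phi$, computed from the left-hand side, is a polynomial in $u_n$ of degree at most $n-1$ (the only poles of $\Phi$ in $u_n$ are simple, at $u_n=1$ and at $u_n=u_i^{-1}$ for $i<n$, coming from the geometric resummation in $G$, and they are killed by the prefactor; the degree bound follows from the degree of $\prod_{i<j}(u_i-qu_j)$ together with a dominant-balance estimate), and that it satisfies the $n$ evaluations
\begin{equation*}
	\widetilde\Phi(u_1,\dots,u_n)\big|_{u_n=0}=\widetilde\Phi(u_1,\dots,u_{n-1}),
\end{equation*}
\begin{equation*}
	\widetilde\Phi(u_1,\dots,u_n)\big|_{u_n=u_i^{-1}}=(1-q)\prod_{\myatop{1\le j\le n-1}{j\ne i}}(1-qu_j/u_i)(1-qu_iu_j)\,\widetilde\Phi(u_1,\dots,\widehat{u_i},\dots,u_{n-1})\qquad(1\le i\le n-1).
\end{equation*}
Since a polynomial of degree at most $n-1$ is determined by its values at $n$ points, these properties pin down $\widetilde\Phi$. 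Verifying them for the right-hand side is a routine computation with $q$-Pochhammer symbols (and is much easier here than in Theorem~\ref{thm:Robbins_Littlewood}, as no Pfaffian is involved), after which an induction on $n$ finishes the proof.

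I expect the main obstacle to be establishing the two displayed evaluations for the left-hand side, in particular the one at $u_n=u_i^{-1}$: peeling off the variable $u_n$ amounts to isolating the smallest part $\lambda_n$, but the weight $\prod_{r\ge0}\frac{(-s_r;q)_{m_r(\lambda)}}{(q;q)_{m_r(\lambda)}}$ couples all parts through their multiplicities, so one cannot simply resum a geometric series. One has to split the sum according to $\lambda_n=0$ versus $\lambda_n\ge1$, and in the latter case track how lowering every part by one transforms each $m_r$; the ratio relation $\phi_k/\phi_{k-1}=\tfrac{u-s_{k-1}}{1-s_ku}$ combined with the comparison of $\tfrac{(-s_{r-1};q)_{m}}{(q;q)_{m}}$ with $\tfrac{(-s_r;q)_{m}}{(q;q)_{m}}$ is what makes the inhomogeneity $s_0$ drop out after one step, reducing the computation to the same statement with spectral data $(s_1,\dots,s_{p-1},s,s,\dots)$.

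An alternative, possibly smoother route is to deduce the identity from a known Cauchy identity for $\F_\lambda$ (for instance from \cite{P21}) by specializing the dual family of variables to a geometric-type infinite alphabet chosen so that the accompanying dual function collapses exactly to $\prod_{r\ge0}\frac{(-s_r;q)_{m_r(\lambda)}}{(q;q)_{m_r(\lambda)}}$ — the spin analogue of how the classical Littlewood identity falls out of the Cauchy identity. In that approach one would only need to check the convergence of the specialization and carry out the telescoping sum over the geometric alphabet, with the clean product on the right-hand side emerging directly from the product side of the Cauchy identity.
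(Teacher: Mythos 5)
The paper does not actually prove this statement: it is the final theorem of Section~\ref{sec:conjecture}, and the authors explicitly defer its proof to a forthcoming paper (joint with Gangl), offering only the remark that it generalizes (1.4) of \cite{Fis22} under the specialization $s_x=-q^{-1}$, $u_i=\frac{-1+qx_i}{q-x_i}$. So there is no in-paper proof to compare against, and your proposal must stand on its own. As a plan it is reasonable and closely modelled on the paper's second (Izergin--Korepin) proof of Theorem~\ref{thm:Robbins_Littlewood}; your $n=1$ telescoping computation is correct, and the claimed evaluations are at least consistent with the right-hand side. But what you have written is a strategy outline, not a proof: every step that carries the actual content is left unverified.

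Concretely, three gaps remain. First, the identity is a priori only an identity of formal power series in $x_1,\dots,x_n$; before you may evaluate at $u_n=u_i^{-1}$ (or even at $u_n=0$, i.e.\ $x_n=-s$) or speak of $\widetilde\Phi$ as a polynomial in $u_n$, you must prove that the left-hand side is a rational function of $u_1,\dots,u_n$. In the Robbins case this was immediate because the sum factorized into geometric series; here the weight $\prod_{r\ge0}\frac{(-s_r;q)_{m_r(\lambda)}}{(q;q)_{m_r(\lambda)}}$ couples the parts through their multiplicities, exactly the feature you acknowledge, so rationality itself needs an argument. Second, the degree bound ``$\deg_{u_n}\widetilde\Phi\le n-1$'' rests on a $u_n\to\infty$ asymptotic interchanged with an infinite sum that converges only formally in the $x_i$; your ``dominant-balance estimate'' is precisely the delicate point and is not supplied. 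Third, and most importantly, the two evaluation identities for the left-hand side (at $u_n=0$ and $u_n=u_i^{-1}$) are the whole substance of the theorem: they are where the inhomogeneities $s_0,\dots,s_{p-1},s$ must be processed, the $u_n=0$ reduction is not a standard branching rule for $\F_\lambda$ (the natural reduction point in this theory is $u_n=s$, i.e.\ $x_n=0$), and you yourself label this ``the main obstacle'' while only sketching a split according to $\lambda_n=0$ versus $\lambda_n\ge1$. The alternative route via a suitable specialization of a Cauchy identity for $\F_\lambda$ (e.g.\ from \cite{P21}) is plausible---the paper itself notes that the accompanying Cauchy-type identity follows from \cite{P21}---but it is likewise only gestured at, with the required dual specialization and its convergence unexamined. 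Until these points are filled in, the proposal should be regarded as a credible plan rather than a proof.
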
 

It is a generalization of (1.4) in \cite{Fis22}, which is obtained by setting $s_x=-q^{-1}$and $u_i=\frac{-1+q x_i}{q-x_i}$, and identifying $w=-q-1-q^{-1}$ in \cite[(1.4)]{Fis22}.

\section*{Acknowledgements}

We thank the anonymous referees for their helpful comments and for pointing out the relation to fully inhomogeneous spin Hall--Littlewood symmetric rational functions. We also thank Moritz Gangl for helpful discussions.
	
\bibliographystyle{alphaurl}
\bibliography{ASMLittlewood.bib}

\end{document}